\documentclass[final,hidelinks,onefignum,onetabnum]{siamart251216}

\usepackage{lipsum}
\usepackage{amsfonts}
\usepackage{graphicx}
\usepackage{epstopdf}
\usepackage{algorithmic}
\usepackage{bm}
\usepackage{subcaption}
\ifpdf
  \DeclareGraphicsExtensions{.eps,.pdf,.png,.jpg}
\else
  \DeclareGraphicsExtensions{.eps}
\fi

\newsiamremark{remark}{Remark}
\newsiamremark{hypothesis}{Hypothesis}
\crefname{hypothesis}{Hypothesis}{Hypotheses}
\newsiamthm{claim}{Claim}
\newsiamremark{fact}{Fact}
\crefname{fact}{Fact}{Facts}

\headers{Loss Landscape of Deep Matrix Factorization}{P. Chen, R. Jiang, and P. Wang}

\title{A Complete Loss Landscape Analysis of Regularized \\ Deep Matrix Factorization\thanks{The authors are listed alphabetically.
\funding{P. Chen and R. Jiang are supported in part by the National Key R\&D Program of China under grant 2023YFA1009300 and in part by the Major Program of NSFC 72394360 and 72394364. P. Wang is supported in part by the University of Macau SRG2025-00043-FST and UMDF-TISF-I/2026/013/FST, and in part by the Macau Science and Technology Development Fund (FDCT) 0091/2025/ITP2.}}}

\author{Po Chen\thanks{School of Data Science, Fudan University, Shanghai, China (\email{chenp24@m.fudan.edu.cn}).}
\and Rujun Jiang\thanks{School of Data Science, Fudan University, Shanghai, China (\email{rjjiang@fudan.edu.cn}).}
\and Peng Wang\thanks{Corresponding author. Department of Computer and Information Science, University of Macau, Macau SAR, China (\email{pengw@um.edu.mo}).}}

\usepackage{amsopn}

\def\bmw{\bm{W}}

\def\bU{\bm{U}}
\def\bV{\bm{V}}

\def\mO{\mathcal{O}}

\def\R{\mathbb{R}}

\newcommand{\blk}{\mathrm{BlkD}}

\newcommand{\retwo}[1]{{ #1}}

\ifpdf
\hypersetup{
  pdftitle={Loss Landscape of Deep Matrix Factorization},
  pdfauthor={P. Chen, R. Jiang, and P. Wang}
}
\fi

\begin{document}

\maketitle
\begin{abstract}
Despite its wide range of applications across various domains, the optimization foundations of deep matrix factorization (DMF) remain largely open. In this work, we aim to fill this gap by conducting a comprehensive study of the loss landscape of the regularized DMF problem. Toward this goal, we first provide a closed-form characterization of all critical points of the problem. Building on this, we establish precise conditions under which a critical point is a local minimizer, a global minimizer, a strict saddle point, or a non-strict saddle point. Leveraging these results, we derive a necessary and sufficient condition under which every critical point is either a local minimizer or a strict saddle point. This provides insights into why gradient-based methods almost always converge to a local minimizer of the regularized DMF problem. Finally, we conduct numerical experiments to visualize its loss landscape to support our theory.
\end{abstract}
\vspace{-0.1in}
\begin{keywords}
Deep matrix factorization, critical points, loss landscape, strict saddle point
\end{keywords}

\vspace{-0.1in}
\begin{MSCcodes}
90C26, 90C30, 15A23
\end{MSCcodes}

\section{Introduction}\label{sec:intro}

Deep matrix factorization (DMF) is a powerful framework for learning hierarchical representations of data and has found wide applications across various domains, such as recommendation systems \cite{he2017neural,xue2017deep}, computer vision \cite{trigeorgis2016deep}, and bioinformatics \cite{baptista2021deep}, to name a few. Typically, DMF extends classical matrix factorization (MF) by decomposing a target matrix into a product of more than two matrices. While the optimization foundations of classical MF have been extensively studied in the literature (see, e.g., \cite{li2019symmetry,zhu2018global}), the theoretical understanding of DMF remains relatively limited. To lay down the optimization foundations of DMF, we study the following regularized formulation:
\begin{align}\label{eq:F}
\min_{\bm W} F(\bm W) :=  \left\| \bm W_L\cdots\bm W_1 -  \bm Y \right\|_F^2 + \sum_{l=1}^L \lambda_l \|\bm W_l\|_F^2,
\end{align}
where $\bm Y \in \R^{d_L \times d_0}$ denotes the data input, $L \ge 2$ is the number of factors (i.e., layers), $\bm W_l \in \R^{d_l \times d_{l-1}}$ denotes the $l$-th weight matrix for each $l=1,\dots,L$, $\bm W = \{\bm W_l\}_{l=1}^L$ denotes the collection of all weight matrices, and $\lambda_l > 0$ for each $l=1,\dots,L$ are regularization parameters.

In modern non-convex optimization, analyzing the loss landscape plays a key role in deepening our understanding of optimization algorithms. In particular, many machine learning models, such as deep neural networks, often have abundant non-strict saddle points (see \Cref{def:crit}) \cite{achour2024loss}. These saddle points can trap first-order methods in suboptimal regions, limiting their effectiveness. Fortunately, a growing body of work has shown that, despite their non-convexity, many non-convex optimization problems arising in machine learning possess a {\em benign loss landscape} that satisfies the following properties: (i) there are no {\em spurious local minimizers}, i.e., all local minimizers are global minimizers and (ii) each saddle point of the objective function is a strict saddle point (see \Cref{def:crit}). Notable examples of such problems include phase retrieval \cite{sun2018geometric}, low-rank matrix recovery \cite{bhojanapalli2016global,ge2017no}, dictionary learning \cite{sun2015complete,sun2016complete}, neural collapse \cite{yaras2022neural,zhu2021geometric}, among others. Additionally, some non-convex problems exhibit a {\em partially benign} loss landscape, where only one of these two desirable properties holds. For example, it has been shown that group synchronization  \cite{ling2023solving,ling2025local,ling2019landscape,mcrae2024benign} and Tucker decomposition  \cite{frandsen2022optimization} problems have no spurious local minimizers. In another case, tensor decomposition \cite{ge2015escaping} and maximal coding reduction problems \cite{wang2024global} satisfy the strict saddle property, whose critical points only consist of local minimizers and strict saddle points. In this work, we contribute to this line of research by studying the loss landscape of the DMF problem \eqref{eq:F}, with the goal of determining whether it also exhibits a (partially) benign landscape.

In general, the DMF problem is highly complex and difficult to analyze due to its non-convex and hierarchical nature. Nevertheless, substantial progress has been made in recent years toward studying the loss landscape across different settings. A line of research has studied the classic MF problem, i.e., $L=2$. For example, the work \cite{li2019symmetry} studied a low-rank MF problem with symmetric structures and showed that each critical point is either a global minimizer or a strict saddle point. The work \cite{ge2017no} developed a unified framework to study the loss landscape of low-rank matrix problems, including matrix sensing, matrix completion, and robust PCA, and arrived at the same conclusion. In addition, the work \cite{zhou2022optimization} conducted a complete loss landscape analysis of Problem \eqref{eq:F} for the case $L=2$, demonstrating that any critical point that is not a global minimizer is a strict saddle point. {  Moreover, the work \cite{kunin2019loss} analyzed the loss landscape of regularized linear autoencoders  with depth $L=2$ and provided a complete characterization of the loss landscape.} However, all the above analyses are limited to two-layer factorization. Extending such analyses to deeper architectures (i.e., $L\ge 3$) remains a challenging yet important direction for fully understanding the optimization foundations of DMF.

Another independent line of research has focused on analyzing the loss landscape of the unregularized version of Problem \eqref{eq:F}.
For example, several works \cite{baldi1989neural,freeman2016topology,kawaguchi2016deep,lu2017depth,nouiehed2022learning,Trager2020Pure} studied the unregularized loss surface of deep linear networks and proved that each critical point is either a global minimizer or a saddle point under mild conditions.
Building on this, recent studies \cite{laurent2018deep,Trager2020Pure} extended the analysis to deep linear networks with arbitrary convex differentiable loss and showed that all local minima are global minima if hidden layers are as wide as the input and output layers.
Notably, the work \cite{achour2024loss} provided a complete analysis of the loss landscape of the unregularized version of Problem \eqref{eq:F}. Specifically, they identified conditions under which each critical point is a global minimizer, a strict saddle point, or a non-strict saddle point by conducting a second-order analysis.
Despite the inspiring progress, it remains underexplored in the literature to study the loss landscape of the regularized problem \eqref{eq:F}.

Beyond the aforementioned lines of research, other works have also contributed to the analysis of the loss landscape of deep linear networks. The work \cite{mehta2021loss} demonstrated that introducing regularization can lead to spurious local minima using computational algebraic geometry. The work \cite{ziyin2022exact} derived analytical expressions for the global minima of a regularized deep linear network and showed that the zero solution constitutes a spurious local minimum. A recent work \cite{wang2023implicit} studied deep linear networks for matrix completion and showed that this problem admits multiple spurious local minima of different ranks.
In parallel, many works have investigated the training dynamics of deep linear networks, focusing on implicit bias \cite{arora2019implicit,chou2024gradient} and the effects of initialization \cite{saxe2014exact}. Notably, recent works have extended the analysis of loss landscapes and training dynamics from linear networks to nonlinear networks; see, e.g., \cite{du2018gradient,liu2022spurious,nguyen2017loss,safran2022effective,sun2020global}.

\subsection{Our Contributions}
In this work, our goal is to completely characterize the loss landscape of the regularized DMF problem \eqref{eq:F}. Our contributions are multifold. First, we provide a closed-form characterization of all critical points of Problem \eqref{eq:F} (see \Cref{thm:crit}) despite its non-convexity and hierarchical structure. Specifically, we show that each critical point admits a singular value decomposition with shared singular values and matrices across layers. In contrast to prior works that rely on restrictive assumptions—such as requiring the data matrix $\bm{Y}$ to have distinct singular values or assuming that the input and output layers are narrower than the hidden layers \cite{achour2024loss,kawaguchi2016deep}—our result holds without imposing any conditions on the input data or the widths of the network layers.

Second, building on the above characterization, we provide a comprehensive loss landscape analysis of Problem \eqref{eq:F} by classifying all of its critical points. Specifically, we provide precise conditions under which a critical point of Problem \eqref{eq:F} is a local minimizer, a global minimizer, a strict saddle point, or a non-strict saddle point (see \Cref{thm:land} and \Cref{fig2:env}).  Compared to \cite{mehta2021loss,ziyin2022exact}, we not only show that the loss landscape of deep linear networks with regularization exhibits spurious local minima, but also provide precise conditions under which they occur. This result allows us to derive a necessary and sufficient condition on the regularization parameters $\{\lambda_l\}_{l=1}^L$ (see \eqref{eq:wellcase}) under which Problem \eqref{eq:F} has a partially benign landscape (see \Cref{coro:land 1}). More specifically, if this condition holds, each critical point of Problem \eqref{eq:F} is either a local minimizer or a strict saddle point; otherwise, Problem \eqref{eq:F} has a non-strict saddle point. This, together with the fact that gradient-based methods with random initialization almost always escape strict saddle points \cite{daneshmand2018escaping,lee2019first}, explains the convergence of gradient descent to a local minimizer for solving Problem \eqref{eq:F} when the regularization parameters satisfy \eqref{eq:wellcase}.
We also conduct numerical experiments to visualize the loss landscape of Problem \eqref{eq:F} in support of our theory.

\subsection{Notation and Definitions}

Given an integer $n$, we denote by $[n]$ the set $\{1,\dots,n\}$.
Given a vector $\bm{a}$, let $\|\bm{a}\|$ denote its Euclidean norm, $\|\bm a\|_0$ its $\ell_0$-norm, $a_i$ its $i$-th entry, and $\mathrm{diag}(\bm{a})$ the diagonal matrix with $\bm{a}$ as its diagonal. For a matrix $\bm{A}$, we define $\|\bm{A}\|$ as its spectral norm, $\|\bm{A}\|_F$ as its Frobenius norm, $a_{ij}$ or $\bm A(i,j)$ as its $(i,j)$-th entry, and $\sigma_i(\bm{A})$ as its $i$-th largest singular value. We denote by $\bm{0}_m$ the $m$-dimensional all-zero vector, by $\bm{0}_{m \times n}$ the $m \times n$ all-zero matrix, and simply by $\bm{0}$ when dimensions can be inferred from context.  We use $\mathcal{O}^{n \times d}$ to denote the set of all $n \times d$ matrices with orthonormal columns, $\mathcal{O}^{n}$ to denote the set of all $n \times n$ orthogonal matrices, and $\mathcal{P}^{n}$ to denote the set of all $n \times n$ permutation matrices.
We denote by $\mathrm{BlkD}(\bm X_1,\ldots,\bm X_n)$ the block-diagonal matrix with diagonal blocks $\bm X_1,\ldots,\bm X_n$ and conformal zero off-diagonal blocks. Blocks with zero rows or zero columns are allowed and are simply omitted from displayed block matrices. In particular, $\bm X_i\in\mathbb R^{m_i\times 0}$ contributes only $m_i$ rows and $\bm X_i\in\mathbb R^{0\times n_i}$ contributes only $n_i$ columns. We also use the convention $\bm A\bm B=\bm 0_{m\times n}$ for $\bm A\in\mathbb R^{m\times 0}$ and $\bm B\in\mathbb R^{0\times n}$.
Given matrices $\{\bm W_l\}_{l=1}^L$ with $\bm W_l \in \R^{d_l\times d_{l-1}}$ for each $l \in [L]$, let $\bm W_{i:1} := \bm W_i\bm W_{i-1}\cdots\bm W_1$ for each $i=2,\dots,L$ and $\bm W_{L:i} := \bm W_L\bm W_{L-1}\cdots \bm W_i$ for each $i \in [L-1]$. In particular, we define $\bm W_{0:1} := \bm I$ and $\bm W_{L:L+1} := \bm I$. For all $i \ge j+1$, let $\bm W_{i:j}=\bm W_{i}\bm W_{i-1}\cdots\bm W_j$ and  $\bm W_{i:j}=\bm W_i$ when $i=j$. In addition, we define $\prod_{l=L}^1 \bm W_l := \bm W_{L}\cdots\bm W_1$.

Let $\pi : [d] \rightarrow [d]$ denote a permutation of the elements in $[d]$. Note that there is a one-to-one correspondence between permutation matrix $\bm\Pi \in \mathcal{P}^{d}$ and a permutation $\pi$, i.e., $\bm \Pi(i,j) = 1$ if $j = \pi(i)$ and $\bm \Pi(i,j) = 0$ otherwise for each $i \in [d]$. Now, suppose that $\bm\Pi$ corresponds to $\pi$. This implies that $\bm \Pi^T$ corresponds to $\pi^{-1}$. Moreover, we simply write $\nabla_{\bm W_l} F(\bm W)$ by $\nabla_l F(\bm W)$ for each $l \in [L]$. We denote the Hessian matrix of $ F $ at $ \bm{W} $ by $ \nabla^2 F(\bm{W}) $ and  the bilinear form of Hessian along a direction $\bm D$ by
\begin{align}\label{eq:defhessian}
    \nabla^2 F(\bm W)[\bm D, \bm D] &:= \left\langle \bm D, \lim_{t\to 0} \frac{\nabla F(\bm W+ t\bm D) -\nabla F(\bm W)}{t} \right\rangle \notag \\
    & = 2\lim_{t\to 0}\frac{F(\bm W+t\bm D)-F(\bm W)-\langle\nabla F(\bm W),t\bm D\rangle}{t^2}.
\end{align}

Moreover, we formally introduce the definitions of critical points, local/global minimizers, and strict/non-strict saddle points for Problem \eqref{eq:F} as follows.
\begin{definition}\label{def:crit}
    For Problem \eqref{eq:F}, we say that \\
(i) $\bm W^*$ is a global minimizer if $F(\bm W^*) \le F(\bm W)$ for all $\bm W$. \\
(ii)  \( \bm{W}^* \) is a local minimizer (resp. maximizer) if there exists a neighborhood $\cal U$ of $\bm W^*$ such that $F(\bm W^*) \le F(\bm W)$ (resp. $F(\bm W^*) \ge F(\bm W)$) for all $\bm W \in \cal U$. \\
(iii) $\bm W^*$ is a (first-order) critical point if $\nabla F(\bm W^*) = \bm 0$. $\bm W^*$ is a second-order critical point if $\nabla F(\bm W^*)=\bm 0$ and $\nabla^2 F(\bm W^*)[\bm D, \bm D] \ge 0$  for all $\bm D$. \\
(iv)  $\bm W^*$ is a saddle point if it is a first-order critical point and is neither a local minimizer nor a local maximizer. \\
(v) Suppose that $\bm W^*$ is a saddle point. Then, $\bm W^*$ is a strict saddle point if there exists a direction $\bm D$ such that $\nabla^2F(\bm W^*)[\bm D, \bm D] < 0$.
Otherwise, $\bm W^*$ is a non-strict saddle point.
\end{definition}

The rest of our paper is organized as follows.  In \Cref{sec:main}, we present and discuss the main results concerning the loss landscape of Problem \eqref{eq:F}. In \Cref{sec:pf}, we provide the detailed proofs of the main results. Finally, we report experimental results in \Cref{sec:expe} and conclude the paper in \Cref{sec:conc}.

\section{Main Results}\label{sec:main}
Throughout the rest of the paper, let $ d_Y := \min\{d_0,d_L\}$ and $d_{\min} := \min \{d_0,d_1,\dots,d_L\}$. We focus on the non-scalar
case
\(
    \max_{0\le j\le L} d_j\ge2,
\)
since the fully scalar case \(d_0=\cdots=d_L=1\) can be handled separately
by a direct scalar argument. Clearly, \(d_{\min}\le d_Y\). For ease of exposition, we denote the critical point set of Problem \eqref{eq:F} by
\begin{align}\label{set:crit F}
 \mathcal{W}_F := \left\{ \bm W = (\bm W_1,\dots,\bm W_L): \nabla F(\bm W) = \bm 0 \right\}.
\end{align}

\subsection{Characterization of the Critical Point Set}\label{subsec:crit}

To characterize the critical point set $\mathcal{W}_F$ of Problem \eqref{eq:F}, we first introduce a singular value decomposition (SVD) of the data matrix $\bm Y \in \R^{d_L\times d_0}$. Specifically, let
$
    \bm Y = \bm U_Y\bm \Sigma_Y \bm V_Y^T
$
be an SVD of $\bm Y$, where $\bm U_Y \in \mathcal{O}^{d_L}$ and $\bm V_Y \in \mathcal{O}^{d_0}$. In addition,
\begin{align}\label{eq:Y}
\bm \Sigma_Y = \blk \left( \mathrm{diag}(y_1,\dots,y_{d_Y}),\bm 0_{(d_L-d_Y) \times (d_0-d_Y)}\right) \in \R^{d_L\times d_0},
\end{align}
where $y_1\ge y_2 \ge \dots \ge y_{d_Y} \ge 0$ are singular values. In the literature, it is common to assume that the singular values of $\bm Y$ are distinct to simplify the analysis; see, e.g., \cite{achour2024loss,kawaguchi2016deep}. However, this assumption is strict and does not hold in many practical scenarios. For example, when $\bm Y$ is a membership matrix in $K$-classification problems, it typically has $K$ repeated singular values. To address this issue, we introduce the following setup and notions. Let $r_Y$ denote the rank, i.e., $r_Y:=\mathrm{rank}(\bm Y)$, and $p_Y$ denote the number of distinct positive singular values of $\bm Y$. In other words, there exist indices $s_0, s_1, \dots, s_{p_Y}$ such that $0= s_{0} < s_{1} < \dots < s_{p_Y} = r_Y$ and
\vspace{-0.4\baselineskip}
\begin{subequations}\label{eq:SY}
\begin{align}
    & y_{s_{0}+1} = \dots = y_{s_{1}} > y_{s_{1}+1} = \dots = y_{s_{2}} > \dots > y_{s_{p_Y-1}+1} = \dots = y_{s_{p_Y}} > 0,  \\
    & y_{s_{p_Y}+1} = \cdots = y_{d_{Y}} = 0.
\end{align}
\end{subequations}
Then, let $h_i := s_i - s_{i-1}$ denote the multiplicity of the $i$-th largest positive singular value for each $i \in [p_Y]$. Consequently, we have $\sum_{i=1}^{p_Y} h_i = r_Y$.  Throughout this paper, we consistently use the above notations.  Now, we characterize the critical point set $\mathcal{W}_F$ of Problem \eqref{eq:F} as follows:
\begin{theorem}\label{thm:crit}
    It holds that $\bm W \in \mathcal{W}_F$ if and only if
    \begin{subequations}\label{eq:crit sol}
        \begin{align}
    & \bm W_1 = \bm Q_2\bm \Sigma_1 \mathrm{BlkD}\left(\bm \Pi, \bm I \right)\mathrm{BlkD}\left( \bm O_1,\dots,\bm O_{p_{Y}},\bm O_{p_{Y}+1} \right)\bm V_Y^T, \\
    & \bm W_l = \bm Q_{l+1} \bm \Sigma_l \bm Q_l^T,\ l=2,\dots,L-1,  \\
    & \bm W_L =  \bm U_Y\mathrm{BlkD}\left( \bm O_1^T,\dots,\bm O_{p_{Y}}^T,\widehat{\bm O}_{p_{Y}+1}^T \right)\mathrm{BlkD}\left(\bm \Pi^T, \bm I \right)\bm \Sigma_L \bm Q_L^T,
        \end{align}
    \end{subequations}
where (i) $\bm Q_l \in \mathcal{O}^{d_{l-1}}$ for each $l=2,\dots,L$, (ii) $\bm O_i \in \mathcal{O}^{h_i}$ for each $i \in [p_Y]$, $\bm O_{p_{Y}+1} \in \mathcal{O}^{d_0 - r_{Y}}$, and
$\widehat{\bm O}_{p_{Y}+1}  \in \mathcal{O}^{d_L - r_{Y}}$, and (iii) $\bm \Sigma_l = \mathrm{BlkD}\left(\mathrm{diag}( \bm \sigma)/\sqrt{\lambda_l}, \bm 0\right) \in \R^{d_l\times d_{l-1}}$ for each $l \in [L]$ with $(\bm \sigma, \bm \Pi) \in \R^{d_{\min}} \times \mathcal{P}^{d_Y}$ satisfying
\begin{align}\label{eq:thmsimga pi}
    \sigma_i^{2L-1} - \sqrt{\lambda} y_{\pi(i)}\sigma_i^{L-1} + \lambda \sigma_i = 0,\ \forall i \in [d_{\min}],\ \sigma_1 \ge \sigma_2 \ge \dots \ge \sigma_{d_{\min}} \ge 0.
\end{align}
Here, $\pi:[d_{Y}] \to [d_{Y}]$ is a permutation corresponding to $\bm \Pi \in \mathcal{P}^{d_Y}$ and $\lambda := \prod_{l=1}^L \lambda_l$.
\end{theorem}

We now make some remarks on this theorem. Despite the non-convexity and inherent hierarchical structure of Problem \eqref{eq:F}, we derive a closed-form characterization of its critical points. Specifically, each weight matrix at any critical point admits an SVD as given in \eqref{eq:crit sol}. Note that all weight matrices share the same singular values defined in \eqref{eq:thmsimga pi} up to scaling of the regularization parameters $\{\lambda_l\}_{l=1}^L$, and their left and right singular matrices are determined by the orthogonal matrices $\bm Q_l$ and $\bm O_i$. Here, $\bm Q_l$ for all $l$ are introduced to handle the rotational invariance in the sequential matrix multiplication of the weight matrices, while $\bm O_i$ for all $i$ are used to address the repeated singular values in $\bm Y$. This theorem serves as a cornerstone for characterizing the loss landscape of Problem \eqref{eq:F}.

To understand the properties of each critical point of Problem \eqref{eq:F}, we analyze the roots of the equation $f(x;y) = 0$ for different values of $y$ (see \Cref{subsec:function}), where
\begin{align}\label{eq:deffty}
f(x; y) := x^{2L - 1} - \sqrt{\lambda} y x^{L - 1} + \lambda x,\ \text{where}\ x, y\ge 0.
\end{align}
Notably, it has at most 3 nonnegative roots. We classify its non-negative roots into three cases: (i) the equation has only the zero root, (ii) the equation has exactly one positive root in addition to the zero root, and (iii) the equation has two distinct positive roots in addition to the zero root. For ease of exposition, we use $x(y)$ to denote a positive root of the equation $f(x;y) = 0$. If $f(x;y) = 0$ has two distinct positive roots, we denote the larger positive root of $f(x;y) = 0$ as $\overline{x}(y)$ and the smaller one as $\underline{x}(y)$. If $f(x;y) = 0$ has only one positive root, we denote it as $\hat{x}(y)$.\footnote{Repeated roots are considered as a single root.} Then, we classify the positive singular values of critical points of Problem \eqref{eq:F} as follows:
\begin{align}\label{eq:defsetgamma}
    \mathcal{S}_1 =  \bigcup_{i \in [d_Y]}  \left\{ \overline{x}(y_i)\right\},\quad
    \mathcal{S}_2 = \bigcup_{i \in [d_Y]} \left\{ \underline{x}(y_i) \right\},\quad
    \mathcal{S}_3 = \bigcup_{i \in [d_Y]} \left\{ \hat{x}(y_i) \right\}.
\end{align}
Throughout the rest of this paper, we consistently use the above notation to analyze the critical points of Problem \eqref{eq:F}.

\subsection{Characterization of the Loss Landscape}\label{subsec:loss}

\begin{figure}[t]
\centering
\includegraphics[width=0.8\textwidth, keepaspectratio]{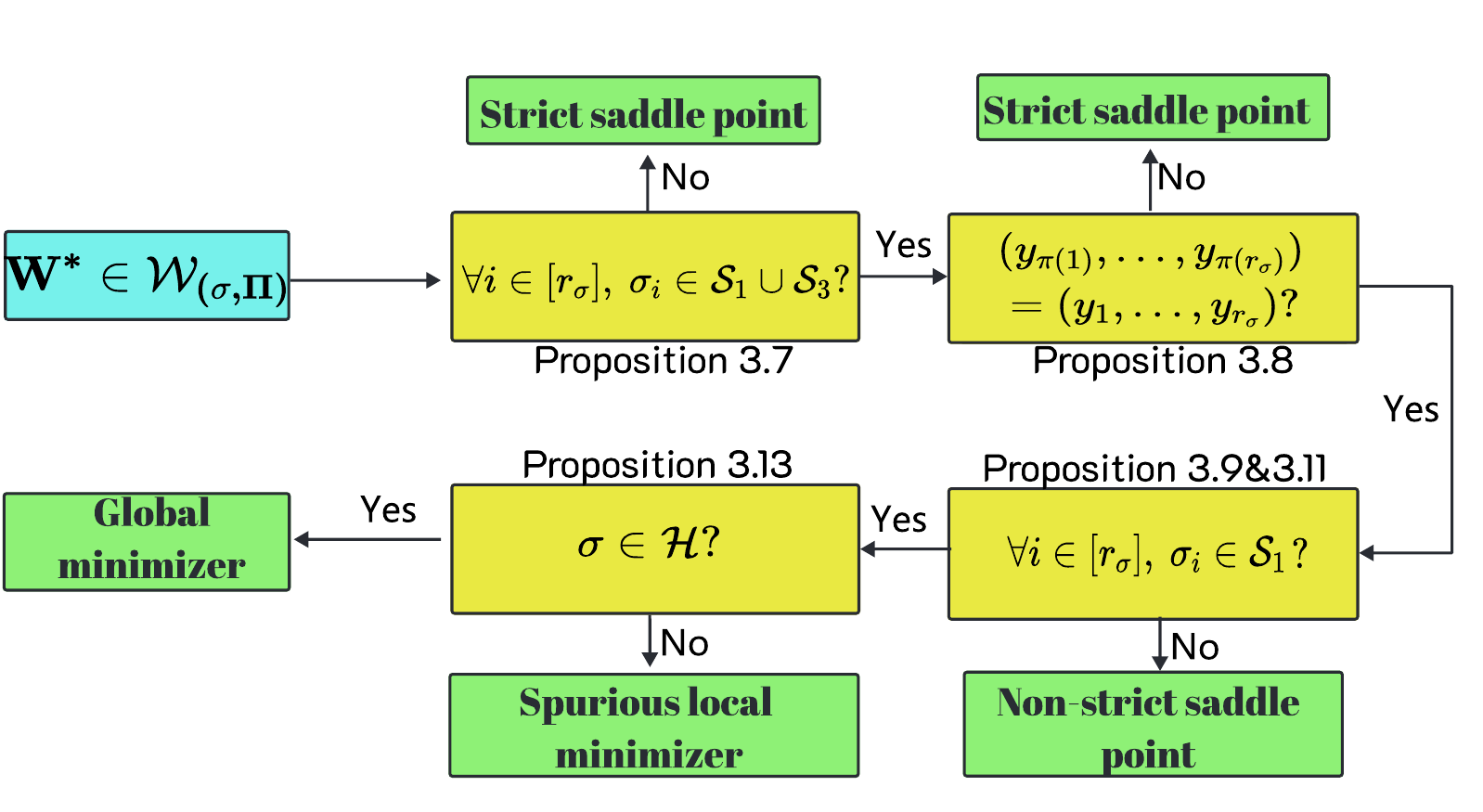}\vspace{-0.1in}
\caption{{\bf Flowchart for characterizing different types of critical points of Problem \eqref{eq:F} when $L \geq 3$}.}
\label{fig2:env}\vspace{-0.25in}
\end{figure}

Before we proceed, we should point out that the work \cite{zhou2022optimization} has shown that when $L=2$, each critical point of Problem \eqref{eq:F} is either a global minimizer or a strict saddle point (see \Cref{sec:app A}). However, when $L\ge 3$, Problem \eqref{eq:F} exhibits a more complex hierarchical structure, which requires a deeper theoretical analysis. Based on the characterization in \Cref{thm:crit}, we now classify all critical points of Problem \eqref{eq:F} into local minimizers, global minimizers, strict saddle points, and non-strict saddle points as follows.

\begin{theorem}\label{thm:land}
    Suppose that $L \ge 3$. Let \( \bm{W}=(\bmw_1,\ldots,\bmw_L)\) be a critical point of Problem \eqref{eq:F} determined by the pair $(\bm{\sigma}, \bm{\Pi}) \in \R^{d_{\min}} \times \mathcal{P}^{d_Y}$ as defined in \Cref{thm:crit}. In addition, let $r_{\sigma} := \|\bm \sigma\|_0$ and $\pi:[d_{Y}] \to [d_{Y}]$ be a permutation corresponding to $\bm \Pi$. Then, the following statements hold: \\
    (i) If there exists \( i \in [r_\sigma] \) such that \( \sigma_i \in \mathcal{S}_2 \) or \( (y_{\pi(1)}, \ldots, y_{\pi(r_\sigma)}) \neq (y_1, \ldots, y_{r_\sigma}) \), then \( \bm{W} \) is a strict saddle point.  \\
    (ii) If \(  \sigma_i \in \mathcal{S}_1 \) for all $ i \in [r_\sigma]$ and \( (y_{\pi(1)}, \ldots, y_{\pi(r_\sigma)}) = (y_1, \ldots, y_{r_\sigma}) \), then \( \bm{W} \) is a local minimizer. \\
    (iii) Suppose that the conditions in (ii) hold. Define $\lambda := \prod_{l=1}^L \lambda_l$ and
    \begin{align}\label{eq:defopt}
        \mathcal{H} := \left\{ \bm \sigma \in \R^{d_{\min}}: \sigma_i \in \mathrm{argmin}_{x \ge 0} (x^L-\sqrt{\lambda}y_i)^2 + \lambda L x^2,\ \forall i \in [d_{\min}] \right\}.
    \end{align}
    If \( \bm{\sigma} \in \mathcal{H} \), \( \bm{W} \) is a global minimizer;  otherwise, \( \bm{W} \) is a spurious local minimizer. \\
    (iv) If there exists \( i \in [r_\sigma] \) such that \( \sigma_i \in \mathcal{S}_3 \) and \( (y_{\pi(1)}, \ldots, y_{\pi(r_\sigma)}) = (y_1, \ldots, y_{r_\sigma}) \), \( \bm{W} \) is a non-strict saddle point.
\end{theorem}
Given any critical point of Problem \eqref{eq:F}, this theorem provides precise conditions under which it is a local minimizer, a global minimizer, a strict saddle point, or a non-strict saddle point. In particular, we illustrate the entire classification pipeline in \Cref{fig2:env}, which offers a complete classification of all critical points of Problem \eqref{eq:F}. We now discuss several implications of this theorem.

First, \Cref{thm:land} shows that Problem \eqref{eq:F} has non-strict saddle points under the condition in (iv).
The key difference from the condition in (ii) is that the condition in (iv) allows a singular value $\sigma_i \in \mathcal{S}_3$.
In \Cref{lem:illcase},
we show that this holds if and only if
\begin{align}\label{eq:illcase}
 \exists i \in [d_Y],\ \prod_{l=1}^L \lambda_l = y_i^{2(L-1)}\left( \left( \frac{L-2}{L} \right)^{\frac{L}{2(L-1)}} + \left(\frac{L}{L-2} \right)^{\frac{(L-2)}{2(L-1)}} \right)^{-2(L-1)}.
\end{align}
This, together with \Cref{thm:land}, yields the following result:
\begin{corollary}\label{coro:land 1}
Suppose $L \ge 3$.
If
\begin{align}\label{eq:wellcase}
\prod_{l=1}^L \lambda_l \neq y_i^{2(L-1)}
\left(
\left( \frac{L-2}{L} \right)^{\frac{L}{2(L-1)}} + \left(\frac{L}{L-2} \right)^{\frac{L-2}{2(L-1)}}
\right)^{-2(L-1)},\quad \forall i \in [d_{\min}],
\end{align}
every critical point of Problem \eqref{eq:F} is either a local minimizer or a strict saddle point; otherwise, there exists a non-strict saddle point.
\end{corollary}
Notably, this corollary establishes a necessary and sufficient condition under which all critical points of Problem~\eqref{eq:F} only consist of local minimizers and strict saddle points. This, together with the fact that gradient-based methods with random initialization almost always escape strict saddle points \cite{daneshmand2018escaping,lee2019first}, implies that gradient descent almost always converges to a local minimizer of Problem \eqref{eq:F} when the regularization parameters satisfy \eqref{eq:wellcase}. Moreover, the work \cite{chen2025error} has shown that under the condition  \eqref{eq:wellcase} and a mild condition on layer widths, Problem \eqref{eq:F} satisfies the error bound condition, which can be used to establish linear convergence of first-order methods \cite{Luo1993,zhou2017unified}. Combining all these results together yields that gradient descent with random initialization almost always converges to a local minimizer at a linear rate. On the other hand, if the regularization parameters $\{\lambda_l\}_{l=1}^L$ do not satisfy \eqref{eq:wellcase}, Problem \eqref{eq:F} possesses a non-strict saddle point. Consequently, gradient-based methods may converge to these non-strict saddle points rather than a local minimizer, which is usually not desired in practice \cite{achour2024loss}.

Second, \Cref{thm:land} demonstrates that Problem \eqref{eq:F} has spurious local minimizers when $L \ge 3$, i.e., not all local minimizers are global minimizers. This, together with the previous findings that no spurious local minima exist when $L=2$ (see, e.g., \cite{bhojanapalli2016global,ge2017no,li2019symmetry,zhou2022optimization}), reveals a fundamental difference in the loss landscape between shallow and deep models. Moreover, our result is consistent with recent findings (e.g., \cite{hepiecewise,liu2022spurious}), which show that deep neural networks with piecewise linear activations often exhibit spurious local minima.

Finally, Problem \eqref{eq:F} can be viewed as an instance of the regularized mean-squared loss for deep linear networks with an orthogonal input \cite{arora2019convergence,chen2025error}. The recent work \cite{achour2024loss} presented a comprehensive analysis of the loss landscape for deep linear networks. Here, we highlight the key differences between their results and ours. First, we focused on the regularized deep matrix factorization problem \eqref{eq:F}, while they focused on unregularized deep linear networks with square loss $\min_{\bm W} \left\| \bm W_L\cdots\bm W_1\bm X - \bm Y\right\|_F^2$. Notably, the product $\bm W_L\cdots\bm W_1$ in this problem can be viewed as a single matrix to simplify analysis. However, such a simplification is not possible for Problem \eqref{eq:F} due to the presence of explicit regularization terms applied to each layer. Second, the work \cite{achour2024loss} showed that all critical points of the unregularized problem consist of global minimizers, strict saddle points, and non-strict saddle points. In contrast, our result in \Cref{coro:land 1} demonstrates that as long as the regularization parameters satisfy \eqref{eq:wellcase}, Problem \eqref{eq:F} does not have any non-strict saddle points. To sum up, studying Problem \eqref{eq:F} not only facilitates our understanding of DMF but also offers better insights into the optimization properties of deep linear models with regularization. {  We should point out that \cite{achour2024loss} can handle non-orthogonal data thanks to the absence of regularization, while our analysis is limited to orthogonal input data.}

\section{Proof of Main Results}\label{sec:pf}

To facilitate our analysis, we introduce the following auxiliary problem associated with Problem \eqref{eq:F}:
\begin{align}\label{eq:G}
    \min_{\bm W}\ G(\bm W) := \left\|\bm W_L\cdots\bm W_1 -  \sqrt{\lambda}\bm Y \right\|_F^2 + \lambda \sum_{l=1}^L \|\bm W_l\|_F^2,
\end{align}
where $\lambda := \prod_{l=1}^L \lambda_l$. We compute the gradient of $G(\bm W)$ as follows:
\begin{align}\label{eq:grad G}
\nabla_{l} G(\bm W) = 2\bm W_{L:l+1}^T\left(\bm W_{L:1} - \sqrt{\lambda}\bm Y\right)\bm W_{l-1:1}^T + 2\lambda \bm W_l,\ \forall l \in [L].
\end{align}
The critical point set of Problem \eqref{eq:G} is defined as
\begin{align}\label{set:crit G}
\mathcal{W}_G := \left\{ \bm W = (\bm W_1,\dots,\bm W_L): \nabla  G(\bm W) = \bm 0 \right\}.
\end{align}
Now, we present the following lemma to establish the equivalence between Problems \eqref{eq:F} and \eqref{eq:G} in terms of their critical point sets and loss landscapes.

\begin{lemma}\label{lem:equi FG}
Consider Problems \eqref{eq:F} and \eqref{eq:G}. The following statements hold: \\
(i) $(\bm W_1,\dots,\bm W_L) \in \mathcal{W}_F$ if and only if $(\sqrt{\lambda_1}\bm W_1,\dots,\sqrt{\lambda_L}\bm W_L) \in \mathcal{W}_G$. \\
(ii) $(\bm W_1,\dots,\bm W_L)$ is a local minimizer/maximizer, global minimizer, strict saddle point, or non-strict saddle point of Problem \eqref{eq:F} if and only if $(\sqrt{\lambda_1}\bm{W}_1, \ldots, \sqrt{\lambda_L}\bm{W}_L)$ is the same type of critical point of Problem \eqref{eq:G}. \\
(iii) Let $\bm{Y} = \bm{U}_Y \bm{\Sigma}_Y \bm{V}_Y^T$ be an SVD of $\bm{Y}$, where $\bm U_Y \in \mathcal{O}^{d_L}$, $\bm \Sigma_Y \in \R^{d_L\times d_0}$, and $\bm V_Y \in \mathcal{O}^{d_0}$. It holds that $(\bm W_1,\bm W_2,\dots,\bm W_{L-1},\bm W_L)$ is a critical point of Problem \eqref{eq:G} if and only if  $(\bm W_1\bm V_Y,\bm W_2,\dots,\bm W_{L-1},\bm U_Y^T\bm W_L)$ is a critical point of the problem $\min_{\bm W}\ \left\|\bm W_L\cdots\bm W_1 -  \sqrt{\lambda}\bm \Sigma_Y \right\|_F^2 + \lambda \sum_{l=1}^L \|\bm W_l\|_F^2$.
\end{lemma}
\begin{proof}
(i) Let $\bm W\!\!:=\! (\bm W_1,\dots,\bm W_L)\!\in\! \mathcal{W}_F$ be arbitrary. We have
\begin{align}
\nabla_l F(\bm W) = 2\bm W_{L:l+1}^T\left(\bm W_{L:1} - \bm Y\right)\bm W_{l-1:1}^T + 2 \lambda_l \bm W_l,\ \forall l \in [L].
\notag
\end{align}
Now, let $\hat{\bm W} := (\sqrt{\lambda_1}\bm W_1,\dots,\sqrt{\lambda_L}\bm W_L)$. For each $l\in [L]$, we compute
\begin{align*}
\nabla_{l} G(\hat{\bm W}) &\overset{\eqref{eq:grad G}}{=}  2\sqrt{\lambda_L\cdots\lambda_{l+1}\lambda_{l-1}\cdots\lambda_1}{\bm W_{L:l+1}^T}(\sqrt{\lambda} \bm W_{L:1} - \sqrt{\lambda}\bm Y)\bm W_{l-1:1}^T   + 2\lambda\sqrt{\lambda_l} \bm W_l \\
&= \tfrac{\lambda}{\sqrt{\lambda_l}} \nabla_l F(\bm W)   .
\end{align*}
Therefore, $\nabla_l F(\bm W)=\bm 0$ if and only if $\nabla_{l} G(\hat{\bm W}) =\bm 0$. This yields the desired result.

(ii) Let $\varphi(\bm W) := (\sqrt{\lambda_1}\bm W_1,\dots,\sqrt{\lambda_L}\bm W_L)$. Note that $G(\varphi(\bm{W})) = \lambda F(\bm{W})$ and that $\varphi(\bm{W})$ is a smooth bijection in $\bm{W}$. By \Cref{def:crit} and \Cref{lem:boumalcite}, this yields the desired result.

(iii)
Combining the SVD of $\bm Y$ and \eqref{eq:grad G} directly yields the desired result.
\end{proof}
As established above, the characterization of the critical point set and loss landscape of Problem~\eqref{eq:F} is equivalent to that of Problem~\eqref{eq:G}. By \Cref{lem:equi FG}(iii), we assume without loss of generality that $\bm{Y}= \bm{\Sigma}_Y$ throughout the rest of the paper, i.e., where $\bm{\Sigma}_Y$ is defined in \eqref{eq:Y}.

\subsection{Analysis of the Critical Point Set}\label{subsec:pre}

In this subsection, we analyze the critical point set $\mathcal{W}_G$ of Problem \eqref{eq:G}.
To begin, we present a lemma to show that the weight matrices $\{\bm W_l\}_{l=1}^L$ at any critical point of Problem \eqref{eq:G} are balanced.
\begin{lemma}\label{lem:bala}
Let $(\bm W_1,\dots,\bm W_L)$ be a critical point of Problem \eqref{eq:G}. We have
\begin{align}
& \bm W_l\bm W_l^T = \bm W_{l+1}^T\bm W_{l+1},\ \forall l \in [L-1],\quad \text{and}\ \label{eq:bala} \\
& (\bm W_l \bm W_l^T)^{L-1} \bm W_l - \sqrt{\lambda}\bm W_{L:l+1}^T \bm Y \bm W_{l-1:1}^T + \lambda \bm W_l=0, \ \forall l \in [L].\label{eq:crit}
\end{align}
\end{lemma}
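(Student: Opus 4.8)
The plan is to obtain both identities directly from the first-order stationarity condition $\nabla_l G(\bm W) = \bm 0$, which by \eqref{eq:grad G} reads $\bm W_{L:l+1}^T(\bm W_{L:1} - \sqrt{\lambda}\bm Y)\bm W_{l-1:1}^T + \lambda\bm W_l = \bm 0$ for every $l \in [L]$.

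\textbf{Part (i).} I would multiply these stationarity equations on one side by a weight matrix. Right-multiplying the $l$-th equation by $\bm W_l^T$ and using $\bm W_{l-1:1}^T\bm W_l^T = \bm W_{l:1}^T$ gives $\bm W_{L:l+1}^T(\bm W_{L:1} - \sqrt{\lambda}\bm Y)\bm W_{l:1}^T + \lambda\bm W_l\bm W_l^T = \bm 0$; left-multiplying the $(l+1)$-th equation by $\bm W_{l+1}^T$ and using $\bm W_{l+1}^T\bm W_{L:l+2}^T = \bm W_{L:l+1}^T$ gives $\bm W_{L:l+1}^T(\bm W_{L:1} - \sqrt{\lambda}\bm Y)\bm W_{l:1}^T + \lambda\bm W_{l+1}^T\bm W_{l+1} = \bm 0$. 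Subtracting the two cancels the common term, and since $\lambda = \prod_{l=1}^L \lambda_l > 0$ this yields \eqref{eq:bala}. The endpoint $l+1 = L$ causes no trouble because $\bm W_{L:L+1} = \bm I$.

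\textbf{Part (ii).} The idea is to use the balancedness from part (i) to collapse the ``environment'' factor appearing in $\nabla_l G = \bm 0$. Writing $\bm W_{L:1} = \bm W_{L:l+1}\bm W_l\bm W_{l-1:1}$, the stationarity condition becomes $\bm W_{L:l+1}^T\bm W_{L:l+1}\,\bm W_l\,\bm W_{l-1:1}\bm W_{l-1:1}^T - \sqrt{\lambda}\bm W_{L:l+1}^T\bm Y\bm W_{l-1:1}^T + \lambda\bm W_l = \bm 0$. I would then prove, by induction on the chain length with repeated use of \eqref{eq:bala}, the two telescoping identities $\bm W_{l-1:1}\bm W_{l-1:1}^T = (\bm W_l^T\bm W_l)^{l-1}$ and $\bm W_{L:l+1}^T\bm W_{L:l+1} = (\bm W_l\bm W_l^T)^{L-l}$, valid for all $l \in [L]$ (with the conventions $\bm W_{0:1} = \bm W_{L:L+1} = \bm I$ covering $l = 1$ and $l = L$). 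Substituting these in and using the commutation $(\bm W_l\bm W_l^T)^m\bm W_l = \bm W_l(\bm W_l^T\bm W_l)^m$, the environment factor becomes $(\bm W_l\bm W_l^T)^{L-l}\bm W_l(\bm W_l^T\bm W_l)^{l-1} = \bm W_l(\bm W_l^T\bm W_l)^{L-1} = (\bm W_l\bm W_l^T)^{L-1}\bm W_l$, and plugging this back into the stationarity condition gives exactly \eqref{eq:crit}.

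\textbf{Main obstacle.} The difficulty is bookkeeping rather than conceptual: in the two inductions one must check that every instance of \eqref{eq:bala} invoked lies in the valid range $l \in [L-1]$, and that the degenerate endpoints (the empty products $\bm W_{0:1}$ and $\bm W_{L:L+1}$ equal to the identity) correctly produce the claimed exponent $0$. Once the telescoping identities are in place, part (ii) reduces to a short substitution.
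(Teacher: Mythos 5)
Your proposal is correct and follows essentially the same route as the paper: part (i) is exactly the paper's argument of forming $\nabla_l G(\bm W)\bm W_l^T - \bm W_{l+1}^T\nabla_{l+1}G(\bm W) = \bm 0$ so that the data-dependent terms cancel, and part (ii) is the paper's recursive use of \eqref{eq:bala} to collapse $\bm W_{L:l+1}^T\bm W_{L:1}\bm W_{l-1:1}^T$ into $(\bm W_l\bm W_l^T)^{L-1}\bm W_l$, merely organized as two explicit telescoping identities proved by induction. The extra bookkeeping is sound (all invocations of \eqref{eq:bala} stay in range, and the endpoint conventions give exponent $0$), so no changes are needed.
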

\begin{proof}
According to $\nabla_l G(\bm W) = \bm 0$ for each $l \in [L]$, we have $\nabla_l G(\bm W)\bm W_l^T - \bm W_{l+1}^T \nabla_{l+1} G(\bm W)= \bm 0$ for all $l \in [L-1]$. This, together with \eqref{eq:grad G}, implies \eqref{eq:bala}.
Recursively using \eqref{eq:bala}, we have
\begin{align*}
& \bm W_{L:l+1}^T \bm W_{L:1}\bm W_{l-1:1}^T  = \bm W_{l+1}^T\cdots\bm W_{L-1}^T\bm W_L^T\bm W_L\bm W_{L-1}\cdots\bm W_2\bm W_1\bm W_1^T\bm W_2^T\cdots\bm W_{l-1}^T \\
= &\ \ \bm W_{l+1}^T\cdots\left(\bm W_{L-1}^T\bm W_{L-1}\right)^2\cdots\left(\bm W_2\bm W_2^T\right)^2 \cdots\bm W_{l-1}^T = (\bm W_l \bm W_l^T)^{L-1} \bm W_l.
\end{align*}
Substituting this into \eqref{eq:grad G}, together with $\nabla_l G(\bm W) = \bm 0$, yields \eqref{eq:crit}.
\end{proof}

With the above setup, we present a closed-form characterization of $\mathcal{W}_G$ as follows:

\begin{proposition}\label{prop:opti G}
Suppose that $\bm Y \in \R^{d_L\times d_0}$ takes the form of \eqref{eq:Y}. The critical point set of Problem \eqref{eq:G} can be expressed as
\begin{equation*}
    \mathcal{W}_G = \bigcup_{(\bm{\sigma}, \bm{\Pi}) \in \mathcal{B}} \mathcal{W}_{(\bm{\sigma}, \bm{\Pi})},
\end{equation*}
where $\mathcal{W}_{(\bm\sigma,\bm\Pi)}$ takes the form of
\begin{align}\label{set:sigmapi}
& \left\{\bm W :
\begin{array}{l}
\bm \Sigma_l = \mathrm{BlkD}\left(\mathrm{diag}( \bm \sigma), \bm 0\right) \in \R^{d_l\times d_{l-1}},\ \forall l \in [L],\\
\bm W_1 = \bm Q_2\bm \Sigma_1 \mathrm{BlkD}\left(\bm \Pi, \bm I  \right)\mathrm{BlkD}\left( \bm O_1,\dots,\bm O_{p_{Y}},\bm O_{p_{Y}+1} \right), \\
\bm W_l = \bm Q_{l+1} \bm \Sigma_l \bm Q_l^T,\ l=2,\dots,L-1,\ \bm Q_l \in \mathcal{O}^{d_{l-1}},\ l=2,\dots,L,  \\
\bm W_L =  \mathrm{BlkD}\left( \bm O_1^T,\dots,\bm O_{p_{Y}}^T,\widehat{\bm O}_{p_{Y}+1}^T \right)\mathrm{BlkD}\left(\bm \Pi^T, \bm I \right)\bm \Sigma_L \bm Q_L^T, \\
\bm O_i \in \mathcal{O}^{h_i},\ \forall i \in [p_{Y}],\  \bm O_{p_{Y}+1} \in \mathcal{O}^{d_0 - r_{Y}},\ \widehat{\bm O}_{p_{Y}+1}  \in \mathcal{O}^{d_L - r_{Y}},
\end{array}
\right\}
\end{align}
and
\begin{align*}
\mathcal{A} & := \left\{ \bm a \in \R^{d_{Y}}: \|\bm a\|_0 \le d_{\min},\ a_i^{2L-1} - \sqrt{\lambda}y_ia_i^{L-1} + \lambda a_i = 0,\ a_i \geq 0,\ \forall i \in [d_{Y}] \right\}, \notag \\
\mathcal{B} & := \left\{ (\bm \sigma, \bm \Pi)  \in \R^{d_{\min}} \times \mathcal{P}^{d_{Y}}:
\bm a \in \mathcal{A},\ \left( \bm \sigma, \bm 0_{d_Y-d_{\min}} \right) =\bm \Pi \bm a,\ \sigma_1 \geq \dots \geq \sigma_{d_{\min}}  \right\}. \notag
\end{align*}
\end{proposition}
\begin{proof}
Recall that
\begin{align}\label{eq:Y1}
\bm Y = \mathrm{BlkD}\left(\tilde{\bm \Sigma}_Y,\bm 0_{(d_L-d_Y) \times (d_0-d_Y)}\right),\ \text{where}\ \tilde{\bm \Sigma}_Y := \mathrm{diag}(y_1,\dots,y_{d_{Y}}).
\end{align}
Let $(\bm \sigma, \bm \Pi)\in \mathcal{B}$ and $\bm W \in \mathcal{W}_{(\bm \sigma,\bm \Pi)}$ be arbitrary. We first verify that $\nabla G(\bm W) = \bm 0$. For any $\bm a \in \cal A$, it follows from $\tilde{\bm \Sigma}_Y = \mathrm{diag}(y_1,\dots,y_{d_{Y}})$ that
\begin{align}\label{eq0:prop opti G}
     \mathrm{diag}^{2L-1}(\bm a) - \sqrt{\lambda} \mathrm{diag}^{L-1}(\bm a) \tilde{\bm \Sigma}_{Y} + \lambda \mathrm{diag}(\bm a)  = \bm 0.
\end{align}
Since $(\bm \sigma, \bm \Pi) \in \cal B$, there exists $\bm a\in\mathcal A$ such that
$$\bm \Pi^T\blk\left(\mathrm{diag}(\bm \sigma), \bm 0_{(d_Y-d_{\min})\times (d_Y-d_{\min})}\right) \bm \Pi = \mathrm{diag}(\bm a).$$ Substituting this into \eqref{eq0:prop opti G} and multiplying both sides by $\bm \Pi$ and $\bm \Pi^T$ yield
\begin{align}\label{eq1:prop opti G}
\begin{bmatrix}
    \mathrm{diag}^{2L-1}(\bm \sigma) & \bm 0 \\
    \bm 0 & \bm 0
\end{bmatrix} - \sqrt{\lambda}\begin{bmatrix}
    \mathrm{diag}^{L-1}(\bm \sigma) & \bm 0 \\
    \bm 0 & \bm 0
\end{bmatrix} \bm \Pi \tilde{\bm \Sigma}_{Y}\bm \Pi^T + \lambda \begin{bmatrix}
    \mathrm{diag}(\bm \sigma) & \bm 0 \\
    \bm 0 & \bm 0
\end{bmatrix} = \bm 0.
\end{align}
For each $l =2,\dots,L-1$, substituting the form of $\{\bm W_l\}_{l=1}^{L}$ in \eqref{set:sigmapi} into \eqref{eq:grad G} yields
\begin{align*}
\frac{1}{2}\nabla_{l}G(\bm W)
& = \bm Q_{l+1}\left( \bm \Sigma_l^T\bm \Sigma_l \right)^{L-1}\bm \Sigma_l\bm Q_l^T + \lambda \bm Q_{l+1} \bm \Sigma_l \bm Q_l^T - \sqrt{\lambda}\bm Q_{l+1} \\
&\quad   \left(\prod_{j=l+1}^L \bm \Sigma^T_j\right) \mathrm{BlkD}\left(\bm
 \Pi, \bm I_{d_L-d_{Y}}\right)\bm Y \mathrm{BlkD} \left(\bm
 \Pi^T, \bm I_{d_0-d_{Y}}\right)\left(\prod_{j=1}^{l-1} \bm \Sigma^T_j\right) \bm Q_{l}^T \\
& = \bm Q_{l+1}  \left( \begin{bmatrix}
    \mathrm{diag}^{2L-1}(\bm \sigma) & \bm 0 \\
    \bm 0 & \bm 0
\end{bmatrix} - \sqrt{\lambda}\begin{bmatrix}
    \mathrm{BlkD}\left(\mathrm{diag}^{L-1}(\bm \sigma),\bm 0\right)\bm \Pi \tilde{\bm \Sigma}_{Y}\bm \Pi^T & \bm 0 \\
    \bm 0 & \bm 0
\end{bmatrix}  \right. \\
 &\quad \left. +\ \lambda \begin{bmatrix}
    \mathrm{diag}(\bm \sigma) & \bm 0 \\
    \bm 0 & \bm 0
\end{bmatrix} \right) \bm Q_l^T \overset{\eqref{eq1:prop opti G}}{=} \bm 0,
  \end{align*}
where the second equality follows from that $\bm \Sigma_l = \mathrm{BlkD}\left(\mathrm{diag}(\bm \sigma), \bm 0\right)$ for all $l\in [L]$ and $\bm \Pi \tilde{\bm \Sigma}_{Y}\bm \Pi^T$ are diagonal matrices.
{
For the case $l=1$,  substituting the form of $\{\bm W_l\}_{l=1}^{L}$ in \eqref{set:sigmapi} into \eqref{eq:grad G} yields
{\small\begin{align*}
     \frac{1}{2}\nabla_{1} G(\bm W) &=  \bm Q_2 \left( (\bm \Sigma_1\bm \Sigma_1^T)^{L-1}\bm \Sigma_1 + \lambda \bm \Sigma_1 \right)\mathrm{BlkD}\left(\bm \Pi, \bm I_{d_0-d_{Y}} \right)\mathrm{BlkD}\left( \bm O_1,\dots,\bm O_{p_Y+1} \right)\\
    &\quad- \sqrt{\lambda} \bm Q_2 \left(\prod_{l=2}^{L}\bm \Sigma_l\right) \mathrm{BlkD}\left(\bm \Pi, \bm I_{d_L - d_{Y}} \right)\mathrm{BlkD}\left( \bm O_1,\dots,\bm O_{p_Y}, \widehat{\bm O}_{p_Y+1} \right)\bm Y \\
    & = \bm Q_2 \left( (\bm \Sigma_1\bm \Sigma_1^T)^{L-1}\bm \Sigma_1 + \lambda \bm \Sigma_1 - \sqrt{\lambda} \left(\prod_{l=2}^{L}\bm \Sigma_l\right) \mathrm{BlkD}\left(\bm \Pi, \bm I \right)  \bm Y \mathrm{BlkD}\left(\bm \Pi^T, \bm I  \right)\right) \\
    &\quad \mathrm{BlkD}\left(\bm \Pi, \bm I \right) \mathrm{BlkD}\left( \bm O_1,\dots,\bm O_{p_Y},\bm O_{p_Y+1} \right) = \bm 0,
\end{align*} }
where the second equality follows from the block structure of \( \bm Y \) in \eqref{eq:Y1} and thus
\[
\mathrm{BlkD}( \bm O_1, \dots, \bm O_{p_Y}, \widehat{\bm O}_{p_Y+1} )\bm Y
= \bm Y\, \mathrm{BlkD}\left( \bm O_1, \dots, \bm O_{p_Y}, \bm O_{p_Y+1} \right),
\]
and the last equality follows from  \( \bm \Sigma_1 = \mathrm{BlkD}\left( \mathrm{diag}(\bm \sigma), \bm 0 \right) \) and \eqref{eq1:prop opti G}. For the case $l=L$, substituting the form of $\{\bm W_l\}_{l=1}^{L}$ in \eqref{set:sigmapi} into \eqref{eq:grad G} yields
{\small\begin{align*}
    \frac{1}{2}\nabla_L G(\bmw)
    &=\mathrm{BlkD}\left( \bm O_1^T,\dots,\bm O_{p_{Y}}^T,\widehat{\bm O}_{p_{Y}+1}^T \right)\mathrm{BlkD}\left(\bm \Pi^T, \bm I \right)((\bm \Sigma_L\bm \Sigma_L^T)^{L-1}\bm \Sigma_L+\lambda\bm \Sigma_L)\bm Q^{T}_L\\
    &\quad-\sqrt{\lambda}\bm Y\mathrm{BlkD}\left( \bm O_1^T,\dots,\bm O_{p_{Y}}^T,{\bm O}_{p_{Y}+1}^T \right)\mathrm{BlkD}\left(\bm \Pi^T, \bm I \right)\bm \Sigma_{L-1:1}^T \bm Q_L^T\\
    &=\mathrm{BlkD}\left( \bm O_1^T,\dots,\bm O_{p_{Y}}^T,\widehat{\bm O}_{p_{Y}+1}^T \right)\mathrm{BlkD}\left(\bm \Pi^T, \bm I \right)
    \big( (\bm \Sigma_L\bm \Sigma_L^T)^{L-1}\bm \Sigma_L+\lambda\bm \Sigma_L   \\
 &\quad -\left. \sqrt{\lambda}\blk\left(\bm \Pi \tilde{\bm \Sigma}_{Y}\bm \Pi^T\mathrm{diag}^{L-1}(\bm \sigma),\bm 0\right)\right)\bm Q_L^T= \bm 0,
\end{align*}}
where the second equality follows from the block structure $\bm{Y}$ in \eqref{eq:Y1} and
\begin{align*}
    &\quad \bm Y \mathrm{BlkD}\left( \bm O_1^T,\dots,\bm O_{p_{Y}}^T,{\bm O}_{p_{Y}+1}^T \right)\mathrm{BlkD}\left(\bm \Pi^T, \bm I \right)\\
    &=\mathrm{BlkD}\left( \bm O_1^T,\dots,\bm O_{p_{Y}}^T,\widehat{\bm O}_{p_{Y}+1}^T \right)\mathrm{BlkD}\left(\bm \Pi^T, \bm I \right)
    \blk(\bm \Pi \tilde{\bm \Sigma}_{Y}\bm \Pi^T , \bm 0).
\end{align*}
Consequently, we conclude that $(\bm W_1,\dots,\bm W_L)$ is a critical point of Problem \eqref{eq:G}.}

Conversely, suppose that $\bm W$ is a critical point, i.e., $\nabla_l G(\bm W) = \bm 0$ for all $l \in [L]$. According to \Cref{lem:bala}, we obtain that $\bm W_l$ for all $l \in [L]$ share the same rank denoted by $r$, which satisfies $r \leq d_{\min}$. For each $l \in [L]$, let
\begin{equation}\label{eq:svdwl}
    \bm W_l = \bm U_l \bm \Sigma_l \bm V_l^T
\end{equation}
be an SVD of $\bm W_l$, where $\bm U_l \in \mathcal{O}^{d_l}$, $\bm V_l \in\mathcal{O}^ {d_{l-1}}$, and $\bm \Sigma_l = \mathrm{BlkD}(\tilde{\bm \Sigma}_l, \bm 0)\in \mathbb{R}^{d_l \times d_{l-1}}$ with $\tilde{\bm \Sigma}_l \in \R^{r\times r}$ being a diagonal matrix with positive diagonal entries. This, together with \eqref{eq:bala}, yields
\begin{align}\label{eq1:bala}
 \bm U_l\bm{\Sigma}_l\bm{\Sigma}_l^T\bm U_l^T = \bm V_{l+1} \bm{\Sigma}_{l+1}^T \bm{\Sigma}_{l+1}\bm V_{l+1}^T,\ \forall l \in [L-1].
\end{align}
Since the above both sides are eigenvalue decompositions of the same matrix, with eigenvalues in decreasing order, we have $ \bm{\Sigma}_l\bm{\Sigma}_l^T =  \bm{\Sigma}_{l+1}^T \bm{\Sigma}_{l+1}$ for each $l \in [L-1]$. This implies that $\bm W_1,\dots,\bm W_L$ have the same positive singular values. Next, let $\{\sigma_i\}_{i=1}^r$ denote the positive singular values of $\bm W_l$ for each $l \in [L]$ and $p$ denote the number of distinct elements of positive singular values. In other words, there exist indices $\hat{s}_0, \hat{s}_1,\dots, \hat{s}_p$ such that $0 = \hat{s}_0 < \hat{s}_1 <\dots < \hat{s}_p=r$ and
\begin{align*}
\sigma_{\hat{s}_0+1} = \dots = \sigma_{\hat{s}_1} > \sigma_{\hat{s}_1+1} = \dots = \sigma_{\hat{s}_2} > \dots > \sigma_{\hat{s}_{p-1}+1} = \dots = \sigma_{\hat{s}_p} > 0.
\end{align*}
Let $\hat{h}_i := \hat{s}_i - \hat{s}_{i-1}$ be the multiplicity of the $i$-th largest positive value  for each $i \in [p]$. With an abuse of notation, we define $\hat{h}_{p+1} := d_{l} - r$ for $l\in [L-1]$.\footnote{More precisely, the last block size depends on the layer and can be written as $\hat h_{p+1,l}:=d_l-r$. With a slight abuse of notation, we omit the layer index $l$ when it is clear from context in this proof.} Then, we have
\begin{align}\label{eq3:bala}
    \tilde{\bm \Sigma}_l = \tilde{\bm \Sigma} := \mathrm{BlkD}\left( \sigma_{\hat{s}_1}  \bm I_{\hat{h}_1},\dots, \sigma_{\hat{s}_p} \bm I_{\hat{h}_p}\right)  \in \R^{r \times r}.
\end{align}
Based on the above block form, we write $\bm U_l$ and $\bm V_l$ in \eqref{eq:svdwl} for each $l \in [L]$ as
\begin{align}\label{eq:Ul Vl}
\bU_{l} =  \left[\bm{U}_{l}^{(1)},\dots,\bm{U}_{l}^{(p)},\bm{U}_{l}^{(p+1)}\right],\ \bV_{l} =  \left[\bm{V}_{l}^{(1)},\dots,\bm{V}_{l}^{(p)},\bm{V}_{l}^{(p+1)}\right],
\end{align}
where $\bm{U}_l^{(i)} \in \mO^{d_l \times \hat{h}_i}$ and $\bm{V}_l^{(i)} \in \mO^{d_{l-1} \times \hat{h}_i}$ for all $i \in [p]$, $\bm{U}_l^{(p+1)} \in \mO^{d_l \times (d_{l}- r)}$, and $\bm{V}_l^{(p+1)} \in \mO^{d_{l-1} \times (d_{l-1}- r)}$. This, together with \eqref{eq1:bala}, \eqref{eq3:bala}, and \cite[Lemma 8(i)]{wang2023understanding}, implies that there exists orthogonal matrix $\bm Q_{l}^{(i)} \in \mO^{\hat{h}_i}$ such that
\begin{equation}\label{eq:u1v1}
\bm{U}_{l}^{(i)} = \bm{V}_{l+1}^{(i)}\bm Q_{l}^{(i)},\ \forall l \in [L-1],\ i \in [p+1].
\end{equation}
Using \eqref{eq:svdwl},
we compute
{
\begin{align}\label{eq1:thm opti G}
\bm W_{L:1}  =\quad ~ &\bm U_L \bm \Sigma_L (\bm V_L^T\bm U_{L-1})\bm \Sigma_{L-1}(\bm V_{L-1}^T\bm U_{L-2})\bm \Sigma_{L-2}\cdots(\bm V_2^T \bm U_1)\bm \Sigma_{1}\bm V_1^T \notag \\
 \overset{(\ref{eq3:bala},\ref{eq:u1v1})}{=} &\bm U_L \blk\left(
        \prod_{l=L-1}^1 \sigma_{\hat{s}_1} \bm Q_{l}^{(1)},\ldots,\prod_{l=L-1}^1 \sigma_{\hat{s}_p} \bm Q_{l}^{(p)},\bm 0 \right)\bm \Sigma_1 \bm V_1^T  \notag  \\
 =\quad ~ &\bm U_L \blk(\widetilde{\bm \Sigma}^L,\bm 0\bm )\bm Q\bm V_1^T,
\end{align}}
where $\prod_{l=L-1}^1 \bm Q_{l}^{(j)} := \bm Q_{L-1}^{(j)} \ldots \bm Q_{1}^{(j)}$ for each $j \in [p]$ and
\begin{equation}\label{eq:Qexpression}
    \bm Q = \mathrm{BlkD}\left(\tilde{\bm Q}, \bm{I}_{d_0-r}\right) = \mathrm{BlkD} \left(\prod_{l=L-1}^1 \bm Q_{l}^{(1)},\dots,\prod_{l=L-1}^1 \bm Q_{l}^{(p)},\bm{I} \right) \in \mathcal{O}^{d_0}.
\end{equation}
Right-multiplying \eqref{eq:crit} by $\bm W_L^T$ when $l=L$ and left-multiplying \eqref{eq:crit} by  $\bm W_1^T$  when $l=1$, we obtain
\begin{align*}
\left(\bm W_L\bm W_L^T \right)^L - \sqrt{\lambda}\bm Y\bm W_{L:1}^T+\lambda\bm W_L\bm W_L^T = \bm 0, \left(\bm W_1^T\bm W_1 \right)^L - \sqrt{\lambda}\bm W_{L:1}^T\bm Y + \lambda\bm W_1^T\bm W_1 = \bm 0.
\end{align*}
Substituting \eqref{eq:svdwl}, \eqref{eq3:bala}, and \eqref{eq1:thm opti G} into the above equations, together with $\bm U_L \in \mathcal{O}^{d_L}$ and $\bm V_1 \in \mathcal{O}^{d_0}$, yields
\begin{align}
& \sqrt{\lambda} \bm U_L^T \bm Y \bm V_1\bm Q^T\mathrm{BlkD}\left(\tilde{\bm \Sigma}^L, \bm 0_{(d_0-r)\times (d_L-r)} \right) =  \mathrm{BlkD}\left(\tilde{\bm \Sigma}^{2L} + \lambda\tilde{\bm \Sigma}^{2}, \bm 0 \right), \label{eq2:thm opti G}\\
& \sqrt{\lambda}\bm Q^T\mathrm{BlkD}\left(\tilde{\bm \Sigma}^L, \bm 0_{(d_0-r)\times (d_L-r)} \right) \bm U_L^T\bm Y\bm V_1  =  \mathrm{BlkD}\left(\tilde{\bm \Sigma}^{2L} + \lambda\tilde{\bm \Sigma}^{2}, \bm 0  \right). \notag
\end{align}
According to the above equation and $\bm Q \in \mathcal{O}^{d_0}$, we obtain
\begin{align*}
\sqrt{\lambda} \mathrm{BlkD}\left(\tilde{\bm \Sigma}^L, \bm 0_{(d_0-r)\times (d_L-r)} \right) \bm U_L^T\bm Y\bm V_1 & =  \bm Q \mathrm{BlkD}\left(\tilde{\bm \Sigma}^{2L} + \lambda\tilde{\bm \Sigma}^{2}, \bm 0_{(d_0-r)\times(d_0-r)} \right)\\
& \overset{(\ref{eq3:bala}, \ref{eq:Qexpression})}{=}  \mathrm{BlkD}\left(\tilde{\bm \Sigma}^{2L} + \lambda\tilde{\bm \Sigma}^{2}, \bm 0 \right)\bm Q.
\end{align*}
Right-multiplying on both sides of the above equality by $\bm Q^T$ yields
\begin{align*}
\sqrt{\lambda}\mathrm{BlkD}\left(\tilde{\bm \Sigma}^L, \bm 0_{(d_0-r)\times (d_L-r)} \right) \bm U_L^T\bm Y\bm V_1\bm Q^T =  \mathrm{BlkD}\left(\tilde{\bm \Sigma}^{2L} + \lambda\tilde{\bm \Sigma}^{2}, \bm 0_{(d_0-r)\times(d_0-r)} \right).
\end{align*}
We now partition $\bm C := \bm U_L^T\bm Y\bm V_1\bm Q^T \in \R^{d_L\times d_0}$ into the block form {\small $\bm C = \begin{bmatrix}
\bm C_1 & \bm C_2\\
\bm C_3 & \bm S
\end{bmatrix}$}, where $\bm C_1 \in \R^{r\times r}$.
This, together with \eqref{eq2:thm opti G} and the above equation, yields
$
\bm C_1 =   ( \tilde{\bm \Sigma}^{L} + \lambda\tilde{\bm \Sigma}^{2-L} )/\sqrt{\lambda},\ \bm C_2 = \bm 0,\ \bm C_3 = \bm 0.
$
Consequently, we obtain
\begin{align}\label{eq5:thm opti G}
\bm U_L^T\bm Y\bm V_1\bm Q^T = \mathrm{BlkD}\left( {\tfrac{1}{\sqrt{\lambda}}} \left( \tilde{\bm \Sigma}^{L} + \lambda\tilde{\bm \Sigma}^{2-L} \right), \bm S\right).
\end{align}
Now, let $\bm U_S\bm \Sigma_S \bm V_S^T = \bm S$ be an SVD of $\bm S$, where $\bm U_S \in \mathcal{O}^{d_L-r}$, $\bm V_S \in \mathcal{O}^{d_0-r}$, and $\bm \Sigma_S \in \R^{(d_L-r)\times (d_0-r)}$. Substituting this into \eqref{eq5:thm opti G} and rearranging the terms yields
\begin{align}\label{eq6:thm opti G}
\begin{bmatrix}
\bm I_r & \bm 0 \\
\bm 0 & \bm U_S^T
\end{bmatrix}\bm U_L^T\bm Y\bm V_1\bm Q^T \begin{bmatrix}
\bm I_r & \bm 0 \\
\bm 0 & \bm V_S
\end{bmatrix} =\begin{bmatrix}
\frac{1}{\sqrt{\lambda}} \left( \tilde{\bm \Sigma}^{L} + \lambda\tilde{\bm \Sigma}^{2-L} \right) & \bm 0\\
\bm 0 & \bm \Sigma_S
\end{bmatrix}.
\end{align}
Since  $\mathrm{BlkD}\left(\bm I, \bm U_S^T\right)\bm U_L^T \in \mathcal{O}^{d_L}$ and $\bm V_1\bm Q^T\mathrm{BlkD}\left(\bm I, \bm V_S\right) \in \mathcal{O}^{d_0}$, the above left-hand side is an SVD of the right-hand diagonal matrix. Therefore, we obtain that the diagonal elements of $ \mathrm{BlkD}(( \tilde{\bm \Sigma}^{L} + \lambda\tilde{\bm \Sigma}^{2-L})/\sqrt{\lambda}, \bm \Sigma_S)$ are a permutation of those of $\bm Y$.
Thus,
there exists a permutation matrix $\bm \Pi \in \mathcal{P}^{d_Y}$ such that
\begin{align}
\notag
	 \begin{bmatrix}
\frac{1}{\sqrt{\lambda}} \left( \tilde{\bm \Sigma}^{L} + \lambda\tilde{\bm \Sigma}^{2-L} \right) & \bm 0\\
\bm 0 & \bm \Sigma_S
\end{bmatrix}  = \begin{bmatrix}
	\bm \Pi & \bm 0\\ \bm 0 & \bm I_{d_L - d_{Y}}
	\end{bmatrix} \bm Y \begin{bmatrix}
	\bm \Pi^T & \bm 0\\ \bm 0 & \bm I_{d_0 - d_{Y}}
	\end{bmatrix}.
\end{align}
Substituting this into \eqref{eq6:thm opti G} yields
\begin{align*}
\bm Y = \left(\begin{bmatrix}
	\bm \Pi^T & \bm 0\\ \bm 0 & \bm I_{d_L - d_{Y}}
	\end{bmatrix} \begin{bmatrix}
\bm I_r & \bm 0 \\
\bm 0 & \bm U_S^T
\end{bmatrix} \bm U_L^T\right) \bm Y \left( \bm V_1\bm Q^T \begin{bmatrix}
\bm I_r & \bm 0 \\
\bm 0 & \bm V_S
\end{bmatrix} \begin{bmatrix}
	\bm \Pi & \bm 0\\ \bm 0 & \bm I_{d_0 - d_{Y}}
	\end{bmatrix}\right).
\end{align*}
Since the right-hand side is an SVD of a diagonal matrix $\bm Y$ in \eqref{eq:Y} and \eqref{eq:SY}, there exist $\bm O_1 \in \mathcal{O}^{h_1},\dots,\bm O_{p_Y} \in \mathcal{O}^{h_{p_Y}},\bm O_{p_Y+1} \in \mathcal{O}^{d_0-r_Y}$ and $\widehat{\bm O}_{p_Y+1} \in \mathcal{O}^{d_L-r_Y}$ such that
\begin{align*}
& \begin{bmatrix}
	\bm \Pi^T & \bm 0\\ \bm 0 & \bm I_{d_0 - d_Y}
	\end{bmatrix} \begin{bmatrix}
\bm I_r & \bm 0 \\
\bm 0 & \bm V_S^T
\end{bmatrix}\bm Q \bm V_1^T = \mathrm{BlkD}\left( \bm O_1,\dots,\bm O_{p_Y},\bm O_{p_Y+1} \right), \\
& \begin{bmatrix}
	\bm \Pi^T & \bm 0\\ \bm 0 & \bm I_{d_L - d_{Y}}
	\end{bmatrix} \begin{bmatrix}
\bm I_r & \bm 0 \\
\bm 0 & \bm U_S^T
\end{bmatrix} \bm U_L^T = \mathrm{BlkD}\left( \bm O_1,\dots,\bm O_{p_Y},\widehat{\bm O}_{p_Y+1} \right).
\end{align*}
This implies
\begin{align}
& \bm V_1 = \mathrm{BlkD}\left( \bm O_1^T,\dots,\bm O_{p_Y}^T,\bm O_{p_Y+1}^T \right)\blk\left( \bm \Pi^T, \bm I\right) \blk\left(\bm I_r, \bm V_S^T \right) \bm Q,\label{eq9:thm opti G} \\
& \bm U_L = \mathrm{BlkD}\left( \bm O_1^T,\dots,\bm O_{p_Y}^T,\widehat{\bm O}_{p_Y+1}^T \right)\blk\left( \bm \Pi^T, \bm I\right) \blk\left(\bm I_r, \bm U_S^T \right). \label{eq8:thm opti G}
\end{align}
Substituting \eqref{eq3:bala} and \eqref{eq8:thm opti G} into \eqref{eq:svdwl} yields
\begin{align}\label{eq:defwL}
\bm W_L = \mathrm{BlkD}\left( \bm O_1^T,\dots,\bm O_{p_Y}^T,\widehat{\bm O}_{p_Y+1}^T \right)\mathrm{BlkD}\left(\bm \Pi^T, \bm I \right)\mathrm{BlkD}\left(\tilde{\bm \Sigma}, \bm 0 \right) \bm V_L^T.
\end{align}
Next, substituting \eqref{eq:Ul Vl} and \eqref{eq:u1v1} into \eqref{eq:svdwl} yields
\begin{align*}
    \bm W_{L-1}&= \bm V_{L} \mathrm{BlkD}\left(\bm Q_{L-1}^{(1)},\dots,\bm Q_{L-1}^{(p)}, \bm I \right)
    \mathrm{BlkD}\left(\tilde{\bm \Sigma}, \bm 0 \right) \bm V_{L-1}^T = \bm V_L \mathrm{BlkD}(\tilde{\bm \Sigma}, \bm 0) \bm P_{L-1}^T,
\end{align*}
where $\bm P_{L-1} := \bm V_{L-1}\mathrm{BlkD}\left( \bm{Q}_{L-1}^{(1)^T},\dots, \bm{Q}_{L-1}^{(p)^T}, \bm I \right) \in \mathcal{O}^{d_{L-2}}$. Using the same argument, we obtain
\begin{align}\label{eq:defwl}
\bm W_{l} = \bm P_{l+1}\mathrm{BlkD}(\tilde{\bm \Sigma}, \bm 0)\bm P_{l}^T,\ l = 2,\dots,L-2,
\end{align}
where $\bm P_{l} := \bm V_{l}\mathrm{BlkD}\left( \prod_{j=l}^{L-1} \bm Q_{j}^{(1)^T},\dots, \prod_{j=l}^{L-1} \bm Q_{j}^{(p)^T}, \bm I\right) \in \mathcal{O}^{d_{l-1}}$ for $l=2,\dots,L-2$. Finally, using \eqref{eq:Qexpression}  and \eqref{eq9:thm opti G}, we compute
\begin{align}\label{eq:defw1}
\bm W_1 & \overset{(\ref{eq:svdwl}, \ref{eq:u1v1})}{=} \bm P_{2}\mathrm{BlkD}(\tilde{\bm \Sigma}, \bm 0)\mathrm{BlkD}\left( \prod_{j=1}^{L-1} \bm Q_{j}^{(1)},\dots, \prod_{j=1}^{L-1} \bm Q_{j}^{(p)}, \bm I\right)\bm V_1^T \notag \\
& \overset{(\ref{eq:Qexpression}, \ref{eq9:thm opti G})}{=} \bm P_2 \mathrm{BlkD}(\tilde{\bm \Sigma}, \bm 0) \mathrm{BlkD}(\bm \Pi,\bm I) \mathrm{BlkD}\left( \bm O_1,\dots,\bm O_{p_Y},\bm O_{p_Y+1} \right).
\end{align}
Now, we define $\bm \sigma := \left(\sigma_1, \sigma_2, \dots, \sigma_{d_{\min}} \right) \in \R^{d_{\min}}$ and $\tilde{\bm \sigma} := (\bm \sigma, \bm 0) \in \R^{d_Y}$. We write $\bm \Sigma_l \in \R^{d_l\times d_{l-1}}$ as $\bm \Sigma_l =   \mathrm{BlkD}\left(\mathrm{diag}(\bm \sigma), \bm 0\right)$  for each $l \in [L].$
Next, it remains to show that $(\bm \sigma, \bm \Pi) \in \mathcal{B}$. Substituting \eqref{eq:defwL}, \eqref{eq:defwl}, and \eqref{eq:defw1} into $\nabla_{L} G(\bm W) = \bm 0$ yields
\begin{align*}
  & \nabla_L G(\bm W) \overset{\eqref{eq:grad G}}{=} 2\bm W_{L:1}\bm W_{L-1:1}^T - 2\sqrt{\lambda}\bm Y\bm W_{L-1:1}^T +2 \lambda \bm W_L\\
 & =2 \mathrm{BlkD}\left(\bm O_1^T,\dots,\bm O_{p_{Y}}^T,\widehat{\bm O}_{p_{Y}+1}^T\right)\mathrm{BlkD}\left(\bm \Pi^T (\mathrm{diag}^{2L-1}(\tilde{\bm \sigma})+\lambda \mathrm{diag}(\tilde{\bm \sigma})), \bm 0 \right) \bm V_L^T\\
& \quad  - 2\sqrt{\lambda}\bm Y \mathrm{BlkD}\left( \bm O_1^T,\dots,\bm O_{p_{Y}}^T,\bm O_{p_{Y}+1}^T \right)\mathrm{BlkD} (\bm \Pi^T\mathrm{diag}^{L-1}(\tilde{\bm \sigma}),\bm 0)\bm V_L^T=\bm 0.
\end{align*}
This, together with
the diagonal structure in \eqref{eq:SY}, implies
\begin{align*}
\mathrm{BlkD}\left(\bm \Pi^T (\mathrm{diag}^{2L-1}(\tilde{\bm \sigma})+\lambda \mathrm{diag}(\tilde{\bm \sigma})), \bm 0 \right) =
\sqrt{\lambda}\bm Y \mathrm{BlkD}\left(\bm \Pi^T\mathrm{diag}^{L-1}(\tilde{\bm \sigma}), \bm 0 \right).
\end{align*}
This implies $\bm{\Pi}^T \tilde{\bm{\sigma}} \in \mathcal{A}$. Hence, there exists $\bm{a} \in \mathcal{A}$ with $\bm{\Pi}\bm{a} = \tilde{\bm{\sigma}}$, giving $(\bm{\sigma}, \bm{\Pi}) \in \mathcal{B}$.
\end{proof}
Equipped with \Cref{lem:equi FG} and \Cref{prop:opti G}, we are ready to prove \Cref{thm:crit}, which characterizes the critical point set $\mathcal{W}_F$ of Problem \eqref{eq:F}.
\begin{proof}[Proof of \Cref{thm:crit}]
By \Cref{lem:equi FG}(i), every critical point in $\mathcal{W}_F$ can be characterized using the explicit form of $\mathcal{W}_G$, which is given in \Cref{prop:opti G}.
Let $(\bm{W}_1,\dots,\bm{W}_L) \in \mathcal{W}_G$ be any critical point.
Combining \Cref{lem:equi FG}(i) with the SVD $\bm{Y} = \bm{U}_Y \bm{\Sigma}_Y \bm{V}_Y^T$ yields
$$(\bm W_1\bm V_Y^T/\sqrt{\lambda_1},\bm W_2/\sqrt{\lambda_2},\dots,\bm W_{L-1}/\sqrt{\lambda_{L-1}},\bm U_Y\bm W_L/\sqrt{\lambda_L}) \in \mathcal{W}_F.$$
This, together with the definitions of $\mathcal{A}$ and $\mathcal{B}$, yields \eqref{eq:crit sol} and \eqref{eq:thmsimga pi}.
\end{proof}
We now show that all $\bm{W} \in \mathcal{W}_{(\bm{\sigma}, \bm{\Pi})}$ share the same landscape for any $(\bm{\sigma}, \bm{\Pi}) \in \mathcal{B}$.

\begin{lemma}\label{prop:equivalent}
Suppose $L \ge 3$. Let $(\bm{\sigma}, \bm{\Pi}) \in \mathcal{B}$ be arbitrary. Then for any pair $\bm{W},\, \hat{\bm{W}} \in \mathcal{W}_{(\bm{\sigma}, \bm{\Pi})}$,
$\bm{W}$ is a local minimizer, local maximizer, global minimizer, strict saddle point, or non-strict saddle point of Problem~\eqref{eq:G} if and only if $\hat{\bm{W}}$ is of the same type.
\end{lemma}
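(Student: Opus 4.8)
The plan is to realize $\mathcal{W}_{(\bm{\sigma},\bm{\Pi})}$, for a fixed $(\bm{\sigma},\bm{\Pi})\in\mathcal{B}$, as a single orbit of a group of \emph{linear} maps that leave $G$ invariant, and then to invoke the pullback principle already used in the proof of \Cref{lem:equi FG}(ii) via \Cref{lem:boumalcite}: if $\Psi$ is a smooth bijection of the parameter space with $G\circ\Psi=G$, then $\bm W$ and $\Psi(\bm W)$ are critical points of the same type.

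\emph{Step 1: a symmetry group of $G$.} For orthogonal matrices $\bm R_l\in\mathcal{O}^{d_l}$ ($l\in[L-1]$) and $\bm S_0\in\mathcal{O}^{d_0}$, $\bm S_L\in\mathcal{O}^{d_L}$ satisfying $\bm S_L\bm Y\bm S_0^T=\bm Y$, consider the linear map $\Phi$ on $\prod_{l=1}^{L}\R^{d_l\times d_{l-1}}$ defined by
\begin{align*}
\Phi(\bm W) = \big( \bm R_1\bm W_1\bm S_0,\ \bm R_2\bm W_2\bm R_1^T,\ \ldots,\ \bm R_{L-1}\bm W_{L-1}\bm R_{L-2}^T,\ \bm S_L^T\bm W_L\bm R_{L-1}^T \big).
\end{align*}
I would first check that $G\circ\Phi=G$. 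The inner factors $\bm R_l$ telescope, so the product of the components of $\Phi(\bm W)$ equals $\bm S_L^T\bm W_{L:1}\bm S_0$; hence the fitting term becomes $\|\bm S_L^T\bm W_{L:1}\bm S_0-\sqrt{\lambda}\bm Y\|_F^2=\|\bm W_{L:1}-\sqrt{\lambda}\,\bm S_L\bm Y\bm S_0^T\|_F^2=\|\bm W_{L:1}-\sqrt{\lambda}\bm Y\|_F^2$, using orthogonal invariance of $\|\cdot\|_F$ and $\bm S_L\bm Y\bm S_0^T=\bm Y$, while each regularizer $\|\bm W_l\|_F^2$ is unchanged for the same reason. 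One also records that these maps are closed under composition and inversion (the inverse uses $\bm R_l^T,\bm S_0^T,\bm S_L^T$, which again satisfy the $\bm Y$-stabilizing identity), i.e., they form a group $\mathcal{G}$ acting on the parameter space with $G$ as an invariant.

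\emph{Step 2: $\mathcal{W}_{(\bm{\sigma},\bm{\Pi})}$ lies in a single $\mathcal{G}$-orbit.} Let $\bm W^0\in\mathcal{W}_{(\bm{\sigma},\bm{\Pi})}$ be the canonical point obtained by setting all of $\bm Q_l,\bm O_i,\bm O_{p_Y+1},\widehat{\bm O}_{p_Y+1}$ in \eqref{set:sigmapi} equal to identity. Reading off \eqref{set:sigmapi}, an arbitrary $\bm W\in\mathcal{W}_{(\bm{\sigma},\bm{\Pi})}$ equals $\Phi(\bm W^0)$ with $\bm R_l=\bm Q_{l+1}$ ($l\in[L-1]$), $\bm S_0=\mathrm{BlkDiag}(\bm O_1,\ldots,\bm O_{p_Y},\bm O_{p_Y+1})$, and $\bm S_L=\mathrm{BlkDiag}(\bm O_1,\ldots,\bm O_{p_Y},\widehat{\bm O}_{p_Y+1})$. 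The only nontrivial point is that this pair $(\bm S_0,\bm S_L)$ stabilizes $\bm Y$: by \eqref{eq:Y} and \eqref{eq:SY} we may write $\bm Y=\mathrm{BlkDiag}(y_{s_1}\bm I_{h_1},\ldots,y_{s_{p_Y}}\bm I_{h_{p_Y}},\bm 0_{(d_L-r_Y)\times(d_0-r_Y)})$, so blockwise $\bm O_i(y_{s_i}\bm I_{h_i})\bm O_i^T=y_{s_i}\bm I_{h_i}$ and the trailing zero block is untouched, whence $\bm S_L\bm Y\bm S_0^T=\bm Y$. Thus every element of $\mathcal{W}_{(\bm{\sigma},\bm{\Pi})}$ is of the form $\Phi(\bm W^0)$ for some $\Phi\in\mathcal{G}$.

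\emph{Step 3: conclusion.} Given $\bm W,\hat{\bm W}\in\mathcal{W}_{(\bm{\sigma},\bm{\Pi})}$, write $\bm W=\Phi_1(\bm W^0)$ and $\hat{\bm W}=\Phi_2(\bm W^0)$ as in Step 2; then $\Psi:=\Phi_2\circ\Phi_1^{-1}\in\mathcal{G}$ satisfies $G\circ\Psi=G$ and $\Psi(\bm W)=\hat{\bm W}$. Since $\Psi$ is a linear—hence smooth—bijection, \Cref{lem:boumalcite} (equivalently, a direct neighborhood comparison together with $\nabla^2 G(\hat{\bm W})[\Psi\bm D,\Psi\bm D]=\nabla^2 G(\bm W)[\bm D,\bm D]$ and the bijectivity of $\Psi$ on directions) yields that $\bm W$ is a local minimizer, local maximizer, global minimizer, strict saddle point, or non-strict saddle point of \eqref{eq:G} if and only if $\hat{\bm W}$ is, which is the claim. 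I expect the main obstacle to be the dimension and index bookkeeping in Step 2: correctly matching $\bm R_l,\bm S_0,\bm S_L$ to the parametrization in \eqref{set:sigmapi}, keeping the distinct column counts $d_0-r_Y$ (input side) and $d_L-r_Y$ (output side) straight, and verifying $\bm S_L\bm Y\bm S_0^T=\bm Y$; once the symmetry group is in place, the remaining arguments are routine.
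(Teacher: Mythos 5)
Your proposal is correct and follows essentially the same route as the paper: the paper also relates an arbitrary $\bm W \in \mathcal{W}_{(\bm{\sigma},\bm{\Pi})}$ to the canonical representative (all $\bm Q_l$, $\bm O_i$ set to identity) via exactly the linear, $G$-preserving orthogonal-conjugation map you call $\Phi$, exploiting the repeated-singular-value block structure of $\bm Y$ for the invariance, and then invokes \Cref{lem:boumalcite}. Your group-orbit packaging and the explicit composition $\Phi_2\circ\Phi_1^{-1}$ to connect two arbitrary points is just a mild reorganization of the same argument.
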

\begin{proof}
According to \Cref{prop:opti G}, there exist $\bm Q_l\in\mathcal O^{d_{l-1}}$ for $l=2,\ldots,L$, $\bm{O}_i \in \mathcal{O}^{h_i}$ for each $i \in [p_{Y}]$, $\bm{O}_{p_{Y}+1} \in \mathcal{O}^{d_0 - r_{Y}}$, and $\widehat{\bm{O}}_{p_{Y}+1} \in \mathcal{O}^{d_L - r_{Y}}$ such that $\bm W = (\bm W_1,\dots,\bm W_L)$ satisfies
    \begin{align*}
    \left\{
    \begin{array}{l}
        \bm{\Sigma}_l = \mathrm{BlkD}\left(\mathrm{diag}(\bm{\sigma}), \bm{0}\right) \in \mathbb{R}^{d_l\times d_{l-1}},\ \forall l \in [L],\\
        \bm{W}_1 = \bm{Q}_2\bm{\Sigma}_1 \mathrm{BlkD}\left(\bm{\Pi}, \bm{I}\right)\mathrm{BlkD}\left(\bm{O}_1,\dots,\bm{O}_{p_{Y}},\bm{O}_{p_{Y}+1}\right), \\
        \bm{W}_l = \bm{Q}_{l+1} \bm{\Sigma}_l \bm{Q}_l^T,\ l=2,\dots,L-1,\ \bm{Q}_l \in \mathcal{O}^{d_{l-1}},\ l=2,\dots,L,  \\
        \bm{W}_L = \mathrm{BlkD}\left(\bm{O}_1^T,\dots,\bm{O}_{p_{Y}}^T,\widehat{\bm{O}}_{p_{Y}+1}^T\right)\mathrm{BlkD}\left(\bm{\Pi}^T, \bm{I}\right)\bm{\Sigma}_L \bm{Q}_L^T.
    \end{array}
    \right.
    \end{align*}
Now, let $\bm W^\prime = (\bm W_1^\prime,\dots,\bm W_L^\prime)$ be such that
\begin{align*}
    \bm{W}_1^\prime  = \bm{\Sigma}_1 \mathrm{BlkD}(\bm{\Pi}, \bm{I}),\ \bm{W}_l^\prime = \bm{\Sigma}_l, \ l=2,\dots,L-1,\ \bm{W}_L^\prime = \mathrm{BlkD}(\bm{\Pi}^T, \bm{I}) \bm{\Sigma}_L,
\end{align*}
For any $\bm Z = (\bm Z_1,\dots,\bm Z_L) \in \R^{d_1\times d_0} \times \dots \times \R^{d_{L}\times d_{L-1}}$, we construct a linear bijection
    \begin{align}
    \psi(\bm{Z}_1,\ldots,\bm{Z}_L) &= \left(\bm{Q}_2\bm{Z}_1\mathrm{BlkD}\left(\bm{O}_1,\dots,\bm{O}_{p_{Y}},\bm{O}_{p_{Y}+1}\right), \bm{Q}_3\bm{Z}_2\bm{Q}_2^T,\ldots, \right. \notag \\
    &\quad \left. \bm{Q}_L \bm{Z}_{L-1}\bm{Q}_{L-1}^T, \mathrm{BlkD}\left(\bm{O}_1^T,\dots,\bm{O}_{p_{Y}}^T,\widehat{\bm{O}}_{p_{Y}+1}^T\right)\bm{Z}_L\bm{Q}_L^T\right).\notag
    \end{align}
    Then, one can verify that $ G(\bm{Z}) = G(\psi(\bm{Z})) $ holds for all $\bm{Z}$.
    Using this and \Cref{lem:boumalcite} and noting that $\bm W \in \mathcal{W}_{(\bm{\sigma}, \bm{\Pi})}$ is arbitrary, we obtain the desired result.
\end{proof}
Next, we present a lemma that characterizes the properties of the singular values belonging to the sets defined in \eqref{eq:defsetgamma}. Since the proof of the lemma only involves standard techniques from elementary calculus, we defer it to \Cref{subsec:proofderprop}.

\begin{lemma}\label{lem:derprop}
{
Let $f(x;y)$ be defined in \eqref{eq:deffty}, $L\ge 3$, and
\begin{align}
& x_* := \left(\frac{L-2}{L}\right)^{\frac{1}{2L-2}}\lambda^{\frac{1}{2L-2}},\label{eq:x*}\\
& y_* := \left( \left( \frac{L-2}{L} \right)^{\!\frac{L}{2L-2}}
+ \left( \frac{L}{L-2} \right)^{\!\frac{L-2}{2L-2}} \right)
\lambda^{\frac{1}{2L-2}}. \label{eq:defnewbary}
\end{align}
The following statements hold:  \\
(i) If $y > y_*$, $f(x; y) = 0$ has two distinct positive roots; if $y = y_*$, it has a unique positive root; and otherwise it has no positive root. \\
(ii) If $y > y_*$, the larger positive root $\overline{x}(y)$ of $f(x; y) = 0$ is strictly increasing in $y$, while its smaller positive root $\underline{x}(y)$ is strictly decreasing in $y$.\\
(iii) If there exists $i\in [d_Y]$ such that $y_i > y_*$, then $f(x; y_i) = 0$ has two distinct positive roots $\overline{x}(y_i) \in \mathcal{S}_1$ satisfying ${\partial_x f(\overline{x}(y_i); y_i)} > 0$ and $\underline{x}(y_i) \in \mathcal{S}_2$ satisfying ${\partial_x f(\underline{x}(y_i); y_i)} < 0$. \\
(iv) If $\mathcal{S}_3 \neq \emptyset$, then $\mathcal{S}_3 = \{x_*\}$.  If there exists $i\in [d_Y]$ such that $y_i = y_*$, then $f(x; y_i) = 0$ has a unique positive root $\hat{x}(y_i) =x_*$ satisfying $\tfrac{\partial f(x_*; y_i)}{\partial x} = 0 $ and $\tfrac{\partial^2 f(x_*; y_i)}{\partial x^2} > 0$. \\
(v) For any $z_1 \in \mathcal{S}_1$ and $z_2 \in \mathcal{S}_2$, it holds that $z_1 > x_* > z_2$. \\
}
\end{lemma}

\subsection{Analysis of the Loss Landscape}\label{subsec:land}

In this subsection, we analyze the loss landscape of Problem \eqref{eq:G}. Notably, our proof follows the steps outlined in \Cref{fig2:env}.
Let \( (\bm{\sigma}, \bm{\Pi}) \in \mathcal{B} \) and \( \bm{W} = (\bm{W}_1, \ldots, \bm{W}_L) \in \mathcal{W}_{(\bm{\sigma}, \bm{\Pi})} \) be arbitrary. By \Cref{prop:equivalent}, all points in $\mathcal{W}_{(\bm{\sigma}, \bm{\Pi})}$ are of the same critical point type. This, together with \eqref{set:sigmapi}, implies that it suffices to study  $\bmw = (\bm{W}_1, \ldots, \bm{W}_L) \in \mathcal{W}_{(\bm{\sigma}, \bm{\Pi})} $ of the following form:
    \begin{align}\label{eq:simplification}
    \begin{cases}
        \bm \Sigma_l = \mathrm{BlkD}\left(\mathrm{diag}(\bm \sigma), \bm 0\right) \in \R^{d_l\times d_{l-1}},\ \forall l \in [L], \\
        \bm W_1 = \bm \Sigma_1 \blk(\bm \Pi, \bm I), \bm W_l = \bm \Sigma_l , l=2,\dots,L-1, \bm W_L =  \blk(\bm \Pi^T, \bm I)\bm \Sigma_L.
    \end{cases}
    \end{align}
For convenience, define $\widehat{\bm{Y}} := \blk(\bm{\Pi}, \bm{I}) \bm{Y}\blk(\bm{\Pi}^T\!,\bm{I})$. Using \eqref{eq:Y}, we have
\begin{align}\label{eq:Yhat}
    \widehat{\bm Y} = \blk\left( \mathrm{diag}(y_{\pi(1)},\dots,y_{\pi(d_Y)}), \bm 0_{(d_L-d_Y)\times (d_0-d_Y)} \right).
\end{align}
When $\bm W$ takes the form of \eqref{eq:simplification}, we obtain
\begin{align}\label{eq:calculation}
     \|\bm W_L\cdots\bm W_1 - \sqrt{\lambda} \bm Y \|_F^2 = \|\bm \Sigma_L\cdots\bm \Sigma_1 - \sqrt{\lambda}\widehat{\bm Y}\|_F^2.
\end{align}
With the above setup, we first show that Problem \eqref{eq:G} admits no local maximizers. Using the function  $f(x;y)$ defined in \eqref{eq:deffty}, we rewrite the set \(\mathcal{B}\) defined in \Cref{prop:opti G} as
\begin{align} \label{set:newb}
\mathcal{B} = \left\{ (\bm \sigma, \bm \Pi) \in \mathbb{R}^{d_{\min}} \times \mathcal{P}^{d_Y} : f(\sigma_i; y_{\pi(i)}) = 0,\ \forall i,\ \sigma_1 \ge \cdots \ge \sigma_{d_{\min}} \ge 0 \right\},
\end{align}
where \( \pi : [d_Y] \to [d_Y] \) is the permutation corresponding to \( \bm \Pi \).
\begin{lemma}\label{lem:nonlocalmax}
    Suppose that $L \ge 3$ and $\bm Y$ is defined in \eqref{eq:Y}. Let \( (\bm{\sigma}, \bm{\Pi}) \in \mathcal{B} \)  and \( \bm{W} = (\bm{W}_1, \ldots, \bm{W}_L) \in \mathcal{W}_{(\bm{\sigma}, \bm{\Pi})} \) be arbitrary. Then $\bm W$ is not a local maximizer.
\end{lemma}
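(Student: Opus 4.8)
The plan is to exhibit, for an arbitrary critical point $\bm W$ of the form \eqref{eq:simplification}, a single perturbation direction $\bm D$ along which the second-order term of $G$ is strictly positive; by \eqref{eq:defhessian} this forces $\nabla^2 G(\bm W)[\bm D,\bm D] > 0$, so $\bm W$ cannot be a local maximizer. By \Cref{prop:equivalent} it suffices to work with the reduced representative in \eqref{eq:simplification}, and by \eqref{eq:calculation} the quadratic objective decouples into scalar/coordinate pieces governed by the singular values $\bm\sigma$ and the permuted data $\widehat{\bm Y}$ in \eqref{eq:Yhat}. So the idea is to perturb only one coordinate at a time and reduce to a one-variable computation.

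Concretely, I would split into two cases depending on whether $\bm\sigma = \bm 0$ (equivalently $r_\sigma = 0$). If $\bm\sigma \ne \bm 0$, pick an index $i \in [r_\sigma]$ and perturb the $i$-th singular value: set $\bm D$ so that each $\bm\Sigma_l$ is incremented by $t\,\bm e_i\bm e_i^\T$ (with the other layers' factors held fixed), i.e. move $\sigma_i \mapsto \sigma_i + t$ simultaneously in every layer. Then $G(\bm W + t\bm D)$ restricted to this line is, up to an additive constant, $\bigl((\sigma_i+t)^L - \sqrt\lambda\, y_{\pi(i)}\bigr)^2 + \lambda L (\sigma_i+t)^2$, a one-variable function $g_i(t)$. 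Since $\sigma_i > 0$ is a root of $f(\cdot\,;y_{\pi(i)})$, one checks $g_i'(0)=0$ (consistent with criticality) and $g_i''(0) > 0$: indeed $g_i''(0)$ is, up to a positive factor, $\partial_x f(\sigma_i;y_{\pi(i)})$ plus a manifestly positive remainder coming from the $L^2 x^{2L-2}$-type curvature term, and even at a root $\sigma_i \in \mathcal S_2$ where $\partial_x f < 0$ (Lemma \ref{lem:derprop}(ii)) the full second derivative of $g_i$ stays positive because the $(\sigma_i+t)^L$ expansion contributes an extra positive second-order piece. This yields the desired $\bm D$. If instead $\bm\sigma = \bm 0$, then $\bm W = \bm 0$ and all products $\bm W_{L:1}$, $\bm W_{l-1:1}$, $\bm W_{L:l+1}$ vanish; for $L\ge 3$, perturbing a single layer $\bm W_l \mapsto t\bm E$ while keeping the rest zero leaves the reconstruction term unchanged to second order (the product stays zero since $L-1\ge 2$ other factors are zero) but increases the regularizer by $\lambda t^2\|\bm E\|_F^2 > 0$, so again the Hessian is positive along that direction.

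The step I expect to be the main obstacle is the curvature computation in the $\bm\sigma \ne \bm 0$ case: I need to verify that the second-order term along the chosen direction is \emph{strictly} positive uniformly over all three root regimes $\mathcal S_1, \mathcal S_2, \mathcal S_3$ — in particular at $\mathcal S_2$-roots, where $f$ has negative slope, the positivity must come entirely from the $L$-th power expansion, so the algebra has to be done carefully (it amounts to showing $L\sigma_i^{2L-2} + \lambda L - \sqrt\lambda\,y_{\pi(i)}\,L(L-1)\sigma_i^{L-2}/\,\text{(something)} > 0$, which after substituting the root relation $\sigma_i^{2L-1}-\sqrt\lambda y_{\pi(i)}\sigma_i^{L-1}+\lambda\sigma_i = 0$ reduces to an inequality in $\sigma_i$ alone). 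An alternative that may be cleaner: rather than perturbing all layers' $\sigma_i$ simultaneously, perturb just $\bm W_1$ (or just $\bm W_L$) in the rank-one direction aligned with the $i$-th singular triple; then the reconstruction term's second-order contribution is a single squared linear term plus the regularizer's $\lambda t^2$, which is visibly nonnegative and strictly positive once one notes the regularizer alone gives $\lambda > 0$. I would try this one-layer perturbation first since it sidesteps the delicate cancellation, falling back to the all-layers perturbation only if the one-layer Hessian happens to vanish (which, by balancedness \eqref{eq:bala} and $L\ge 3$, it should not).
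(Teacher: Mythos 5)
Your preferred route---the single-layer perturbation---is correct and is essentially the paper's own proof: the paper perturbs only the last layer (an off-diagonal $(1,2)$ entry transported by $\bm\Pi$), so that $G$ restricted to the line is exactly quadratic, $G(\bm W+t\bm\Delta)=G(\bm W)+\sigma_2^{2L-2}t^2+\lambda t^2$, and the regularizer alone forces a strict increase for every $t\neq 0$. Your version of this observation is in fact slightly more general: for any direction supported on a single layer, the product $\bm W_{L:1}$ is affine in $t$, the linear term vanishes because $\nabla G(\bm W)=\bm 0$, and the quadratic coefficient is at least $\lambda\|\bm\Delta_l\|_F^2>0$; this handles the $\bm\sigma=\bm 0$ case without a separate argument, and no appeal to balancedness \eqref{eq:bala} is needed.

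However, your primary direction (incrementing $\sigma_i$ simultaneously in all $L$ layers) does not work uniformly, and the justification you sketch for it is false. Along that line the restricted objective is, up to an additive constant, $g_i(t)=\bigl((\sigma_i+t)^L-\sqrt{\lambda}\,y_{\pi(i)}\bigr)^2+\lambda L(\sigma_i+t)^2$, and a direct computation gives $g_i''(0)=2L\,\partial_x f(\sigma_i;y_{\pi(i)})$ \emph{exactly}---the ``$L^2x^{2L-2}$-type curvature term'' you invoke is already contained in $\partial_x f$, and there is no additional positive second-order remainder. By \Cref{lem:derprop}(ii) this quantity is strictly negative when $\sigma_i\in\mathcal{S}_2$; indeed this is precisely the negative-curvature direction the paper uses in \Cref{prop:gamma2} to prove that such critical points are strict saddles. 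So the ``delicate cancellation'' you hoped to verify does not hold, and had you fallen back to the all-layers perturbation the argument would fail at $\mathcal{S}_2$ roots (negative curvature along a line neither proves nor disproves that $\bm W$ is a local maximizer; you need an increase direction, which only the single-layer perturbation supplies, via the regularizer). With the all-layers route discarded and the single-layer computation promoted to the main argument, your proof coincides with the paper's.
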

\begin{proof}
Consider that \( \bm{W} \) takes the form in \eqref{eq:simplification}.
Since \(\max_{0\le j\le L} d_j\ge2\), we may, without loss of notation,
consider the case \(d_{L-1}\ge2\); the other cases are handled by applying
the same construction to the corresponding layer. We construct a direction
\( \bm{\Delta} = (\bm \Delta_1,\dots,\bm \Delta_L) \in
\R^{d_1\times d_0} \times \dots \times \R^{d_L\times d_{L-1}}\)
with \(\bm{\Delta}_{l} = \bm 0\) for each \(l \in [L-1]\) and
\(\bm{\Delta}_{L} = \blk(\bm{\Pi}^T, \bm{I}) \bm{\Theta}_L\), where
\(\bm{\Theta}_L \in \mathbb{R}^{d_L \times d_{L-1}}\) satisfies
\(\bm{\Theta}_L(1,2)=1\) and all its other entries are zero.
    According to \eqref{eq:G} and \eqref{eq:simplification}, we compute
\begin{align*}
    G(\bm W+t\bm \Delta) & = \|(\bm W_L+t\bm \Delta_L)\bm W_{(L-1):1} - \sqrt{\lambda}\bm Y\|_F^2 \\
    &\quad + \lambda \left( \sum_{l=1}^{L-1}\|\bm W_l\|_F^2 + \|\bm W_L+t\bm \Delta_L\|_F^2\right)  = G(\bm W) +\sigma_2^{2L-2}t^2+\lambda t^2.
\end{align*}
For any $t \neq 0$, it holds that $G(\bm W+t\bm \Delta) > G(\bm W) $. Thus, $\bm W$ is not a local maximizer.
\end{proof}

According to this lemma and \Cref{def:crit}(iv), we know that a critical point is either a saddle point or a local minimizer. Now, we show that a critical point $\bm W$ is a strict saddle point if it has a singular value $\sigma_i \in \mathcal{S}_2$ (see \eqref{eq:defsetgamma}).

\begin{proposition}\label{prop:gamma2}
Suppose that $L \ge 3$ and $\bm Y$ is defined in \eqref{eq:Y}. Let \( (\bm{\sigma}, \bm{\Pi}) \in \mathcal{B} \) be arbitrary and \( \bm{W} = (\bm{W}_1, \ldots, \bm{W}_L) \in \mathcal{W}_{(\bm{\sigma}, \bm{\Pi})} \) be any critical point. Suppose in addition that there exists some \(i \in [r_\sigma] \) such that \( \sigma_i \in \mathcal{S}_2 \). Then, \( \bm{W} \) is a strict saddle point, where $r_{\sigma} := \|\bm \sigma\|_0$.
\end{proposition}
\begin{proof}
    Consider that \( \bm{W} \) takes the form in \eqref{eq:simplification}. 
     We construct \( \bm{\Delta}  \) as $\bm{\Delta}_1 = \bm{E}_1\blk(\bm{\Pi}, \bm{I})$, $\bm{\Delta}_{l} = \bm{E}_l$, $l = 2, \dots, L-1$, and $\bm{\Delta}_{L} = \blk(\bm{\Pi}^T, \bm{I}) \bm{E}_L$. Here, the $(i,i)$-th entry of $\bm{E}_l \in \mathbb{R}^{d_l \times d_{l-1}}$  is $-1$ and all other entries are $0$ for each $l \in [L]$.
According to \eqref{eq:Y}, \eqref{eq:simplification}, and \eqref{eq:calculation}, we compute
    \begin{align*}
        G(\bmw) &= \left\| \prod_{l=L}^1\bm \Sigma_l - \sqrt{\lambda}\widehat{\bm Y} \right\|_F^2 + \lambda\sum_{l=1}^L \|\bm \Sigma_l\|_F^2 \\
        &=\sum_{j=1}^{d_{\min}}\left(\sigma_j^L-\sqrt{\lambda}y_{\pi(j
        )}\right)^2+\sum_{j=d_{\min}+1}^{d_Y} \lambda y_{\pi(j)}^2 +\lambda L\sum_{j=1}^{d_{\min}}\sigma_j^2, \\
        G(\bmw+t \bm \Delta)
&=\sum_{j=1,j\neq i}^{d_{\min}}\left((\sigma_j^L-\sqrt{\lambda}y_{\pi(j)})^2+ \lambda L\sigma_j^2\right) +  \sum_{j=d_{\min}+1}^{d_Y} \lambda y_{\pi(j)}^2\\
&\qquad + \left((\sigma_i-t)^L-\sqrt{\lambda}y_{\pi(i)}\right)^2 + \lambda L(\sigma_i-t)^2,
    \end{align*}
    where $\pi:[d_Y] \to [d_Y]$ is the permutation corresponding to $\bm \Pi$.
Based on the above two equations, we compute
\begin{align*}
& \qquad G(\bmw+t\bm\Delta) - G(\bmw) \\
& = \left((\sigma_i - t)^L - \sqrt{\lambda} y_{\pi(i)}\right)^2 + \lambda L (\sigma_i - t)^2 -\left(\sigma_i^L - \sqrt{\lambda} y_{\pi(i)}\right)^2 - \lambda L \sigma_i^2\\
& =
-2L \left(\sigma_i^{2L-1}-\sqrt{\lambda}y_{\pi(i)}\sigma_i^{L-1}+\lambda\sigma_i\right)t \\
&\quad\ + L\left((2L-1)\sigma_i^{2L-2}-\sqrt{\lambda}(L-1)y_{\pi(i)}\sigma_i^{L-2}+\lambda \right)t^2
+ O(t^3)\\
& = -2L f(\sigma_i;y_{\pi(i)}) t +L\partial_x f(\sigma_i;y_{\pi(i)}) t^2 +O(t^3)
= L\partial_x f(\sigma_i;y_{\pi(i)}) t^2 + O(t^3),
\end{align*}
where the last equality uses $f(\sigma_i; y_{\pi(i)}) =0$ due to  $(\bm\sigma, \bm \Pi)\in \mathcal{B}$ and \eqref{set:newb}. This, together with $\nabla G(\bm W) = \bm 0$ and \eqref{eq:defhessian}, yields $\nabla^2 G(\bm W)[\bm \Delta,\bm \Delta] = 2L\partial_x f(\sigma_i; y_{\pi(i)}) < 0$, where the inequality uses $\sigma_i \in \mathcal{S}_2$ and $\partial_x f(\sigma_i; y_{\pi(i)}) < 0$ due to { \Cref{lem:derprop}(iii)}.
\end{proof}
Following the proof roadmap in \Cref{fig2:env}, we proceed to identify other conditions under which $\bm{W} \in \mathcal{W}_{(\bm{\sigma}, \bm{\Pi})}$ is a strict saddle point.
\begin{proposition}\label{prop:misalign}
Suppose $L \ge 3$ and $\bm Y$ is defined in \eqref{eq:Y}. Let $(\bm{\sigma}, \bm{\Pi}) \in \mathcal{B}$ be arbitrary and $\bm{W} = (\bm{W}_1, \ldots, \bm{W}_L) \in \mathcal{W}_{(\bm{\sigma}, \bm{\Pi})}$ be any critical point, and define $r_\sigma := \|\bm{\sigma}\|_0$. If $\sigma_i \in \mathcal{S}_1\cup\mathcal{S}_3$ for each $i \in [r_\sigma]$ and
$(y_{\pi(1)},\ldots,y_{\pi(r_\sigma)}) \neq (y_1,\ldots,y_{r_\sigma})$, $\bm{W}$ is a strict saddle point.
\end{proposition}
\begin{proof}
Consider that \( \bm{W} \) takes the form in \eqref{eq:simplification}.
 Note that \( f(\sigma_i; y_{\pi(i)}) = 0 \) for each $i \in [r_{\sigma}]$ according to \eqref{set:newb}. This, together with \( \sigma_i \in \mathcal{S}_1 \cup \mathcal{S}_3 \) for all \( i \in [r_\sigma] \), $\sigma_1\ge \dots \ge \sigma_{r_{\sigma}}$, and {  \Cref{lem:derprop}(ii) and (v)}, yields \( y_{\pi(1)} \geq \cdots \geq y_{\pi(r_\sigma)} \).
 We claim that there exist $i, j$ such that $i \leq r_\sigma < j$ and $y_{\pi(i)} < y_{\pi(j)}$. Now, we prove the claim by contradiction. Suppose that the claim does not hold. This implies $y_{\pi(i)}\geq y_{\pi(j)}$ for all $i \in [r_\sigma]$ and all $j \in \{r_\sigma + 1,\dots,d_Y\}$.  Therefore, $y_{\pi(1)}, \ldots, y_{\pi(r_\sigma)}$ must be the $r_\sigma$ largest entries of $\bm y$. This, together with \( y_{\pi(1)} \geq \cdots \geq y_{\pi(r_\sigma)} \), implies $(y_{\pi(1)}, \ldots, y_{\pi(r_\sigma)}) = (y_1, \ldots, y_{r_\sigma})$, which contradicts $(y_{\pi(1)}, \ldots, y_{\pi(r_\sigma)})\neq(y_1, \ldots, y_{r_\sigma})$.
Next, we construct a descent direction $\bm{\Delta} = (\bm{\Delta}_1, \ldots, \bm{\Delta}_L) \in \R^{d_1 \times d_0} \times \dots \times \R^{d_L \times d_{L-1}}$ as follows:
\[
    \bm{\Delta}_1 = \bm{E}_1\, \mathrm{BlkD}(\bm{\Pi}, \bm{I}), \quad
    \bm{\Delta}_l = \bm{0},\; l = 2, \ldots, L-1,\quad
    \bm{\Delta}_L = \mathrm{BlkD}(\bm{\Pi}^T, \bm{I})\, \bm{E}_L,
\]
where $\bm{E}_1 \in \R^{d_1 \times d_0}$ has a 1 at entry $(i,j)$ and 0 elsewhere, and $\bm{E}_L \in \R^{d_L \times d_{L-1}}$ has a 1 at entry $(j,i)$ and 0 elsewhere.
    Using this, \eqref{eq:simplification}, and \eqref{eq:calculation}, we compute
    \begin{align*}
        & G(\bm{W} + t\bm{\Delta})
         \!=\! \left\|(\bm{\Sigma}_L + t\bm{E}_L) \bm \Sigma_{(L-1):2}(\bm{\Sigma}_1 + t\bm{E}_1) - \sqrt{\lambda}\widehat{\bm Y} \right\|_F^2 + \lambda\sum_{l=1}^L\|\bmw_l+t\bm\Delta_l\|_F^2\\
        &= G(\bmw) + \left\| t\bm E_L\bm \Sigma_{(L-1):1}+t\bm \Sigma_{L:2}\bm E_1 + t^2 \bm E_L \bm \Sigma_{(L-1):2} \bm E_1\right\|_F^2 +2\lambda t^2\\
        &\quad + 2\left\langle \bm\Sigma_{L:1}-\sqrt{\lambda}\widehat{\bm Y},  t\bm E_L\bm \Sigma_{(L-1):1}+t\bm \Sigma_{L:2}\bm E_1 + t^2 \bm E_L \bm \Sigma_{(L-1):2}\bm E_1
       \right \rangle\\
        &= G(\bmw)+2\left(\sigma_{i}^{2L-2} + \lambda -  \sqrt{\lambda}y_{\pi(j)}\sigma_{i}^{L-2}\right)t^2 +2\sigma_j^{L}\sigma_i^{L-2}t^2+ \sigma_i^{2L-4}t^4\\
        &= G(\bmw) + 2\sqrt{\lambda}\sigma_{i}^{L-2}(y_{\pi(i)} - y_{\pi(j)})t^2 + \sigma_i^{2L-4}t^4,
    \end{align*}
where the final equality follows from $f(\sigma_i,y_{\pi(i)})=0$ and $\sigma_j =0$ for $j>r_\sigma$.
This, together with \eqref{eq:defhessian} and $\nabla G(\bm{W}) = \bm{0}$, yields
$
    \nabla^2 G(\bm{W})[\bm{\Delta}, \bm{\Delta}] = 4\sqrt{\lambda}\,\sigma_{i}^{L-2}(y_{\pi(i)} - y_{\pi(j)}) < 0
$
due to $y_{\pi(i)} < y_{\pi(j)}$. Thus, $\bm{W}$ is a strict saddle point.
\end{proof}
Now, we give a sufficient condition for $\bm{W}\!\in\! \mathcal{W}_{(\bm\sigma,\bm \Pi)}$ to be a local minimizer.
To proceed, we define the following auxiliary function:
\begin{align}\label{eq:defg}
    g(x; y) := (x^L - \sqrt{\lambda} y)^2 + \lambda L x^2.
\end{align}
\vspace{-0.25in}
\begin{proposition}\label{prop:l3local}
Suppose that $L \ge 3$ and $\bm Y$ is defined in \eqref{eq:Y}. Let $(\bm{\sigma}, \bm{\Pi}) \in \mathcal{B}$ be arbitrary and $\bm{W} = (\bm{W}_1, \ldots, \bm{W}_L) \in \mathcal{W}_{(\bm{\sigma}, \bm{\Pi})}$ be any critical point, and define $r_{\sigma} := \|\bm \sigma\|_0$. If $\sigma_i \in \mathcal{S}_1$ for all $i \in [r_\sigma]$ and $(y_{\pi(1)},\ldots,y_{\pi(r_\sigma)}) = (y_1,\ldots,y_{r_\sigma})$, then $\bm{W}$ is a local minimizer.
\end{proposition}
\begin{proof}
Consider that \( \bm{W} \) takes the form in \eqref{eq:simplification}.
Note that $\blk(\bm{\Pi}, \bm{I}_{d_0 - d_Y})$ and $\blk(\bm{\Pi}^T, \bm{I}_{d_L - d_Y})$ are invertible.
Thus, for any point $\widehat{\bm{W}} = (\widehat{\bm{W}}_1, \dots, \widehat{\bm{W}}_L)$ in a neighborhood of $\bm{W}$, there exists $(\bm{\Delta}_1, \ldots, \bm{\Delta}_L) \in \R^{d_1 \times d_0} \times \dots \times \R^{d_L \times d_{L-1}}$ such that
\begin{align*}
& \widehat{\bmw}_1 = (\bm \Sigma_1+\bm\Delta_1) \blk(\bm \Pi, \bm I),\ \widehat{\bm W}_L = \blk(\bm \Pi^T, \bm I)(\bm \Sigma_L+\bm\Delta_L), \\
& \widehat{\bm W}_l = \bm\Sigma_l+\bm\Delta_l,\ l=2,\dots,L-1.
\end{align*}
    Using \eqref{eq:simplification}, \eqref{eq:Yhat}, and \eqref{eq:calculation}, we compute
    \begin{align}\label{eq1:prop l3local}
        G(\bm W) &
        =\sum_{i=1}^{r_{\sigma}}\left((\sigma_i^L -\sqrt{\lambda} y_{\pi(i)})^2 +\lambda L\sigma_i^2\right) + \lambda \sum_{i=r_{\sigma}+1}^{d_{Y}}y_{\pi(i)}^2 \notag \\
        & = \sum_{i=1}^{r_{\sigma}}\left((\sigma_i^L -\sqrt{\lambda} y_i)^2 +\lambda L\sigma_i^2\right) + \lambda \sum_{i=r_{\sigma}+1}^{d_{Y}}y_i^2
        = \sum_{i=1}^{r_{\sigma}}g(\sigma_i;y_i
        )+\sum_{i= r_{\sigma}+1}^{d_Y} g(0;y_i),
      \end{align}
    where the second equality uses $(y_{\pi(1)},\ldots,y_{\pi(r_\sigma)}) = (y_1,\ldots,y_{r_\sigma})$ and the last equality is due to \eqref{eq:defg}. For ease of exposition, we define $\widetilde{\bm W} := (\bm \Sigma_L + \bm \Delta_L)\cdots(\bm\Sigma_1+\bm\Delta_1)$. Applying \Cref{lem:mirsky} to $\prod_{l=L}^1(\bm \Sigma_l + \bm \Delta_l) -\sqrt{\lambda} \widehat{\bm Y}$ yields
    \begin{align}\label{eq5:prop l3local}
        \left\|\prod_{l=L}^1(\bm \Sigma_l + \bm \Delta_l) -\sqrt{\lambda} \widehat{\bm Y}\right\|_F^2 &\ge \sum_{i=1}^{d_Y} \left(\sigma_i(\widetilde{\bmw})-\sqrt{\lambda}y_i\right)^2.
    \end{align}
    Applying \Cref{lem:schattenp} to $\sum_{l=1}^{L} \left\|\bm\Sigma_l + \bm\Delta_l \right\|_F^2$ with $p \!=\!2/L$ and $p_l \!=\! 2$ for $l \in [L]$ yields
    \begin{align}\label{eq6:prop l3local}
        \sum_{l=1}^{L} \left\|\bm\Sigma_l + \bm\Delta_l \right\|_F^2 \ge L \sum_{i=1}^{d_Y}\sigma_i^{\frac{2}{L}}(\widetilde{\bmw}),
    \end{align}
    where the Frobenius norm is the Schatten-2 norm.
     Now, we compute
    \begin{align}\label{eq2:prop l3local}
        G(\widehat{\bmw})
        &= \left\| \prod_{l=L}^1(\bm \Sigma_l + \bm \Delta_l) -\sqrt{\lambda} \widehat{\bm Y} \right\|_F^2 + \lambda \sum_{l=1}^{L} \left\|\bm\Sigma_l + \bm\Delta_l \right\|_F^2 \notag\\
        & \overset{(\ref{eq5:prop l3local}, \ref{eq6:prop l3local})}{\ge} \sum_{i=1}^{d_Y} \left(\left(\sigma_i(\widetilde{\bmw})-\sqrt{\lambda}y_i\right)^2  + \lambda L \sigma_i^{\frac{2}{L}}(\widetilde{\bmw})\right) = \sum_{i=1}^{d_Y}g\left(\sigma_i(\widetilde{\bmw})^{\frac{1}{L}}; y_i\right).
    \end{align}
    We note that
    $ \partial_x g(\sigma_i; y_i)= 2L f(\sigma_i;y_i) = 0 $ and $\sigma_i \in \mathcal{S}_1$ for all $i\in [r_\sigma]$.
    This, together with \Cref{coro:completelocal}, yields that $ \sigma_i $ is a local minimum of $g(x; y_i)$ for all $ i \in [r_\sigma]$ and $ 0$ is a local minimum of $ g(x; y_i)$ for any $i=r_\sigma + 1,\dots, d_Y$.
    This implies for each $i \in [d_{Y}]$, there exists a $\delta_i > 0$ such that
    \begin{align}\label{eq3:prop l3local}
    \begin{cases}
            &g(\sigma_i;y_i) \le g(x;y_i),\ \forall x\ \text{satisfies}\ |x - \sigma_i| \le \delta_i,\ \forall i \in [r_{\sigma}],\\
        &g(0;y_i)\le g(x;y_i),\ \forall x\  \text{satisfies}\ |x|\le \delta_i,\ \forall i \in \{r_{\sigma}+1,\dots,d_Y\}.
    \end{cases}
    \end{align}
    Let $\bm \Delta$ be such that for each $l \in [L]$,
    \begin{equation}
    \label{eq:delta_condition}
    \|\bm \Delta_l\| \le\min \left\{ \frac{\sigma_{r_\sigma}^{L-1}\min\{\delta_j: j \in [r_\sigma]\}}{L (\sigma_{1}+1)^{L-1}},\frac{\min\{\delta_j^{L}:j = r_{\sigma}+1,\dots,d_{Y}\}}{L (\sigma_{1}+1)^{L-1}}, 1\right\},
    \end{equation}
    Using Weyl's inequality, we have
    \begin{align}
    \label{eq:sigma_inequality}
        & |\sigma_i(\widetilde{\bm W})-\sigma_i^L| \le \|\widetilde{\bm W} -\bm W_L \cdots\bmw_1\| \le\|\bm \Delta_L(\bm \bmw_{L-1}+\bm \Delta_{L-1})\cdots(\bm W_1+\bm \Delta_1)\|\notag\\
        &\quad + \|\bm W_L\bm \Delta_{L-1}(\bm W_{L-2}+\bm \Delta_{L-2})\cdots(\bmw_1+\bm \Delta_1)\| +\cdots+\|\bm W_L\cdots \bm W_{2} \bm \Delta_1\|\notag\\
        & \le \sum_{l=1}^L \left(\prod_{i=1,i\neq l
        }^L(\|\bmw_i\|+\|\bm\Delta_i\|)\right)\|\bm \Delta_l\|  \overset{\eqref{eq:delta_condition}}{\le} (\sigma_{1}+1)^{L-1}\sum_{l=1}^L \|\bm \Delta_l\|,
    \end{align}
    where the second inequality uses the triangular inequality.
    Furthermore, we note that
    \begin{align*}
        |\sigma_i(\widetilde{\bm W})-\sigma_i^L|=\left|\sigma_i^{\frac{1}{L}}(\widetilde{\bmw}) -\sigma_i\right| \left(\sigma_i^{\frac{L-1}{L}}(\widetilde{\bmw})+\sigma_i^{\frac{L-2}{L}}(\widetilde{\bmw})\sigma_i+\cdots+\sigma_i^{L-1}\right),\ \forall i\in [r_\sigma].
    \end{align*}
    Using this and $\sigma_i=0$ for all $i \in \{r_{\sigma}+1,\dots,d_Y\}$, we have
    \begin{align}
        |\sigma_i^{\frac{1}{L}}(\widetilde{\bmw}) -\sigma_i| \le \frac{|\sigma_i(\widetilde{\bm W})-\sigma_i^L|}{\sigma^{L-1}_{i}}\overset{\eqref{eq:sigma_inequality}}{\le}\left(\frac{(\sigma_{1}+1)^{L-1}}{\sigma_{r_\sigma}^{L-1}}\right) \sum_{l=1}^L \|\bm \Delta_l\|\overset{\eqref{eq:delta_condition}}{\le} \delta_i,\ \forall i \in [r_\sigma],\notag \\
        \sigma_i^{\frac{1}{L}}(\widetilde{\bmw}) \overset{\eqref{eq:sigma_inequality}}{\le} { \left((\sigma_{1}+1)^{L-1}\sum_{l=1}^L \|\bm \Delta_l\|\right)^{\frac{1}{L}}}\overset{\eqref{eq:delta_condition}}{\le} \delta_i,\ \forall i \in \{r_{
        \sigma
        }+1,\dots,d_Y\}. \label{eq4:prop l3local}
    \end{align}
     Then, we have
    \begin{align*}
       G(\widehat{\bmw}) &\overset{\eqref{eq2:prop l3local}}{\ge} \sum_{i=1}^{d_{Y}} g( \sigma_i^{\frac{1}{L}}(\widetilde{\bmw}); y_i )  \overset{(\ref{eq3:prop l3local},\ref{eq4:prop l3local})}{\ge}\sum_{i=1}^{d_Y} g( \sigma_i; y_i ) \\
       &=\sum_{i=1}^{r_{\sigma}} g( \sigma_i; y_{\pi(i)})+\sum_{i=r_\sigma+1}^{d_Y} g(0;y_{i})\overset{\eqref{eq1:prop l3local}}{=} G(\bm W),
    \end{align*}
    Since $\bm\Delta$ can be any small perturbation, we obtain that $\bm W$ is a local minimizer.
\end{proof}

For $L \geq 3$, we show that there exists $i \in [d_Y]$ such that $f(\sigma_i; y_{\pi(i)}) = 0$ has a root $\sigma_i \in \mathcal{S}_3$ if and only if $\lambda$ attains a special value.
\begin{lemma}\label{lem:illcase}
Suppose that \( L \geq 3 \) and $\bm Y$ is defined in \eqref{eq:Y}. There exists \( (\bm{\sigma}, \bm{\Pi}) \in \mathcal{B} \) and $i\in [r_\sigma]$ such that \( \sigma_i \in \mathcal{S}_3 \) if and only if it holds that
\begin{align}\label{eq:illcase1}
\exists j \in [d_Y],\ \lambda = y_j^{2(L-1)}\left( \left( \frac{L-2}{L} \right)^{\frac{L}{2(L-1)}} + \left( \frac{L}{L-2} \right)^{\frac{(L-2)}{2(L-1)}} \right)^{-2(L-1)}.
\end{align}
\end{lemma}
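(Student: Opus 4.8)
The plan is to translate ``$\sigma_i \in \mathcal{S}_3$'' into the statement that the polynomial $f(\cdot\,;y_j)$ in \eqref{eq:deffty} has a \emph{repeated} positive root for some $j \in [d_Y]$, solve the associated two–equation system, and check that the forced value of $\lambda$ is exactly the one appearing in \eqref{eq:illcase1}. Everything beyond that is a bookkeeping argument turning ``$\mathcal{S}_3 \neq \emptyset$'' into the existence of a genuine pair $(\bm\sigma,\bm\Pi)\in\mathcal{B}$.

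First I would reduce to a cleaner polynomial. For $x>0$, dividing $f(x;y)$ by $x$ shows $f(x;y)=0 \iff \phi(x;y):=x^{2L-2}-\sqrt{\lambda}\,y\,x^{L-2}+\lambda=0$, which is a genuine polynomial since $L\ge 3$. When $y>0$, $\partial_x\phi(x;y)=x^{L-3}\big((2L-2)x^{L}-(L-2)\sqrt{\lambda}\,y\big)$ has a single sign change on $(0,\infty)$, so $\phi(\cdot\,;y)$ is strictly decreasing and then strictly increasing there; hence it has at most two positive roots and a unique minimizer $x^{*}(y)$. Combining this with the three–case classification recalled before \eqref{eq:defsetgamma} and with \Cref{lem:derprop}(iii), the case ``$f(\cdot\,;y_j)=0$ has exactly one positive root'' — equivalently $\hat x(y_j)$ exists and belongs to $\mathcal{S}_3$ — occurs precisely when that root equals $x^{*}(y_j)$ and $\phi(x^{*}(y_j);y_j)=0$, i.e.\ when the system $\phi(x_0;y_j)=\partial_x\phi(x_0;y_j)=0$ admits a solution $x_0>0$. (When $y_j=0$, $\phi(\cdot\,;0)>0$ on $(0,\infty)$, so such $j$ never contributes; consistently, \eqref{eq:illcase1} with $y_j=0$ would force $\lambda=0$, which is impossible.)

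Next I would solve that system. From $\partial_x\phi(x_0;y_j)=0$ with $x_0>0$ we get $x_0^{L}=\tfrac{(L-2)\sqrt{\lambda}\,y_j}{2(L-1)}$; feeding $\sqrt{\lambda}\,y_j=\tfrac{2(L-1)}{L-2}x_0^{L}$ back into $\phi(x_0;y_j)=0$ collapses the two terms of degree $2L-2$ and gives $x_0^{2(L-1)}=\tfrac{(L-2)\lambda}{L}$. Eliminating $x_0$ between the two relations yields $x_0^{2}=\tfrac{L(L-2)}{4(L-1)^{2}}\,y_j^{2}$ and therefore
\[
\lambda \;=\; \frac{L}{L-2}\,x_0^{2(L-1)} \;=\; \frac{L^{L}(L-2)^{L-2}}{\big(2(L-1)\big)^{2(L-1)}}\,y_j^{2(L-1)} .
\]
It then remains to match this with the stated expression, i.e.\ to verify $\big(\big(\tfrac{L-2}{L}\big)^{\frac{L}{2(L-1)}}+\big(\tfrac{L}{L-2}\big)^{\frac{L-2}{2(L-1)}}\big)^{-2(L-1)}=\tfrac{L^{L}(L-2)^{L-2}}{(2(L-1))^{2(L-1)}}$: writing $A$ for the inner sum and factoring out $(L-2)^{-\frac{L-2}{2(L-1)}}L^{-\frac{L}{2(L-1)}}$, the two summands become $L-2$ and $L$, so $A=2(L-1)\,(L-2)^{-\frac{L-2}{2(L-1)}}L^{-\frac{L}{2(L-1)}}$ and $A^{-2(L-1)}$ is exactly the claimed scalar. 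This closes both implications at once: the ``only if'' direction reads $\lambda$ off from the display, and for ``if'' one checks that when \eqref{eq:illcase1} holds for some $j$, the minimum value of $\phi(\cdot\,;y_j)$ is $0$, so $x^{*}(y_j)=\hat x(y_j)\in\mathcal{S}_3$.

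Finally I would upgrade ``$\mathcal{S}_3\neq\emptyset$'' to the statement in the lemma. The forward direction is trivial: if $(\bm\sigma,\bm\Pi)\in\mathcal{B}$ has $\sigma_i\in\mathcal{S}_3$ for some $i\le r_\sigma$, then $\mathcal{S}_3\neq\emptyset$ and the previous paragraph applies. For the reverse direction, given $j$ with $\hat x(y_j)\in\mathcal{S}_3$, let $\bm a\in\R^{d_Y}$ have $a_j=\hat x(y_j)$ and all other entries $0$; since $f(0;y_k)=0$ for every $k$ and $d_{\min}\ge 1$, we have $\bm a\in\mathcal{A}$ of \eqref{set:A}, and sorting $\bm a$ produces $(\bm\sigma,\bm\Pi)\in\mathcal{B}$ of \eqref{set:B} with $r_\sigma=1$ and $\sigma_1=\hat x(y_j)\in\mathcal{S}_3$. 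The main obstacle is the middle step — carrying out the elimination of $x_0$ from the two polynomial relations and verifying the power-of-$(L-1)$ identity — while the rest is routine once the shape of $\phi$ and \Cref{lem:derprop} are in hand.
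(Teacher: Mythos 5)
Your proposal is correct and follows essentially the same route as the paper: you characterize $\sigma_i\in\mathcal{S}_3$ by the double-root system $f(x_0;y_j)=\partial_x f(x_0;y_j)=0$ (the paper via \Cref{lem:derprop}(iii)), eliminate $x_0$ to pin down $\lambda$ (the paper's \Cref{lem:2eqs}, which you re-derive inline through $\phi=f/x$ and additionally verify the constant identity the paper leaves implicit), and for the converse build $(\bm\sigma,\bm\Pi)\in\mathcal{B}$ with $\bm\sigma=(\hat{x}(y_j),0,\dots,0)$ and a transposition, exactly as in the paper. All steps check out, including the elimination $x_0^{2(L-1)}=\tfrac{(L-2)\lambda}{L}$ and the resulting expression for $\lambda$.
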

\begin{proof}
Suppose that there exists \( (\bm{\sigma}, \bm{\Pi}) \in \mathcal{B} \) and $i\in [r_\sigma]$ such that \( \sigma_i \in \mathcal{S}_3 \). According to \eqref{set:newb} and { \Cref{lem:derprop}}(iv), we have
\begin{subequations}\label{eq:f}
\begin{align}
    f(\sigma_i; y_{\pi(i)}) &= \sigma_i^{2L-1} - \sqrt{\lambda}y_{\pi(i)} \sigma^{L-1}_i+\lambda \sigma_i=0,\label{eq:f=0}\\
    \partial_x f(\sigma_i;y_{\pi(i)}) &=(2L -1)\sigma_i^{2L-2}-\sqrt{\lambda}(L-1)y_{\pi(i)}\sigma_i^{L-2}+\lambda=0.\label{eq:fprime=0}
\end{align}
\end{subequations}
   Applying \Cref{lem:2eqs} to solve the above equations yields
    \[
        y_{\pi(i)}= \left( \left( \frac{L-2}{L} \right)^{\frac{L}{2(L-1)}} + \left( \frac{L}{L-2} \right)^{\frac{(L-2)}{2(L-1)}} \right)  \lambda^{\frac{1}{2(L-1)}}.
    \]
   This implies that \eqref{eq:illcase1} holds.
    Conversely, suppose that \eqref{eq:illcase1} holds. It is obvious that $x^* = \left(\tfrac{\lambda(L-2)}{L}\right)^{{1}/{(2L-2)}} $ satisfies the following system
    \begin{align*}
    \begin{cases}
        & f(x; y_j) = x^{2L-1} - \sqrt{\lambda} y_j x^{L-1} + \lambda x = 0,\\
        & \partial_x f(x; y_j) = (2L-1) x^{2L-2} - \sqrt{\lambda} (L-1) y_j x^{L-2} + \lambda = 0,
    \end{cases}
    \end{align*}
    which implies that $x^*\in \mathcal{S}_3$.
We choose $\bm{\sigma} = (x^*, 0, \ldots, 0)$ and let $\bm{\Pi} \in \mathcal{P}^{d_{Y}}$ be a permutation matrix that corresponds to the permutation $\pi:[d_Y] \to [d_Y]$ defined as follows: $\pi(j) = 1,\ \pi(1) = j,\ \pi(k) = k,\ \text{for all } k \in [d_Y] \setminus \{1, j\}.$
According to \eqref{set:newb}, it is straightforward to verify $(\bm{\sigma}, \bm{\Pi}) \in \mathcal{B}$.
\end{proof}
Now, we present a condition under which Problem \eqref{eq:G} has a non-strict saddle point.
\begin{proposition}\label{prop:gamma3}
Suppose that \( L \geq 3 \) and $\bm Y$ is defined in \eqref{eq:Y}. Let \( (\bm{\sigma}, \bm{\Pi}) \in \mathcal{B} \) and $\bm{W} = (\bm{W}_1, \ldots, \bm{W}_L)$ $\in \mathcal{W}_{(\bm{\sigma}, \bm{\Pi})}$ be arbitrary and $r_{\sigma} := \|\bm \sigma\|_0$. If we have \( (y_{\pi(1)}, \ldots, y_{\pi(r_\sigma)}) = (y_1, \ldots, y_{r_\sigma}) \) and there exists \( i \in [r_\sigma] \) such that \( \sigma_i \in \mathcal{S}_3 \), then \( \bm{W} \) is a non-strict saddle point.
\end{proposition}
\begin{proof}
Consider that \( \bm{W} \) takes the form in \eqref{eq:simplification}.
{ Using \Cref{lem:derprop}(iv) and (v), we know that $\mathcal{S}_3 = \{x_*\}$, where $x_*$ is defined in \eqref{eq:x*}, and $z_1 > x_* > z_2$ for any $z_1 \in \mathcal{S}_1$ and $z_2 \in \mathcal{S}_2$.
Using this and the fact that $\bm{\sigma}$ is sorted in nonincreasing order, we conclude that there exist indices $p$ and $q$ satisfying $0 \le p < i \le q \le r_\sigma$ and
\[
\sigma_j \in \mathcal{S}_1 \quad \text{for } j = 1,\dots,p, \quad
\sigma_{p+1} = \cdots = \sigma_q \in \mathcal{S}_3, \quad
\sigma_j \in \mathcal{S}_2 \quad \text{for } j = q+1,\dots,r_\sigma.
\]
We now claim that $q = r_\sigma$, i.e., no element of $\bm{\sigma}$ belongs to $\mathcal{S}_2$. Suppose, for the sake of contradiction, that this is not the case.  Then $\sigma_{r_\sigma} \in \mathcal{S}_2$.
Since $(y_{\pi(1)},\ldots,y_{\pi(r_\sigma)}) = (y_1,\ldots,y_{r_\sigma})$ and $(\bm{\sigma},\bm{\Pi}) \in \mathcal{B}$ (see \eqref{set:newb}), we have
\[
f(\sigma_i; y_i) = f(\sigma_{r_\sigma}; y_{r_\sigma}) = 0.
\]
However, by \Cref{lem:derprop}(i) and (iv), $\sigma_i \in \mathcal{S}_3$ implies $y_i = y_*$. By \Cref{lem:derprop}(i), $\sigma_{r_\sigma} \in \mathcal{S}_2$ implies that $f(x; y_{r_\sigma}) = 0$ has two distinct positive roots and thus $y_{r_\sigma} > y_*$, which, together with $y_i = y_*$, contradicts $y_i \ge y_{r_\sigma}$. Hence, we must have $q = r_\sigma$.
Consequently, we obtain that there exists $0\le p < i\le r_\sigma$ such that  the sequence is partitioned as
\[\sigma_j \in \mathcal{S}_1 \ \text{for } j = 1,\dots,p, \quad
\sigma_{p+1} = \cdots = \sigma_{r_\sigma} \in \mathcal{S}_3.\footnote{  Note that when $p=0$, this reduces to $\sigma_1=\cdots=\sigma_{r_\sigma} \in \mathcal{S}_3$.} \]
}
    Now, we construct a direction $\bm \Delta = \left(\bm\Delta_1 \blk(\bm \Pi, \bm I), \bm\Delta_2, \dots, \bm\Delta_{L-1},\blk(\bm \Pi^T\!\!, \bm I)\bm \Delta_L\right)$, where $ (\bm{\Delta}_1, \ldots, \bm{\Delta}_L) \in \R^{d_1\times d_0} \times \dots \times \R^{d_L\times d_{L-1}}$ and $\|\bm\Delta_l\|\le 1$ for each $l \in [L]$.
    Let $\widehat{\bm W} := \bm W+t\bm \Delta$, which implies
    \begin{subequations}
    \begin{align}\notag
    & \widehat{\bmw}_1 = (\bm \Sigma_1+t\bm\Delta_1) \blk(\bm \Pi, \bm I),\ \widehat{\bm W}_L = \blk(\bm \Pi^T, \bm I)(\bm \Sigma_L+t\bm\Delta_L),\\
    & \widehat{\bm W}_l = \bm\Sigma_l+t\bm\Delta_l,\ l = 2,\dots,L-1. \notag
    \end{align}
    \end{subequations}
    For ease of exposition, we define $\widetilde{\bm W} : = (\bm \Sigma_L +t \bm \Delta_L)\cdots(\bm\Sigma_1+t\bm\Delta_1)$. Using a similar computation in \eqref{eq1:prop l3local} and \eqref{eq2:prop l3local}, we obtain
    \begin{align}\label{eq3:prop gamma3}
        &G(\widehat{\bmw})-G(\bmw)
       \ge  \sum_{i=1}^{d_Y} g\left(\sigma_i(\widetilde{\bmw})^{\frac{1}{L}};y_i\right) - \sum_{i=1}^{r_{\sigma}}g(\sigma_i;y_i
        ) - \sum_{i= r_{\sigma}+1}^{d_Y} g(0;y_i).
    \end{align}
By \Cref{coro:completelocal}, $\sigma_i$ is a local minimizer of $g(x; y_i)$ for all $i \in [p]$, and $0$ is a local minimizer of $g(x;y_i)$ for all $i \in [d_Y]$. For sufficiently small $t$ such that $t\bm \Delta_l$ meets the condition in \eqref{eq:delta_condition} for all $l \in [L]$, noting that $p\le r_\sigma$, we have
\begin{align*}
    g(\sigma_i(\widetilde{\bmw})^{\frac{1}{L}};y_i) \ge g(\sigma_i;y_i),\ \forall i \in [p];\ g(\sigma_i(\widetilde{\bmw})^{\frac{1}{L}};y_i)\ge g(0;y_i),\ \forall i\in \{r_\sigma+1,\dots,d_Y\}.
\end{align*}
This, together with \eqref{eq3:prop gamma3}, implies
\begin{align}
\notag G(\widehat{\bmw})-G(\bmw)
&\ge \sum_{i=1}^{r_\sigma}\left(g(\sigma_i(\widetilde{\bmw})^{\frac{1}{L}};y_i)-g(\sigma_i;y_i)\right)
\!+\!\!\sum_{i=r_{\sigma}+1}^{d_{Y}}\!\!\left(g(\sigma_i(\widetilde{\bmw})^{\frac{1}{L}};y_i)-g(0;y_i)\right)\\
&\ge \sum_{i=p+1}^{r_{\sigma}}\left(g(\sigma_i(\widetilde{\bmw})^{\frac{1}{L}};y_i)-g(\sigma_i;y_i)\right).
\notag
    \end{align}
For ease of exposition, we define \( \tau_i := \sigma_i(\widetilde{\bm W}) - \sigma_i^L \) for each \( i \in \{p+1, \dots, r_{\sigma}\} \).
Note that \eqref{eq:sigma_inequality} still holds and thus \( \tau_i \) can be arbitrarily small as $t\to 0$ for each such \( i \).
 According to \eqref{eq:defg}, for all $i \in \{p+1,\dots,r_{\sigma}\}$ and sufficiently small $t>0$, we have
    \begin{align}
&\ g(\sigma_i(\widetilde{\bmw})^{\frac{1}{L}};y_i)-g(\sigma_i;y_i) \notag\\
        =&\ \left(\sigma_i(\widetilde{\bm W})-\sqrt{\lambda}y_i\right)^2 +\lambda L \sigma_i^{{2}/{L}}(\widetilde{\bm W}) - \left(\sigma_i^L - \sqrt{\lambda}y_i\right)^2 - \lambda L \sigma_i^2 \notag\\
        =&\ \left(\tau_i +\sigma_i^L - \sqrt{\lambda}y_i\right)^2 +\lambda L \left(\tau_i +\sigma_i^L\right)^{{2}/{L}}- \left(\sigma_i^L - \sqrt{\lambda}y_i\right)^2 - \lambda L \sigma_i^2\notag\\
        =&\ \tau_i^2+2\tau_i(\sigma_i^L - \sqrt{\lambda}y_i) +\lambda L \sigma_i^2\left(\left(1 + \tfrac{\tau_i}{\sigma_i^L}\right)^{{2}/{L}}-1\right)\notag\\
        \ge&\
        \tau_i^2+2\tau_i(\sigma_i^L - \sqrt{\lambda}y_i) +\lambda L \sigma_i^2\left(\tfrac{2\tau_i}{L\sigma_i^L}+\tfrac{2-L}{L^2}\left(\tfrac{\tau_i}{\sigma_i^L}\right)^2-\tfrac{16(L-2)(L-1)}{3L^3}\left|\tfrac{\tau_i}{\sigma_i^L}\right|^3\right)\notag\\
        =&\ 2\tau_i(\sigma^L_i - \sqrt{\lambda}y_i+\lambda \sigma_{i}^{2-L}) +\tau_i^2\left(1+\tfrac{\lambda(2-L)}{L\sigma_i^{2L -2}}\right)-\tfrac{16(L-2)(L-1)\lambda}{3L^2 \sigma_i^{3L-2}}|\tau_i|^3,\label{eq:getp+1g}
    \end{align}
    where the inequality follows from \Cref{lem:inequality_alpha}. Note that as $\sigma_i\in \mathcal S_3$ for $i=p+1,\dots,r_\sigma$, \eqref{eq:f=0} and \eqref{eq:fprime=0} hold.
   Eliminating $y_{\pi(i)}$ in  \eqref{eq:f=0} and \eqref{eq:fprime=0}, we have $ L\sigma_i^{2L-1} =\lambda (L-2)\sigma_i,\ \text{for each}\  i \in \{p+1,\dots,r_{\sigma}\}$.
Using $y_i=y_{\pi(i)},~i=1,\dots,r_\sigma$ and \eqref{eq:f=0}, we obtain
\begin{equation}\label{eq:yip}
\sigma^L_i - \sqrt{\lambda}y_i+\lambda \sigma_{i}^{2-L}=0,~i=p+1,\dots,r_\sigma.
\end{equation}
Using the above two facts and \eqref{eq:getp+1g}, we have
\begin{align}\label{eq:finalpart}
    G(\widehat{\bmw})-G(\bmw) \ge - \sum_{i=p+1}^{r_\sigma}\frac{ 16(L-2)(L-1)\lambda}{3L^2 \sigma_i^{3L-2}}|\tau_i|^3.
    \end{align}
Using \eqref{eq:sigma_inequality} and $\|\bm{\Delta}_l\| \le 1$ for all $l \in [L]$, we immediately have $|\tau_i|\le L(\sigma_1+1)^{L-1}|t|$ for all $ i=p+1,\ldots,r_{\sigma}$.
   Substituting this into \eqref{eq:finalpart} yields
    \begin{align}
        \label{eq:trilowbound}
           G(\widehat{\bmw}) - G(\bmw)\! \ge\! -\!\! \sum_{i=p+1}^{r_\sigma}\frac{16(L-2)(L-1)\lambda L(\sigma_1+1)^{3L-3}}{3 \sigma_i^{3L-2}}|t|^3.
    \end{align}
    We claim that $\bm{W}$ is a second-order critical point. Indeed, if this were not the case, then by \Cref{def:crit}, there would exist a direction $\bm{\Delta}$ and sufficiently small $t > 0$ such that
\[
G(\bm{W} + t \bm{\Delta}) - G(\bm{W}) \le -c t^2
\]
for some constant $c > 0$, which contradicts \eqref{eq:trilowbound}.

To show that \( \bm W \) is a non-strict saddle point, we now construct a direction \( \bm \Delta = (\bm \Delta_1, \ldots, \bm \Delta_L)  \in \R^{d_1 \times d_0} \times \dots \times \R^{d_L \times d_{L-1}} \), where the $(m,n)$-th entry of $\bm \Delta_l$ is $1$ if $(m,n) = (p+1, p+1)$ and $0$ otherwise.
Thus, we have
\begin{align*}
&G(\widehat{\bmw})-G(\bmw)\\
        =&\left\|\prod_{l=L}^1(\bm \Sigma_l +t \bm \Delta_l)\! -\!\sqrt{\lambda}\widehat{\bm Y}\right\|_F^2 + \lambda \sum_{i=1}^{L}\|\bm\Sigma_i + t\bm\Delta_i\|_F^2 -\|\!\prod_{l=L}^1\bm \Sigma_l -\sqrt{\lambda}\widehat{\bm Y}\|_F^2 -\!\!\ \lambda \sum_{i=1}^{L}\|\bm\Sigma_i \|_F^2\\
        =&\left(\!(\sigma_{p+1}+t)^L\! -\! \sqrt{\lambda}y_{p+1}\right)^2 + \lambda L (\sigma_{p+1}+t)^2 -(\sigma_{p+1}^L\!-\! \sqrt{\lambda}y_{p+1})^2 -\lambda L\sigma_{p+1}^2 \\
        =& 2L(\!\sigma_{p+1}^{2L-1}\!\! -\!\sqrt{\lambda}y_{p+1}\sigma_{p+1}^{L-1} +\lambda \sigma_{p+1}) t\!+\!L\!\left(\!(2L-1)\sigma_{p+1}^{2L-2}-\!\sqrt{\lambda}(L-1)y_{p+1}\sigma^{L -2}_{p+1}\!+\!\lambda\right)t^2\\
        &\quad + \frac{L}{3}\left((2L-1)(2L-2)\sigma^{2L-3}_{p+1} -\sqrt{\lambda}(L-1)(L-2)y_{p+1} \retwo{\sigma^{L-3}_{p+1}} \right)t^3+ O(t^4)\\
        =&\ \frac{L}{3}\left((2L-1)(2L-2)\sigma^{2L-3}_{p+1} -\sqrt{\lambda}(L-1)(L-2)y_{p+1} \retwo{\sigma^{L-3}_{p+1}} \right)t^3 + O(t^4),
\end{align*}
where the last equality uses \eqref{eq:yip} and \eqref{eq:fprime=0} with $y_{\pi(p+1)} = y_{p+1}$. By {\Cref{lem:derprop}(iv)} and \eqref{eq:2der}, we have
\[
    \frac{\partial^2 f(\sigma_{p+1};y_{p+1})}{\partial x^2} = (2L-1)(2L-2)\sigma_{p+1}^{2L-3} - \sqrt{\lambda}(L-1)(L-2)y_{p+1} \sigma_{p+1}^{L-3} >0.
\]
When $t > 0$ (resp., $t < 0$) is sufficiently small, we have $G(\widehat{\bmw}) - G(\bmw)  > 0$ (resp., $G(\widehat{\bmw}) - G(\bmw)  < 0$).
Hence, $\bm{W}$ admits both descent and ascent directions and is thus a non-strict saddle point by \Cref{def:crit}.
\end{proof}
Using Lemma~\ref{lem:nonlocalmax}, Lemma~\ref{lem:illcase}, and Propositions~\ref{prop:gamma2}--\ref{prop:l3local} and~\ref{prop:gamma3}, we obtain the following corollary.
\begin{corollary}\label{coro:min}
    Suppose that $L \ge 3$ and $\bm Y$ is defined in \eqref{eq:Y}. Let $(\bm{\sigma}, \bm{\Pi}) \in \mathcal{B}$ be arbitrary and $\bm{W} = (\bm{W}_1, \ldots, \bm{W}_L) \in \mathcal{W}_{(\bm{\sigma}, \bm{\Pi})}$ be a critical point, and define $r_\sigma := \|\bm{\sigma}\|_0$.  Then $\bm{W}$ is a local minimizer if and only if $\sigma_i \in \mathcal{S}_1$ for all $i \in [r_{\sigma}]$ and $(y_{\pi(1)}, \ldots, y_{\pi(r_\sigma)}) = (y_1, \ldots, y_{r_\sigma})$.
\end{corollary}
Using \eqref{eq:defg} and the definition of $\mathcal{H}$ in \eqref{eq:defopt}, one verifies that for each $\bm{\sigma} \in \mathcal{H}$,
\begin{align}\label{eq:partial}
  2L\, f(\sigma_i; y_i) = \partial_x g(\sigma_i; y_i) = 0,\ \forall i \in [d_{\min}].
\end{align}
This condition allows us to characterize the global minimizers of Problem~\eqref{eq:G}.
\begin{proposition}\label{prop:opt}
 Suppose that $L \ge 3$ and $\bm Y$ is defined in \eqref{eq:Y}.  Let $(\bm{\sigma}, \bm{\Pi}) \in \mathcal{B}$ be arbitrary and let $\bm{W} = (\bm{W}_1, \ldots, \bm{W}_L) \in \mathcal{W}_{(\bm{\sigma}, \bm{\Pi})}$ be any critical point, and define $r_\sigma := \|\bm{\sigma}\|_0$.
   Then $\bm W$ is a global minimizer of Problem \eqref{eq:G} if and only if $\bm \sigma \in \mathcal{H}$.
\end{proposition}
\begin{proof}
Suppose that $\bm W \in \mathcal{W}_{(\bm{\sigma}, \bm{\Pi})}$ is a global minimizer. According to \Cref{coro:min}, we have $\sigma_i \in \mathcal{S}_1$ for each $i \in [r_{\sigma}]$ and $(y_{\pi(1)}, \ldots, y_{\pi(r_\sigma)}) = (y_1, \ldots, y_{r_\sigma})$.
Let $\bm{\sigma}^* \in \mathcal{H}$ be arbitrary.
Thus,   using \eqref{eq:simplification}, \eqref{eq:Yhat}, and \eqref{eq:calculation}, we have
\begin{align}\label{eq:subf}
    G(\bmw) &= \sum_{i =1}^{r_{\sigma}} \left((\sigma_i^L - \sqrt{\lambda}y_{\pi(i)})^2+ L  \lambda\sigma_i
    ^2\right)+\lambda\sum_{i= r_{\sigma}+1}^{d_{Y}}  y_{\pi(i)}^2\notag \\
    &=\sum_{i =1}^{r_{\sigma}} \left((\sigma_i^L - \sqrt{\lambda}y_{i
    })^2+ L  \lambda\sigma_i
    ^2\right)+\lambda\sum_{i= r_{\sigma}+1}^{d_{Y}}  y_{i}^2 \notag \\
    &= \sum_{i=1}^{r_\sigma} g(\sigma_i;y_i) + \sum_{i = r_{\sigma}+1}^{d_{\min}}g(0;y_i)+\lambda\!\!\!\!\!\sum_{i=d_{\min}+1}^{d_Y}y_i^2 \ge \sum_{i=1}^{d_{\min}} g(\sigma^{*}_i;y_i) +\lambda\sum_{i=d_{\min}+1}^{d_Y}y_i^2,
\end{align}
where
the inequality follows from $\bm \sigma^* \in \mathcal{H}$. By \eqref{eq:partial}, $f(\sigma^{*}_i; y_i) = 0$ for all $i \in [d_{\min}]$. This, together with \eqref{set:newb}, gives $(\bm{\sigma}^*, \bm{I}_{d_Y}) \in \mathcal{B}$.
 For any $\bm{W}^* \in \mathcal{W}_{(\bm{\sigma}^*, \bm{I}_{d_Y})}$, we compute $G(\bm W^*) = \sum_{i=1}^{d_{\min}} g(\sigma^{*}_i; y_i) + \lambda\sum_{i=d_{\min}+1}^{d_Y} y_i^2.$
Since $\bm{W}$ is a global minimizer,
the inequality in \eqref{eq:subf} holds with equality.
Thus, we conclude $\bm {\sigma}\in \mathcal{H}$.

Now, suppose that $\bm \sigma \in \mathcal{H}$. We have
\begin{align*}
    &f(\sigma_i; y_{\pi(i)})
     = \sigma_i^{2L -1} - \sqrt{\lambda}y_{\pi(i)} \sigma_i^{L-1} +\lambda \sigma_i
    \overset{\eqref{set:newb}}{=} 0,\
    \ \forall i \in [r_{\sigma}], \\
    &\partial_xg(\sigma_i ; y_i)
    = 2L(\sigma_i^{2L -1} - \sqrt{\lambda}y_{i} \sigma_i^{L-1} +\lambda \sigma_i) \overset{\eqref{eq:partial}}{=}  0,\ \forall i \in [r_\sigma].
\end{align*}
Since $\sigma_i > 0$ for all $i \in [r_\sigma]$, it follows that $y_i = y_{\pi(i)}$ for all $i \in [r_\sigma]$. This yields
\begin{align}\label{eq1:prop opt}
    G(\bm{W}) = \sum_{i=1}^{d_{\min}} g(\sigma_i; y_i) + \lambda \sum_{i=d_{\min}+1}^{d_Y} y_i^2.
\end{align}
Now, it suffices to show $G(\widehat{\bm{W}}) \ge G(\bm{W})$ for any local minimizer $\widehat{\bm{W}} \in \mathcal{W}_{(\hat{\bm{\sigma}}, \hat{\bm{\Pi}})}$, where $(\hat{\bm{\sigma}}, \hat{\bm{\Pi}}) \in \mathcal{B}$. According to \Cref{coro:min}, we obtain that $\hat{\sigma}_i \in \mathcal{S}_1$ for all $i \in [r_{\hat{\sigma}}]$ and that the permutation $\hat{\pi}$ satisfies $(y_{\hat{\pi}(1)}, \ldots, y_{\hat{\pi}(r_{\hat{\sigma}})}) = (y_1, \ldots, y_{r_{\hat{\sigma}}})$. Using the same computation in \eqref{eq:subf}, we obtain $G(\widehat{\bm{W}}) = \sum_{i=1}^{d_{\min}} g(\hat{\sigma}_i; y_i) + \lambda \sum_{i=d_{\min}+1}^{d_Y} y_i^2.$ This, together with \eqref{eq1:prop opt} and the definition of $\mathcal{H}$, we have $G(\widehat{\bm{W}}) \ge G(\bm{W})$.
\end{proof}
Armed with the above setup, we are ready to prove \Cref{thm:land} and \Cref{coro:land 1}.
\begin{proof}[Proof of \Cref{thm:land}]
By \Cref{lem:equi FG}(i) and (iii), it suffices to analyze a critical point of Problem~\eqref{eq:G}, i.e., \( (\sqrt{\lambda_1} \bm{W}_1, \ldots, \sqrt{\lambda_L} \bm{W}_L) \). This, together with \Cref{prop:opti G}, yields that \( (\sqrt{\lambda_1} \bm{W}_1, \ldots, \sqrt{\lambda_L} \bm{W}_L) \in \mathcal{W}_{(\bm{\sigma}, \bm{\Pi})} \). It follows from \Cref{prop:equivalent} that all critical points in \( \mathcal{W}_{(\bm{\sigma}, \bm{\Pi})} \) are of the same type. Using Propositions \ref{prop:gamma2} and \ref{prop:misalign}, we conclude that all points in \( \mathcal{W}_{(\bm{\sigma}, \bm{\Pi})} \) are strict saddle points if there exists \( i \in [r_\sigma] \) such that \( \sigma_i \in \mathcal{S}_2 \) or \( (y_{\pi(1)}, \ldots, y_{\pi(r_\sigma)}) \neq (y_1, \ldots, y_{r_\sigma}) \). As a result, we prove (i). Then, (ii), (iii), and (iv) follow directly from Propositions \ref{prop:l3local}, \ref{prop:opt}, and \ref{prop:gamma3}, respectively.
\end{proof}
\begin{proof}[Proof of \Cref{coro:land 1}]
Suppose that \eqref{eq:wellcase} holds. Now, we show that each critical point of Problem~\eqref{eq:F} is either a local minimizer or a strict saddle point.
By \Cref{lem:equi FG}(ii) and \Cref{lem:nonlocalmax}, this is equivalent to showing that every critical point
$\bm{W} = (\bm{W}_1, \ldots, \bm{W}_L)$ of Problem~\eqref{eq:G} is not a non-strict saddle point. Suppose that $\bm{W} \in \mathcal{W}_{(\bm{\sigma}, \bm{\Pi})}$ for some $(\bm{\sigma}, \bm{\Pi}) \in \mathcal{B}$.
If $\bm W$ were a non-strict saddle point, then \Cref{thm:land}(i),(ii) imply that
$(y_{\pi(1)},\ldots,y_{\pi(r_\sigma)})=(y_1,\ldots,y_{r_\sigma})$ and
$\sigma_k\in\mathcal S_3$ for some $k\in[r_\sigma]$. Hence
$k\le r_\sigma\le d_{\min}$ and $y_{\pi(k)}=y_k$. Since
$(\bm\sigma,\bm\Pi)\in\mathcal B$, we have $f(\sigma_k;y_{\pi(k)})=0$. Using $\sigma_k\in\mathcal S_3$ and \Cref{lem:derprop}(i),(iv), we obtain that the equality in \eqref{eq:wellcase} holds for $i=k$, contradicting \eqref{eq:wellcase}. Thus,
$\bm W$ is not a non-strict saddle point. According to \Cref{lem:nonlocalmax} and \Cref{def:crit}, each critical point is either a local minimizer or a strict saddle point.

Now suppose that \eqref{eq:wellcase} fails. By \Cref{lem:equi FG}(ii), this is equivalent to show the existence of a non-strict saddle point for Problem~\eqref{eq:G}.
Let \( p\in[d_{\min}] \) be the smallest index for which the equality in \eqref{eq:wellcase} holds. This, together with \Cref{lem:derprop}(iv), implies that \( \hat{x}(y_p) \in \mathcal{S}_3 \).
Let $\bm{\Pi} = \bm{I}_{d_Y}$ and $\bm{\sigma} := \left( \overline{x}(y_1), \ldots, \overline{x}(y_{p-1}), \hat{x}(y_p), 0, \ldots, 0 \right) \in \mathbb{R}^{d_{\min}}.$ By $y_1\ge y_2\ge \dots\ge y_{d_{\rm min}}$, { \Cref{lem:derprop}(ii) and (v)}, we have
\begin{align*}
    \overline{x}(y_1) \geq \cdots \geq \overline{x}(y_{p-1}) \geq \hat{x}(y_p) \geq 0 \quad \text{and} \quad f(\sigma_i; y_i) = 0,\ \forall i \in [d_{\min}].
\end{align*}
Thus, $r_\sigma=p$. Since $\bm{\Pi} = \bm{I}_{d_Y}$, then the associated permutation $\pi(\cdot)$ is the identity mapping, and thus $(y_{\pi(1)}, \ldots, y_{\pi(p)}) = (y_1, \ldots, y_p)$. By the definition of $\mathcal{B}$ in \eqref{set:newb}, we have $(\bm{\sigma}, \bm{\Pi}) \in \mathcal{B}$. By \Cref{prop:gamma3}, every $\bm{W} \in \mathcal{W}_{(\bm{\sigma}, \bm{\Pi})}$ is a non-strict saddle point of Problem~\eqref{eq:G}.
\end{proof}
{
\begin{remark}
    We remark that our proof techniques totally differ from those in \cite{achour2024loss} due to the presence of regularization. In the unregularized setting, the product $\bm{W}_L \cdots \bm{W}_1$ can be treated as a single matrix, and critical points are characterized via invariance under general invertible transformations. In contrast, the introduction of regularization removes this invariance with only orthogonal invariance remaining; moreover, it enforces the balancedness condition, i.e., $\bm W_l\bm W_l^T=\bm W_{l+1}^T\bm W_{l+1}$. Consequently, the rank-based arguments in \cite{achour2024loss} do not apply to the loss landscape analysis in the regularized case. Instead, in the regularized case, the alignment among singular vectors and the equation \eqref{eq:thmsimga pi} play a central role in our analysis. This shift invalidates the saddle point characterization based on the rank-deficiency (i.e., ``tightened pivots'')  used in \cite{achour2024loss}, necessitating entirely new proof techniques to characterize critical points.
\end{remark}}

\section{Experimental Results}\label{sec:expe}
In this section, we conduct numerical experiments to visualize the loss landscape of Problem \eqref{eq:F} in support of our theoretical findings. These visualizations provide intuitive insight into the geometry of different types of critical points, such as local minimizers and strict saddle points. To visualize the loss landscape of Problem \eqref{eq:F}, we use the random projection approach proposed in \cite{goodfellow2014qualitatively,li2018visualizing}. Note that the optimization variables of Problem \eqref{eq:F} are $L$ high-dimensional matrices, i.e., $\bm W=(\bm W_1,\dots,\bm W_L) \in \R^{d_1\times d_0}\times \dots \times\R^{d_L\times d_{L-1}}$. To enable visualization, we select a critical point $\bm{W}^*=(\bm W_1^*,\dots,\bm W_L^*)$ as the reference point. Next, we randomly generate two vectors of size $\sum_{l=1}^L{d_{l-1}d_l}$ with elements drawn from a standard Gaussian distribution and apply the Gram–Schmidt process to generate two vectors $\bm{\delta}_1 \in \R^{\sum_{l=1}^L{d_{l-1} d_l}}, \bm{\delta}_2 \in \R^{\sum_{l=1}^L{d_{l-1} d_l}}$, which are orthogonal to each other. Then, we reshape these two vectors into two matrices $\bm \Delta_1, \bm \Delta_2$, each of which has the same size as $\bm W^*$. Finally, we plot the following function
\begin{align*}
    h(\alpha,\beta) := G(\bm{W}^* + \alpha \bm{\Delta}_1 + \beta \bm{\Delta}_2),
\end{align*}
where $\alpha, \beta \in [-1,1]$.
In our experiments, we set the input and output dimensions $d_0 = 10$ and $d_L = 20$, respectively, and $d_l = 5$ for each hidden layer $l = 1, \dots, L-1$. For the data matrix $\bm Y \in \R^{d_L\times d_0}$, we independently sample each diagonal entry from a uniform distribution over $[0,10]$, i.e., $y_{ii} \overset{i.i.d.}{\sim} U(0,10)$, and set all off-diagonal entries to zero, i.e., $y_{ij} = 0$ for all $i \neq j$. Moreover, the regularization parameters are uniformly set to $\lambda_l = 10^{-4}$ for all $l \in [L]$.

\begin{figure}[t]
    \centering
    \begin{subfigure}[b]{0.3\textwidth}
    \includegraphics[width=\textwidth]{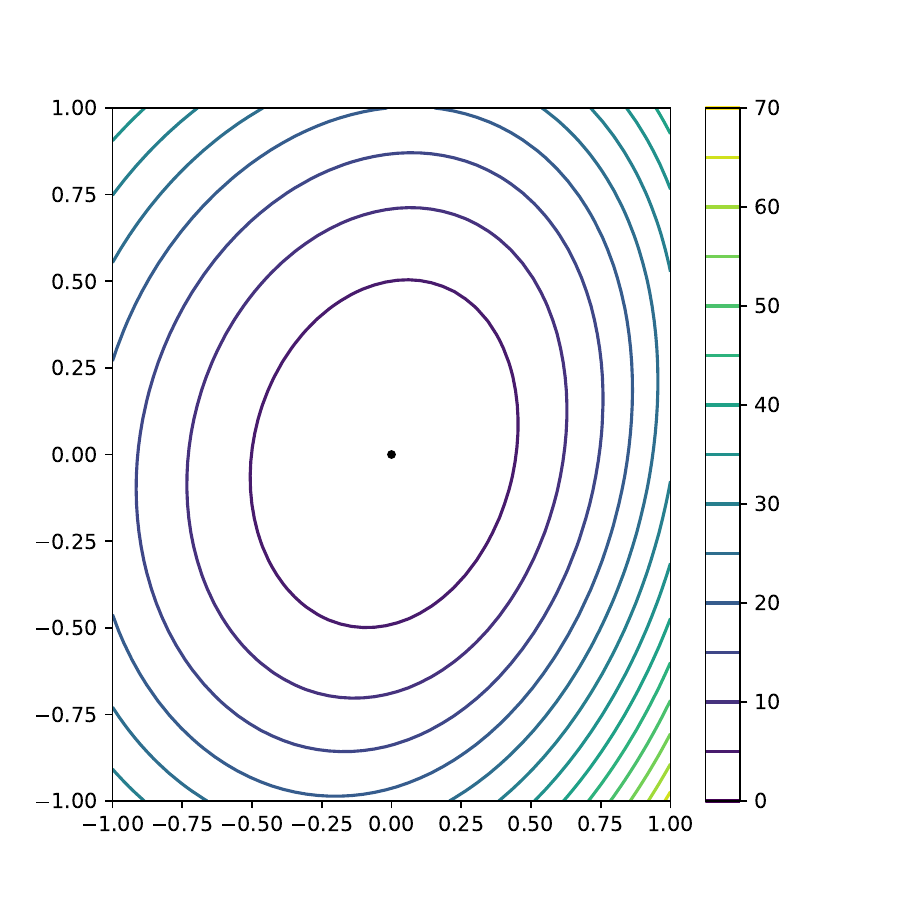}\vspace{-0.1in}
        \caption{Global minimizer (2D)}
        \label{fig:image1}
    \end{subfigure}
    \hfill
    \begin{subfigure}[b]{0.3\textwidth}
        \includegraphics[width=\textwidth]{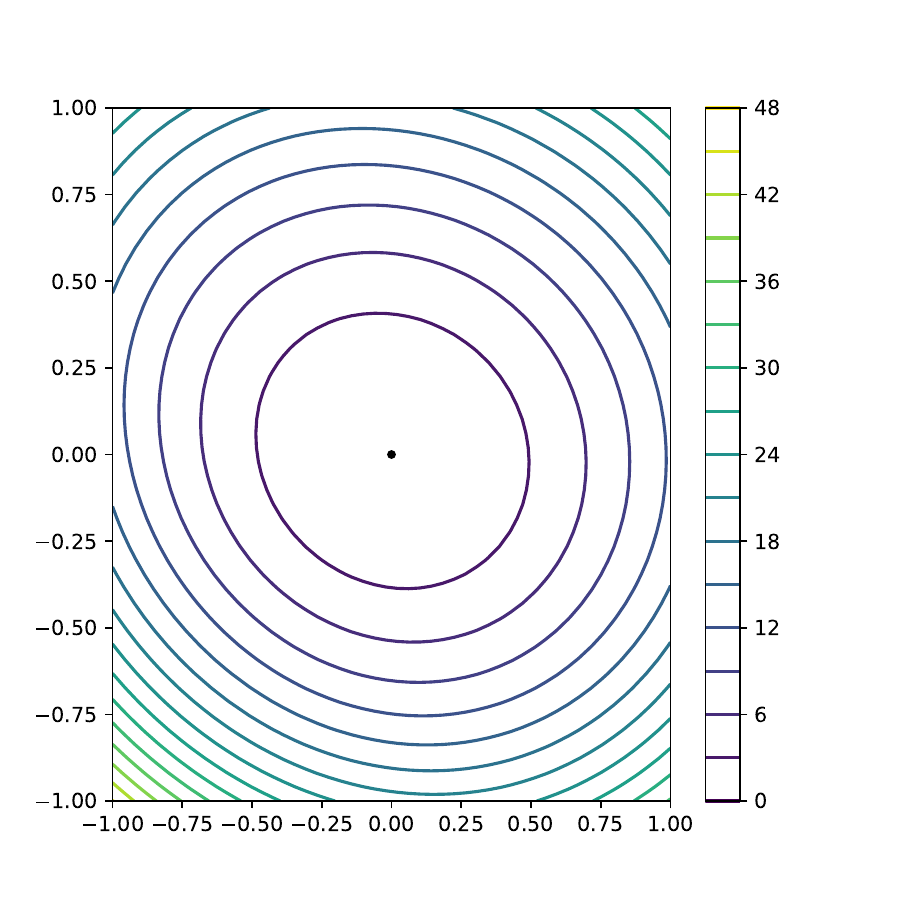}\vspace{-0.1in}
        \caption{Local minimizer (2D)}
        \label{fig:image2}
    \end{subfigure}
    \hfill
    \begin{subfigure}[b]{0.3\textwidth}
        \includegraphics[width=\textwidth]{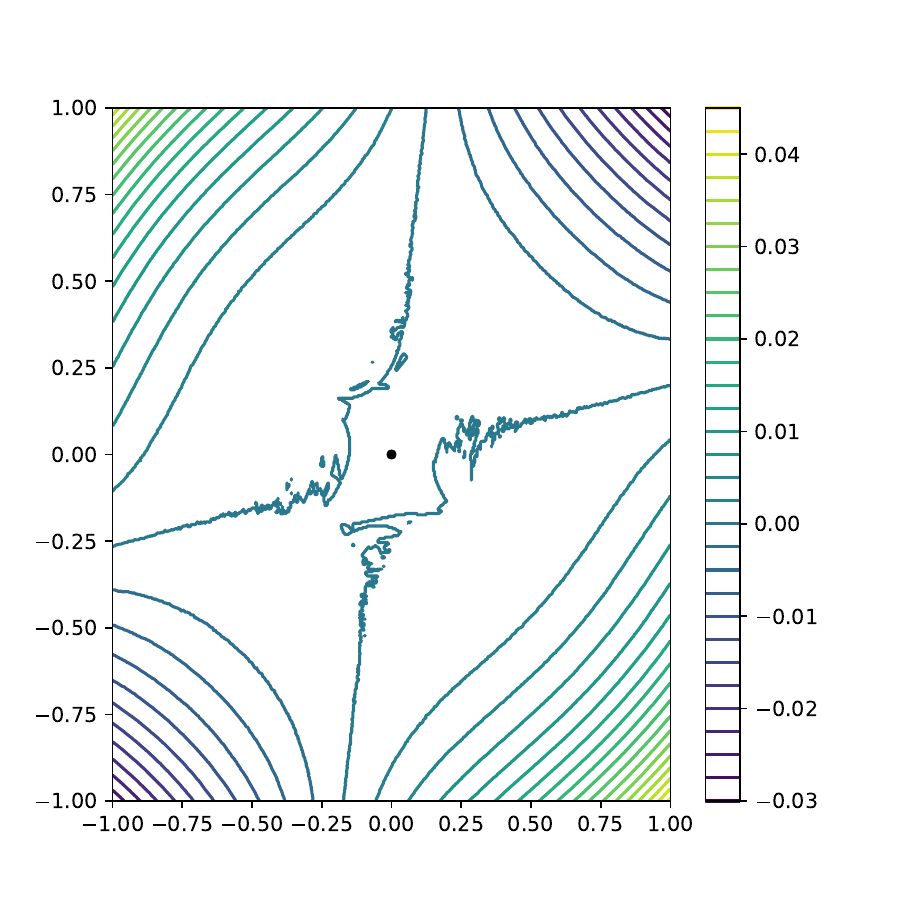}\vspace{-0.1in}
        \caption{Strict saddle (2D)}
        \label{fig:image3}
    \end{subfigure}

    \begin{subfigure}[b]{0.3\textwidth}
        \includegraphics[width=\textwidth]{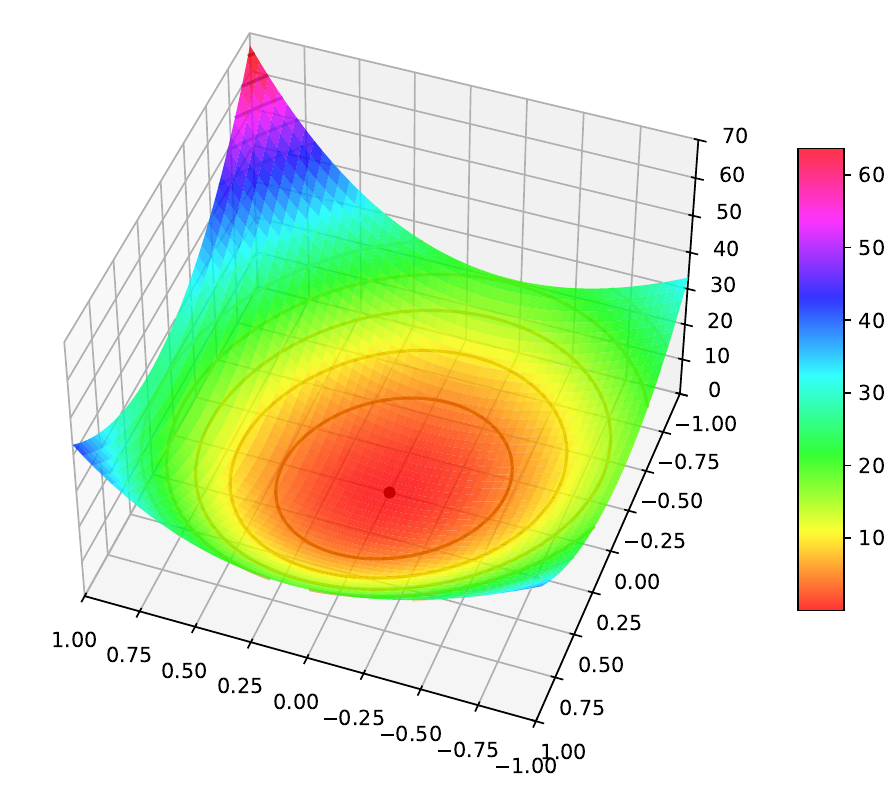}\vspace{-0.05in}
        \caption{Global minimizer (3D)}
        \label{fig:image4}
    \end{subfigure}
    \hfill
    \begin{subfigure}[b]{0.3\textwidth}
        \includegraphics[width=\textwidth]{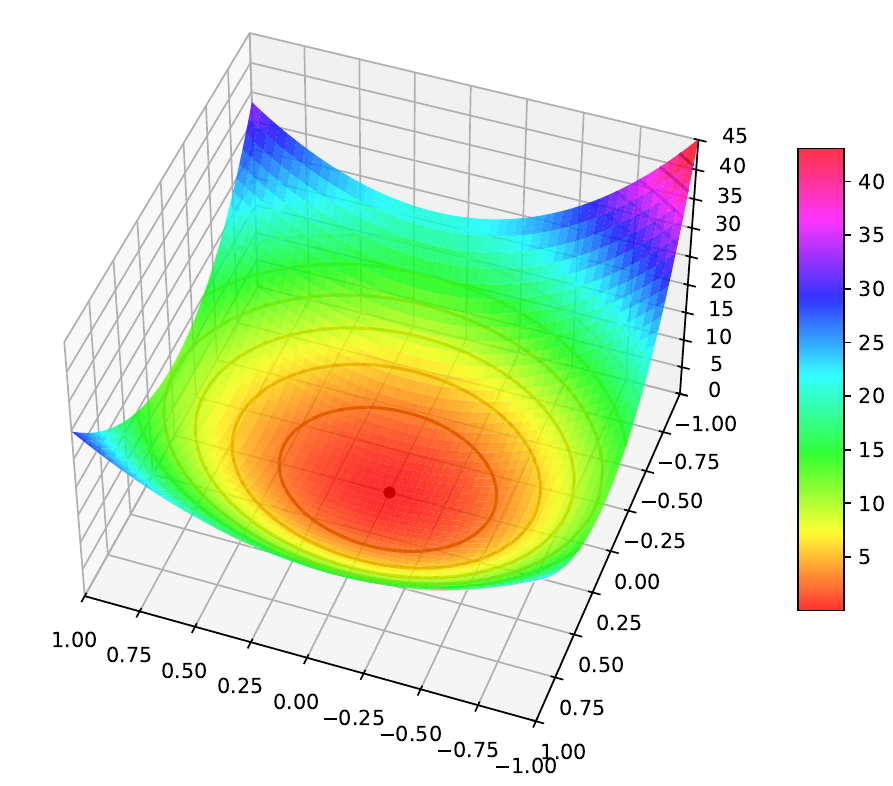}\vspace{-0.05in}
        \caption{Local minimizer (3D)}
        \label{fig:image5}
    \end{subfigure}
    \hfill
    \begin{subfigure}[b]{0.3\textwidth}
        \includegraphics[width=\textwidth]{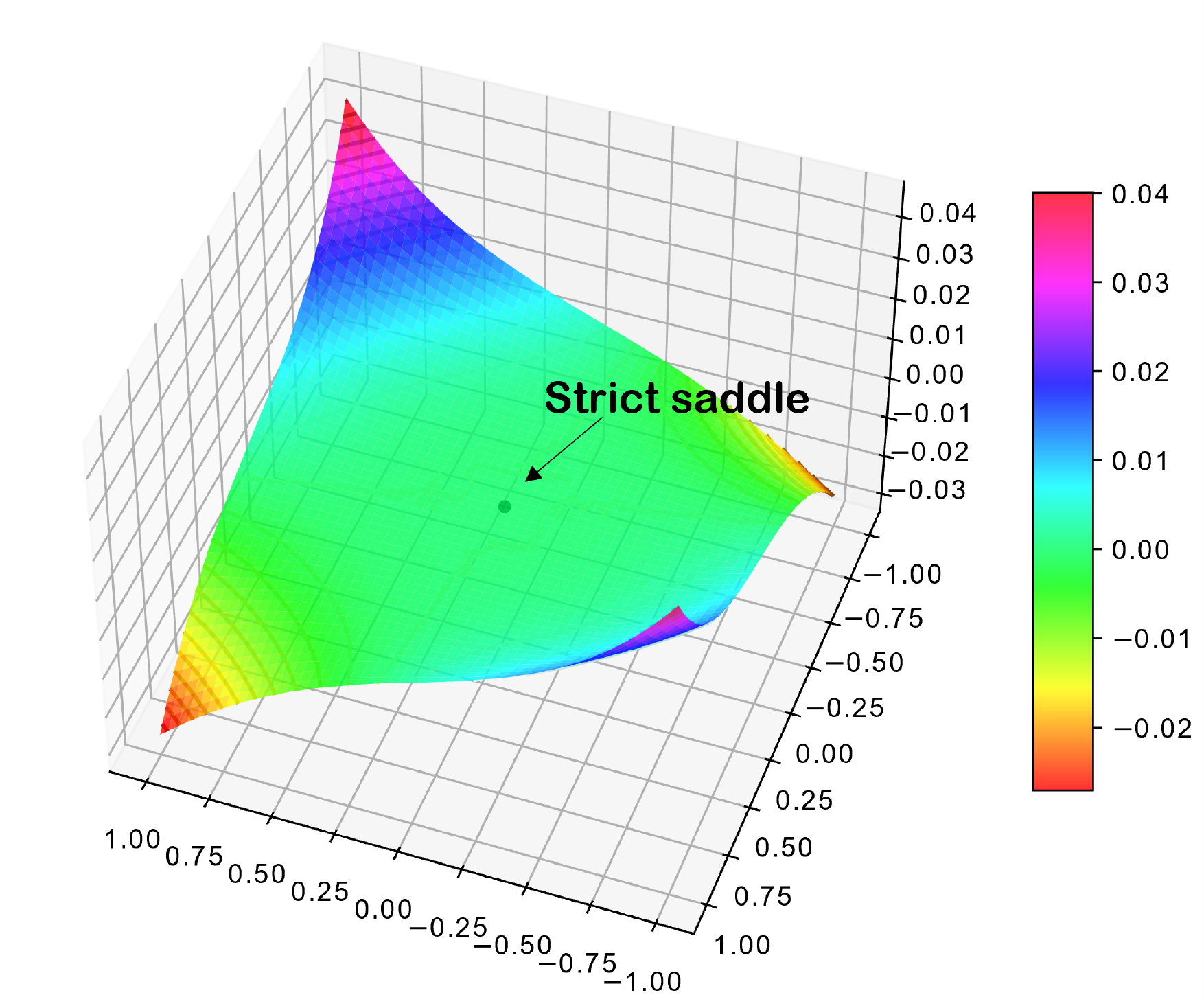}\vspace{-0.05in}
        \caption{Strict saddle (3D)}
        \label{fig:image6}
    \end{subfigure}
    \vspace{-0.15in}
    \caption{{\bf Visualization of the loss landscape of deep matrix factorization ($L = 4$).} The first row displays 2D contour plots for the global minimizer, local minimizer, and strict saddle point, respectively. The second row presents corresponding 3D contour plots.
    Note that the function value visualized in the figures is $h(\alpha,\beta) - h(0,0)$, where $h(0,0) = 39.47$ is the value at the global minimizer; the values at the local minimizer and strict saddle are $59.31$ and $289.07$, respectively.
    } \vspace{-0.2in}
    \label{fig:all_images}
\end{figure}

In the experiments, we visualize the loss landscape of Problem \eqref{eq:F} when $L = 4$. Specifically, we construct a global minimizer, a non-global local minimizer, and a strict saddle point according to  \Cref{thm:crit}, Propositions \ref{prop:misalign}, \ref{prop:l3local}, and \ref{prop:opt}, and use them as reference points.
Then, we visualize the loss landscape around them using 2D and 3D contour plots in \Cref{fig:all_images}. Note that the function values in the figures correspond to $h(\alpha,\beta) - h(0,0)$, highlighting the relative landscape around the reference point. We observe from \Cref{fig:all_images}(a,b,d,e) that the landscape around a local or global minimizer is a bowl-shaped structure, exhibiting a well-behaved region for optimization. In contrast, we observe from \Cref{fig:all_images}(c,f) that the landscape around a strict saddle point has directions of both ascent and descent, forming a saddle geometry.

\section{Conclusion}\label{sec:conc}
In this paper, we studied the loss landscape of the regularized deep matrix factorization problem \eqref{eq:F}. We first derived a closed-form characterization of the critical point set. Building on this characterization, we provided a complete loss landscape analysis by classifying all its critical points. Moreover, we gave a necessary and sufficient condition under which the loss function has a partially benign landscape in the sense that each critical point is either a local minimizer or a strict saddle point. This result provides a rigorous theoretical explanation for the empirical success of gradient-based optimization methods in deep matrix factorization tasks. {  As future work, and motivated by \cite{achour2024loss,kawaguchi2016deep,lu2017depth,nouiehed2022learning,Trager2020Pure}, we plan to extend our study to deep linear networks with general input data, moving beyond the orthogonal-input setting. Motivated further by recent advances on the loss landscapes of deep nonlinear networks \cite{du2018gradient,liu2022spurious,sun2020global}, a promising direction is to broaden our analysis to deep networks with nonlinear activation functions.
}
\bibliographystyle{siamplain}
\bibliography{references,NC}
\appendix
\section{Loss Landscape when $L=2$}\label{sec:app A}
We characterize the set of global minimizers and the loss landscape of Problem~\eqref{eq:G} when $L = 2$. Notably, these results are { adapted from} \cite[Lemma~B.2]{zhou2022optimization} and \cite[Lemma~4.2]{panshaohua2021}.
\begin{proposition} \label{prop:benign2}
For $L=2$, the following statements hold for Problem \eqref{eq:F}: \\
(i) Let
$
\bm{\sigma}^* :=  ((\sqrt{\lambda}y_1 - \lambda)^{\frac{1}{2}}_+,\ \ldots,\ (\sqrt{\lambda}y_{d_{\min}} - \lambda)^{\frac{1}{2}}_+),
$
where $x_+ := \max(x, 0)$ and $\lambda = \lambda_1\lambda_2$.\footnote{The result \cite[Lemma~B.2]{zhou2022optimization}  indicates that for the optimal solution $(\bm{W}_2, \bm{W}_1)$, the $i$-th singular value $\eta_i$ of the product $\bm W_2\bm W_1$ satisfies $\eta_i = (y_i - \sqrt{\lambda_1 \lambda_2})_{+}$.
This matches \Cref{prop:benign2} under  the relation $\eta_i = \sigma_i^{*2}/\sqrt{\lambda_1\lambda_2}$, since this relation together with $\eta_i = (y_i - \sqrt{\lambda_1\lambda_2})_{+}$ and $\lambda = \lambda_1\lambda_2$ yields $\sigma_i^* = (\sqrt{\lambda}\, y_i - \lambda)^{1/2}_{+}$.
} Then $(\bm{W}_1,\bm{W}_2)$ is an optimal solution
for Problem~\eqref{eq:F} {  if and only if}
\[
\bm{W}_1\! =\!\bm{Q}\blk\big( \mathrm{diag}(\bm{\sigma}^*)/\sqrt{\lambda_1},\, \bm{0} \big){  \bm V_Y^T},
\bm{W}_2\! =\!{  \bm U_Y} \blk\big( \mathrm{diag}(\bm{\sigma}^*)/\sqrt{\lambda_2},\, \bm{0} \big) \bm{Q}^T,
\]
{  where $\bm{Q} \in \mathcal{O}^{d_1}$ and $\bm{Y} = \bm{U}_Y \bm{\Sigma}_Y \bm{V}_Y^T$ is an SVD with $\bm{U}_Y \in \mathcal{O}^{d_2}$, $\bm{V}_Y \in \mathcal{O}^{d_0}$.}\\
(ii) Any critical point $\bm{W} \in \mathcal{W}_F$ is either a global minimizer or a strict saddle point.
\end{proposition}

\section{Discussion on the Function about Critical Points}\label{subsec:function}

Here, we study the number of non-negative roots of the equation $f(x;y) = 0$ in terms of $x$, where $f(x;y)$ is defined in \eqref{eq:deffty}.
For any fixed value of \( y\ge 0 \), we compute the derivatives of \( f(x; y) \) with respect to $x$ as follows
\begin{subequations}
\begin{align}
    \frac{\partial f(x; y)}{\partial x} &= (2L - 1)x^{2L - 2} - \sqrt{\lambda}(L - 1)y x^{L - 2} + \lambda, \label{eq:1der}\\
    \frac{\partial^2 f(x; y)}{\partial x^2} &= (2L - 1)(2L - 2)x^{2L - 3} - \sqrt{\lambda}(L - 1)(L - 2)y x^{L - 3}. \label{eq:2der}
\end{align}
\end{subequations}
Obviously, $x = 0$ is a root for any $y \ge 0$.
For $L \ge 3$, one can show that the equation $\partial_{xx} f(x; y) = 0$ has at most one positive root. Together with $\partial_x f(0; y) = \lambda > 0$, this implies that $\partial_x f(x; y) = 0$ has at most two distinct positive roots. Combining this with the boundary conditions $f(0; y) = 0$ and $\lim_{x \to +\infty} f(x; y) = +\infty$, we conclude that $f(x; y) = 0$ has at most two distinct positive roots for any fixed $y \ge 0$.

\subsection{Proof of \Cref{lem:derprop}}\label{subsec:proofderprop}
\begin{proof}
(i)
Given $y \ge 0$, computing the positive roots of \(f(x;y)=0\) is equivalent to computing the positive roots of $v(x)-y = 0$, where
\[
v(x):=\frac{x^{2L-1}+\lambda x}{\sqrt{\lambda}\,x^{L-1}} .
\]
Clearly, \(v(x)\to+\infty\) as \(x\to0_+\) and \(v(x)\to+\infty\) as \(x\to+\infty\).
Differentiating $v(x)$ and solving
\[
v'(x)= \frac{L}{\sqrt{\lambda}}x^{L-1} - (L-2)\sqrt{\lambda}x^{1-L} = 0,
\]
yield the solution \(x_*\) as defined in \eqref{eq:x*}.
One can verify that \(v(x)\) decreases on \((0,x_*)\) and increases on \((x_*,\infty)\),
attaining its minimum at \(x_*\) with $v(x_*) = y_*$.
Therefore, \(v(x)=y\)  has two distinct positive roots if \(y > y_*\), a unique positive root if \(y = y_*\), and no positive root if \(y < y_*\).

(ii)
It follows from (i) and $y>y_*$ that $\underline{x}(y)\in(0,x_*)$  and $\overline{x}(y)\in(x_*,\infty)$. Differentiating $v(\, \underline{x}(y)\,)=y$ and $v(\,\overline{x}(y)\,)=y$ with respect to $y$ yields
\[
\underline{x}'(y)=\frac{1}{v'(\underline{x}(y))},\qquad
\overline{x}'(y)=\frac{1}{v'(\overline{x}(y))}.
\]
Since $v'(x)<0$ for $x\in(0,x_*)$ and $v'(x)>0$ for $x\in(x_*,\infty)$, we obtain
$
\underline{x}'(y)<0$ and $\overline{x}'(y)>0.$
Hence the smaller root $\underline{x}(y)$ is strictly decreasing in $y$, while the larger root $\overline{x}(y)$ is strictly increasing in $y$.

(iii) Let $y > y_* > 0$ be fixed. From (i), we obtain that $f(x; y) = 0$ has two distinct positive roots.
According to \eqref{eq:2der}, \( \partial_{xx} f(x; y) = 0\) has only one positive root, denoted by $\alpha$. This, together with \eqref{eq:1der}, implies that \( \partial_x f(x; y)\) is monotonically decreasing on the interval \( [0, \alpha] \) and monotonically increasing on \( (\alpha, +\infty) \). Now, we claim that $\partial_x f(x;y) = 0$ has two distinct positive roots, denoted by $\beta_1$ and $\beta_2$, where $\beta_1 < \beta_2$. Then, we prove the claim by contradiction. Suppose that $\partial_x f(x;y) = 0$ has either a unique positive root or no positive roots. This, together with \( \partial_x f(0; y)= \lambda > 0 \) and the monotonicity of  \( \partial_x f(x; y)\), implies that $\partial_x f(x;y) > 0$ for all $x \ge 0$ and $x \neq \alpha$. Using this and $f(0;y) = 0$, we obtain that $f(x;y) > 0$  for all $x > 0$, which implies $f(x;y) = 0$ has no positive roots. This contradicts the fact that $f(x;y) = 0$ has two positive roots.

Note that \( 0 < \beta_1 < \alpha < \beta_2 \). One can verify that \( f(x; y) \) is monotonically increasing on \( [0, \beta_1] \), monotonically decreasing on \( (\beta_1, \beta_2] \), and monotonically increasing on \( (\beta_2, +\infty) \). Noting that \(  \overline{x}(y) > \underline{x}(y) > 0\) denote the positive roots of \( f(x;y) = 0 \), we have that \( \underline{x}(y) \in (\beta_1, \beta_2) \) and \( \overline{x}(y) \in (\beta_2, +\infty) \). This implies the desired results.

(iv)
According to the proof of (i), $f(x; y) = 0$ with $y \ge 0$ has a unique positive root if and only if $y=y_*$.
As discussed in (i), this root is just $x_*$.
Therefore, when $\mathcal{S}_3 \neq \emptyset$, we have $\mathcal{S}_3 = \{x_*\}$.
Substituting the expressions of $x_*$ and $y_*$ into \eqref{eq:1der} and \eqref{eq:2der},
we obtain $\tfrac{\partial f(x_*; y_*)}{\partial x} = 0$ and $\tfrac{\partial^2 f(x_*; y_*)}{\partial x^2}=2(L-1)(L-2)\left(\frac{L}{L-2}\right)^{\frac{1}{2L-2}}\lambda^{\frac{2L-3}{2L-2}} > 0$.

(v) By (i), $v(x)=y_*$ has a unique solution $\hat x(y_*)=x_*$. Hence, if $\mathcal S_3\neq\emptyset$, we have $\mathcal S_3=\{\,x_*\,\}$. Moreover, for any $z_1\in\mathcal S_1$ and $z_2\in\mathcal S_2$, it follows from the proof in (i) that $z_1>x_*$ and $z_2<x_*$. This directly implies
$
z_1 > x_* > z_2
$.
\end{proof}

\section{Auxiliary Results}
\begin{lemma}[Mirsky Inequality \cite{stewart1990matrix}]\label{lem:mirsky}
For any matrices $\bm X, \tilde{\bm X} \in \mathbb{R}^{m \times n}$ with singular values
$
\sigma_1 \geq \sigma_2 \geq \cdots \geq \sigma_{l}$ and $ \tilde{\sigma}_1 \geq \tilde{\sigma}_2 \geq \cdots \geq \tilde{\sigma}_{l},
$
where $l=\min\{m,n\}$,
then for any unitarily invariant norm (e.g., $\|\cdot\|_F$), we have $$\|\tilde{\bm X} - \bm X\| \ge \|\mathrm{diag}(\tilde{\sigma}_1 - \sigma_1,\dots,\tilde{\sigma}_{l} - \sigma_{l})\|.$$
\end{lemma}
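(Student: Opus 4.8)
The statement is a classical matrix-perturbation inequality (Mirsky), so the plan is to reduce it to two standard building blocks: von Neumann's trace inequality, which already settles the Frobenius (Schatten-$2$) case---the only instance invoked elsewhere in the paper, see \eqref{eq5:prop l3local}---and the Lidskii--Wielandt majorization inequality for eigenvalues of symmetric matrices, which handles a general unitarily invariant norm.

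First I would dispose of the Frobenius case directly. Writing $\bm E := \tilde{\bm X} - \bm X$ and $l = \min\{m,n\}$, one expands
\begin{align*}
\|\bm E\|_F^2 = \|\tilde{\bm X}\|_F^2 - 2\Tr(\bm X^T \tilde{\bm X}) + \|\bm X\|_F^2 = \sum_{i=1}^{l}\tilde\sigma_i^2 + \sum_{i=1}^{l}\sigma_i^2 - 2\Tr(\bm X^T \tilde{\bm X}),
\end{align*}
and then invokes von Neumann's trace inequality $\Tr(\bm X^T \tilde{\bm X}) \le \sum_{i=1}^{l}\sigma_i\tilde\sigma_i$, which applies precisely because both singular-value sequences are listed in decreasing order. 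Substituting gives $\|\bm E\|_F^2 \ge \sum_{i=1}^{l}(\tilde\sigma_i - \sigma_i)^2 = \|\diag(\tilde\sigma_1 - \sigma_1,\dots,\tilde\sigma_l - \sigma_l)\|_F^2$, which is the claim for $\|\cdot\| = \|\cdot\|_F$.

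For an arbitrary unitarily invariant norm I would pass to Hermitian (Jordan--Wielandt) dilations. Let $\widehat{\bm X}$ denote the $(m+n)\times(m+n)$ symmetric matrix with zero diagonal blocks and off-diagonal blocks $\bm X$ and $\bm X^T$, and define $\widehat{\tilde{\bm X}}$ and $\widehat{\bm E}$ likewise; then the spectrum of $\widehat{\bm X}$ is $\{\pm\sigma_i(\bm X)\}_{i=1}^{l}$ together with $|m-n|$ zeros (similarly for $\widehat{\tilde{\bm X}}$ and $\widehat{\bm E}$), and $\widehat{\bm E} = \widehat{\tilde{\bm X}} - \widehat{\bm X}$. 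The Lidskii--Wielandt theorem then gives that the vector of ordered eigenvalue gaps $(\lambda_k^{\downarrow}(\widehat{\tilde{\bm X}}) - \lambda_k^{\downarrow}(\widehat{\bm X}))_k$ is majorized by $\lambda^{\downarrow}(\widehat{\bm E})$. Under the decreasing-order convention this gap vector is exactly $\{\pm(\tilde\sigma_i - \sigma_i)\}_{i=1}^{l}$ padded with zeros, and comparing the partial sums of the top $k\le l$ entries against those of $\{\pm\sigma_i(\bm E)\}$ shows that $(|\tilde\sigma_i - \sigma_i|)_{i=1}^{l}$ is weakly majorized by $(\sigma_i(\bm E))_{i=1}^{l}$. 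Since every unitarily invariant norm equals a symmetric gauge function of the singular values (von Neumann), and such functions are nondecreasing under weak majorization, this yields $\|\diag(\tilde\sigma_i - \sigma_i)\| \le \|\bm E\|$.

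The hard part will be the bookkeeping in the dilation step: aligning the spectra of $\widehat{\bm X}$ and $\widehat{\tilde{\bm X}}$ when $m \neq n$, so that zeros appear in the middle of the ordered spectrum, and then correctly converting the two-sided, sign-symmetric majorization on $\R^{m+n}$ into a weak majorization of the $l$-dimensional absolute-gap vectors. Everything else is routine; and since the paper only ever uses the Frobenius instance, it would also be legitimate to present just the first argument and cite \cite{stewart1990matrix} for the general statement.
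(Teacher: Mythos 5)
Your argument is sound, but note that the paper itself offers no proof of \Cref{lem:mirsky}: it is imported verbatim from \cite{stewart1990matrix} as an auxiliary result, so there is no in-paper derivation to compare against. What you have written is the standard textbook proof and both halves check out. The Frobenius case via von Neumann's trace inequality is complete as stated, since $\|\bm E\|_F^2 \ge \sum_{i=1}^{l}\tilde\sigma_i^2 - 2\sum_{i=1}^{l}\sigma_i\tilde\sigma_i + \sum_{i=1}^{l}\sigma_i^2 = \sum_{i=1}^{l}(\tilde\sigma_i-\sigma_i)^2$, and this is in fact the only instance the paper ever uses (in \eqref{eq5:prop l3local} and again inside the proof of \Cref{prop:gamma3}). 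The general unitarily invariant case via the Jordan--Wielandt dilation is also correct: the ordered spectra of the dilations are $\pm$-symmetric with $|m-n|$ zeros in the middle, so the componentwise eigenvalue-gap vector consists of $\pm(\tilde\sigma_i-\sigma_i)$ padded with zeros; Lidskii--Wielandt majorizes it by the spectrum of the dilated difference, i.e.\ by $\pm\sigma_i(\bm E)$ padded with zeros, and because both vectors are sign-symmetric the top-$k$ partial sums ($k\le l$) are precisely the sums of the $k$ largest absolute entries, which converts the majorization into weak majorization of $(|\tilde\sigma_i-\sigma_i|)_{i=1}^{l}$ by $(\sigma_i(\bm E))_{i=1}^{l}$; Ky Fan dominance (equivalently, monotonicity of symmetric gauge functions under weak majorization) then finishes the proof. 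The bookkeeping you flag is exactly where care is needed, and your description handles it correctly. Given the paper's usage, presenting only the Frobenius argument and citing \cite{stewart1990matrix} for the general statement, as you suggest, would be entirely adequate.
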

\begin{lemma}\label{lem:schattenp}
{\em (\cite[Corollary 1]{xu2017unified})} Given $L \ (L \geq 2)$ matrices $\{\bm X_l\}_{l=1}^L$, where $\bm X_1 \in \mathbb{R}^{m \times d_1}$, $\bm X_l \in \mathbb{R}^{d_{l-1} \times d_l}$, $l = 2,\ldots,L-1$, $\bm X_L \in \mathbb{R}^{d_{L-1} \times n}$, and $\bm X \in \mathbb{R}^{m \times n}$ with $\mathrm{rank}(\bm X)=r \leq \min\{d_l,l=1,\ldots,L\}$, it holds for any $p,p_1,\ldots,p_L > 0$ satisfying $1/p = \sum_{l=1}^L {1}/{p_l}$ that
\begin{equation*}
\frac{1}{p}\|\bm X\|_{S_p}^p = \min_{\bm X_l:\bm X = \prod_{l=1}^L \bm X_l} \sum_{l=1}^L \frac{1}{p_l}\|\bm X_l\|_{S_{p_l}}^{p_l},
\end{equation*}
where the Schatten-$p$ norm
is defined as $\| \bm X \|_{S_p} := \left( \sum_{i=1}^{\min(m,n)} \sigma_i^p(\bm X) \right)^{{1}/{p}}.$
\end{lemma}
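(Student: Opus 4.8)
The plan is to establish the stated identity as two matching inequalities. Writing $m_l := 1/p_l$ and noting $\sum_{l=1}^L m_l = 1/p$, I will (a) show $\tfrac1p\|\bm X\|_{S_p}^p \le \sum_{l=1}^L m_l\|\bm X_l\|_{S_{p_l}}^{p_l}$ for \emph{every} factorization $\bm X = \prod_{l=1}^L \bm X_l$, and (b) exhibit one factorization for which equality holds. Together these yield ``$\ge$'' (via (a)) and ``$\le$'' (via (b)) between $\tfrac1p\|\bm X\|_{S_p}^p$ and the minimum on the right-hand side, hence the equality.

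For (a), the main ingredient is the multiplicative (weak log-majorization) inequality for singular values of a product: for compatible matrices $\bm A,\bm B$ one has $\prod_{i=1}^k \sigma_i(\bm A\bm B) \le \prod_{i=1}^k \sigma_i(\bm A)\sigma_i(\bm B)$ for every $k$ (Horn / Gel'fand--Naimark). Iterating over the $L$ factors gives $\prod_{i=1}^k \sigma_i(\bm X) \le \prod_{i=1}^k \prod_{l=1}^L \sigma_i(\bm X_l)$ for every $k$, after padding the singular-value sequences with zeros to a common length. Since $s\mapsto e^{ps}$ is convex and increasing, weak log-majorization of nonnegative vectors upgrades to weak majorization after applying $t\mapsto t^p$, so $\sum_i \sigma_i(\bm X)^p \le \sum_i\big(\prod_{l=1}^L\sigma_i(\bm X_l)\big)^p$. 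Next apply the weighted AM--GM inequality with weights $w_l := p\,m_l = p/p_l$ (which sum to one) to the numbers $\sigma_i(\bm X_l)^{p_l}$, obtaining $\big(\prod_{l=1}^L\sigma_i(\bm X_l)\big)^p = \prod_{l=1}^L\big(\sigma_i(\bm X_l)^{p_l}\big)^{w_l} \le \sum_{l=1}^L w_l\,\sigma_i(\bm X_l)^{p_l}$ for each $i$. Summing over $i$ and combining with the previous display gives $\|\bm X\|_{S_p}^p \le \sum_{l=1}^L w_l\|\bm X_l\|_{S_{p_l}}^{p_l} = p\sum_{l=1}^L m_l\|\bm X_l\|_{S_{p_l}}^{p_l}$; dividing by $p$ proves (a).

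For (b), take a compact SVD $\bm X = \bm U\bm\Sigma\bm V^T$ with $\bm\Sigma = \mathrm{diag}(\sigma_1,\dots,\sigma_r)$ and $r = \mathrm{rank}(\bm X)\le \min_l d_l$, and set $\bm D_l := \mathrm{diag}(\sigma_1^{p/p_l},\dots,\sigma_r^{p/p_l})$ for each $l$. Because $\sum_l p/p_l = 1$, we have $\bm D_1\cdots\bm D_L = \bm\Sigma$. Define $\bm X_1 := \bm U[\bm D_1\ \bm 0]\in\R^{m\times d_1}$, $\bm X_l := \blk(\bm D_l,\bm 0)\in\R^{d_{l-1}\times d_l}$ for $2\le l\le L-1$, and $\bm X_L := \begin{bmatrix}\bm D_L \\ \bm 0\end{bmatrix}\bm V^T\in\R^{d_{L-1}\times n}$; the zero-padding is legitimate since $r\le\min_l d_l$. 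Then $\prod_{l=1}^L \bm X_l = \bm U\bm D_1\cdots\bm D_L\bm V^T = \bm X$, while $\|\bm X_l\|_{S_{p_l}}^{p_l} = \sum_{i=1}^r \sigma_i^{(p/p_l)p_l} = \sum_{i=1}^r\sigma_i^p = \|\bm X\|_{S_p}^p$ for every $l$, so $\sum_{l=1}^L m_l\|\bm X_l\|_{S_{p_l}}^{p_l} = \big(\sum_{l=1}^L 1/p_l\big)\|\bm X\|_{S_p}^p = \tfrac1p\|\bm X\|_{S_p}^p$, which meets the lower bound from (a).

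The step I expect to be the main obstacle is the passage from the multiplicative singular-value inequality to the weak majorization of the $p$-th powers: one must handle rectangular and possibly rank-deficient factors, align the singular-value sequences to a common length by zero-padding, and invoke the correct principle that weak log-majorization implies weak majorization under a convex increasing map of the logarithm. Once that is in place, the weighted AM--GM step and the dimension bookkeeping in the achievability construction are routine.
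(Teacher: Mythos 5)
Your proof is correct. Note, however, that the paper does not prove this lemma at all: it is imported verbatim as \cite[Corollary 1]{xu2017unified}, so there is no in-paper argument to compare against, and your write-up is a genuinely self-contained derivation. Your two-sided structure is the standard route to this identity and both halves check out: (a) for an arbitrary factorization, the Horn/Gel'fand--Naimark multiplicative inequality iterated over the $L$ factors gives weak log-majorization of $\{\sigma_i(\bm X)\}$ by $\{\prod_l \sigma_i(\bm X_l)\}$, the map $t\mapsto t^p$ preserves weak log-majorization and weak log-majorization implies weak majorization for nonnegative vectors (the convexity of $s\mapsto e^{ps}$ argument you cite), and the weighted AM--GM step with weights $p/p_l$ summing to one turns $(\prod_l\sigma_i(\bm X_l))^p$ into $\sum_l (p/p_l)\,\sigma_i(\bm X_l)^{p_l}$; (b) the balanced factorization $\bm D_l = \mathrm{diag}(\sigma_1^{p/p_l},\dots,\sigma_r^{p/p_l})$ padded into the prescribed shapes multiplies back to $\bm U\bm\Sigma\bm V^T$ precisely because $\sum_l p/p_l = 1$, and the hypothesis $r\le\min_l d_l$ is exactly what makes the padding legitimate, so the bound from (a) is attained and the infimum is a minimum. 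The only point requiring care is the one you already flag — aligning singular-value sequences of rectangular, possibly rank-deficient factors by zero-padding before invoking log-majorization — and the handling is standard (zeros only help the inequalities, or one argues by a limiting/truncation argument). In short: correct, and it supplies a proof where the paper relies on an external citation.
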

\begin{lemma}\cite[Proposition 9.6]{boumal2023introduction}\label{lem:boumalcite}
 Let $\mathcal{M}_1$ and $\mathcal{M}_2$ be two manifolds and $\phi\colon \mathcal{M}_2 \to \mathcal{M}_1$ be a map. Consider a function $f_1\colon \mathcal{M}_1 \to \mathbb{R}$ and its lift $f_2 = f_1\circ \phi\colon \mathcal{M}_2 \to \mathbb{R}$. The optimization problems $\min_{\bm x \in \mathcal{M}_1} f_1(\bm x)$ and $\min_{\bm x \in \mathcal{M}_2} f_2(\bm x)$ are related as follows:\\
 (i) If $\phi$ is surjective, $\bm x$ is a global minimizer of $f_2$ if and only if $\phi(\bm x)$ is a global minimizer of $f_1$.\\
 (ii) If $\phi$ is continuous and open, $\bm x$ is a local minimizer of $f_2$ if and only if $\phi(\bm x)$ is a local minimizer of $f_1$. \\
(iii) If $\phi$ is smooth and its differential at each point is surjective,  then $\bm x$ is a first-order (resp., second-order) critical point of $f_2$ if and only if $\phi(\bm x)$ is a first-order (resp., second-order) critical point of $f_1$.
\end{lemma}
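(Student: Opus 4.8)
The plan is to treat the three parts separately, in each case using only the defining identity $f_2 = f_1\circ\pi$ together with the stated regularity hypothesis on $\pi$; parts (i) and (ii) are elementary, and only part (iii) requires differential-geometric input. For (i): the ``if'' direction uses nothing about $\pi$, since $f_2(\bm z) = f_1(\pi(\bm z)) \ge f_1(\pi(\bm x)) = f_2(\bm x)$ for every $\bm z\in\mathcal{M}_2$ when $\pi(\bm x)$ is a global minimizer of $f_1$; for the ``only if'' direction I would invoke surjectivity to write an arbitrary $\bm y\in\mathcal{M}_1$ as $\pi(\bm z)$ and conclude $f_1(\bm y) = f_2(\bm z)\ge f_2(\bm x) = f_1(\pi(\bm x))$. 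For (ii) the same two computations are localized: if $\pi(\bm x)$ is a local minimizer witnessed by a neighborhood $V$, then continuity makes $\pi^{-1}(V)$ a neighborhood of $\bm x$ on which $f_2 = f_1\circ\pi\ge f_1(\pi(\bm x)) = f_2(\bm x)$; if $\bm x$ is a local minimizer witnessed by a neighborhood $U$, then openness makes $\pi(U)$ a neighborhood of $\pi(\bm x)$, and every point of $\pi(U)$ is $\pi(\bm z)$ for some $\bm z\in U$, giving the required inequality for $f_1$.

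The substance is in (iii), where the hypothesis that the differential of $\pi$ is surjective everywhere---i.e.\ $\pi$ is a submersion---is exactly what is needed. The first-order statement follows from the chain rule $\mathrm{D}f_2(\bm x) = \mathrm{D}f_1(\pi(\bm x))\circ\mathrm{D}\pi(\bm x)$: vanishing of $\mathrm{D}f_1(\pi(\bm x))$ gives vanishing of $\mathrm{D}f_2(\bm x)$, while conversely $\mathrm{D}f_2(\bm x) = 0$ together with surjectivity of $\mathrm{D}\pi(\bm x)$ forces $\mathrm{D}f_1(\pi(\bm x)) = 0$. For the second-order statement I would invoke the local normal form for submersions: around $\bm x$ and $\pi(\bm x)$ there exist charts in which $\pi$ becomes the coordinate projection $(\bm u,\bm v)\mapsto\bm u$, so that $f_2(\bm u,\bm v) = f_1(\bm u)$ in these coordinates. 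At a point that is first-order critical (which is unambiguous by the previous sentence), the coordinate Hessian of $f_2$ is block diagonal and equals $\mathrm{diag}(\mathrm{Hess}\,f_1,\bm 0)$, which is positive semidefinite if and only if the Hessian of $f_1$ at $\pi(\bm x)$ is; since at a critical point definiteness of the Riemannian Hessian agrees with that of the coordinate Hessian, this transfers to $\nabla^2 f_2(\bm x)$ and $\nabla^2 f_1(\pi(\bm x))$ and completes (iii). An alternative, handling the two directions separately, is to lift tangent vectors along the surjection $\mathrm{D}\pi(\bm x)$ and to use a local smooth section $\bm s$ of $\pi$ with $\bm s(\pi(\bm x)) = \bm x$, which satisfies $f_1 = f_2\circ\bm s$ near $\pi(\bm x)$.

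The main obstacle is making the second-order part of (iii) rigorous, since the Riemannian Hessian depends on the chosen connection away from critical points. The safest route is to carry out the whole second-order computation inside the submersion normal form, where it reduces to a plain coordinate Hessian computation, or equivalently to restrict attention to critical points, where the Hessian is an intrinsic symmetric bilinear form on the tangent space so that the block-diagonal coordinate identity is legitimate. A minor but essential point is that the normal form (equivalently, the local section $\bm s$) requires $\mathrm{D}\pi$ to be surjective at \emph{every} point of a neighborhood of $\bm x$, not merely at $\bm x$ itself---which is precisely what the hypothesis of (iii) supplies.
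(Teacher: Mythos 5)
The paper does not prove this lemma at all: it is quoted verbatim from \citet[Proposition 9.6]{boumal2023introduction} and used as a black box, so there is no in-paper argument to compare against. Your proof is correct and self-contained, and it follows the standard route for this result: parts (i) and (ii) are exactly the elementary inequalities obtained from $f_2=f_1\circ\pi$ plus surjectivity, continuity, and openness, and part (iii) is the chain rule for first order together with a legitimate second-order transfer. Your handling of the second-order statement via the submersion normal form is sound, since at a first-order critical point the Riemannian Hessian, evaluated as a quadratic form, coincides with the coordinate Hessian of the coordinate representation (the connection terms are killed by the vanishing gradient), so the block-diagonal identity $\mathrm{diag}(\mathrm{Hess}\, f_1,\bm 0)$ does give equivalence of positive semidefiniteness; an equally common alternative, closer to the cited source, is to characterize second-order criticality through curves, i.e.\ $(f\circ c)''(0)\ge 0$ for all smooth curves through the point, and to lift curves of $\mathcal{M}_1$ through the submersion. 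One small inaccuracy in your closing remark: surjectivity of $\mathrm{D}\pi$ at the single point $\bm x$ already implies surjectivity on a neighborhood (full rank is an open condition for smooth maps), so the normal form does not genuinely need the hypothesis at every point; this does not affect your argument, since the hypothesis supplies it everywhere anyway. For the way the lemma is used in this paper (the maps $\varphi$ and $\psi$ are linear bijections between Euclidean spaces), all three hypotheses hold trivially, so your general proof more than covers the application.
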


\begin{lemma}\label{lem:inequality_alpha}
For any $ |x| \le 1/2$ and $ 0 < \alpha < 1 $, it holds that
\begin{align*}
(1 + x)^\alpha \ge 1 + \alpha x + \frac{\alpha(\alpha - 1)}{2} x^2 - \frac{4\alpha(\alpha - 1)(\alpha - 2)}{3}  |x|^3.
\end{align*}
\end{lemma}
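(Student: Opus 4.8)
The plan is to apply Taylor's theorem with the Lagrange form of the remainder to $g(x) := (1+x)^{\alpha}$, expanded around $0$ to second order, and then control the cubic remainder by splitting on the sign of $x$. First I would record the derivatives $g'(x) = \alpha(1+x)^{\alpha-1}$, $g''(x) = \alpha(\alpha-1)(1+x)^{\alpha-2}$, and $g'''(x) = \alpha(\alpha-1)(\alpha-2)(1+x)^{\alpha-3}$, all continuous on $(-1,\infty) \supseteq [-1/2, 1/2]$. Taylor's theorem then yields, for each $x$ with $|x| \le 1/2$ and $x \neq 0$, some $\xi$ strictly between $0$ and $x$ such that
\[
(1+x)^{\alpha} = 1 + \alpha x + \frac{\alpha(\alpha-1)}{2}x^2 + \frac{\alpha(\alpha-1)(\alpha-2)}{6}(1+\xi)^{\alpha-3}x^3.
\]
Writing $c_{\alpha} := \alpha(\alpha-1)(\alpha-2)$, I would note that $c_\alpha > 0$ since $\alpha > 0$ while $\alpha - 1 < 0$ and $\alpha - 2 < 0$; hence it remains only to show that the remainder term is at least $-\tfrac{8}{6}\,c_\alpha |x|^3 = -\tfrac{4}{3}\,c_\alpha |x|^3$.

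For $x \ge 0$ the remainder is a product of the nonnegative quantities $c_\alpha/6$, $(1+\xi)^{\alpha-3}$, and $x^3 = |x|^3$, hence nonnegative, so it trivially dominates the nonpositive bound. For $x < 0$, I would rewrite the remainder as $-\tfrac{c_\alpha}{6}(1+\xi)^{\alpha-3}|x|^3$, so the claim reduces to $(1+\xi)^{\alpha-3} \le 8$. Since $\xi$ lies between $x$ and $0$ with $x \ge -1/2$, we have $1 + \xi \ge 1/2$, and because $\alpha - 3 < 0$ the map $t \mapsto t^{\alpha-3}$ is decreasing on $(0,\infty)$, giving $(1+\xi)^{\alpha-3} \le (1/2)^{\alpha-3} = 2^{3-\alpha} < 2^3 = 8$, where the last strict inequality uses $\alpha > 0$. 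The degenerate case $x = 0$ is immediate, both sides equaling $1$.

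I do not expect any real obstacle here. The only points needing attention are the sign bookkeeping — that $c_\alpha > 0$, that $x^3 = |x|^3$ when $x \ge 0$ but $x^3 = -|x|^3$ when $x < 0$, and that $\alpha - 3 < 0$ — together with the observation that the constant $4/3$ in the statement is precisely $8/6$, chosen to absorb the worst-case value $2^{3-\alpha} \le 8$ of $(1+\xi)^{\alpha-3}$ as $\xi \to -1/2$; it is exactly this worst case that forces the hypothesis $x \ge -1/2$ (equivalently $|x|\le 1/2$).
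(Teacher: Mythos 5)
Your proof is correct and follows essentially the same route as the paper: a second-order Taylor expansion of $(1+x)^\alpha$ with the cubic remainder bounded via $1+\xi \ge 1/2$ and $2^{3-\alpha} \le 8$, the only differences being that you use the Lagrange form of the remainder where the paper uses the integral form, and you additionally note the remainder is nonnegative for $x \ge 0$. No gaps.
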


\begin{proof}
Using Taylor's expansion with the integral form of the remainder, we have $(1 + x)^\alpha = 1 + \alpha x + \frac{\alpha(\alpha - 1)}{2} x^2 + R_2(x)$, where the remainder term
$$
R_2(x) = \int_0^x \frac{\alpha(\alpha - 1)(\alpha - 2)}{2} (1 + t)^{\alpha - 3} (x - t)^2 \, dt.
$$
To bound $ |R_2(x)| $, observe that for $ |x| \le 1/2 $ and $ 0 < \alpha < 1 $, the integrand satisfies $(1 + t)^{\alpha - 3} \le (1 - |x|)^{\alpha - 3} \le 2^{3 - \alpha}.$ Therefore, we have
$$
|R_2(x)| \le \int_0^{|x|} \frac{\alpha(\alpha - 1)(\alpha - 2)}{2} 2^{3 - \alpha} (|x| - t)^2 \, dt \le \frac{\alpha(\alpha - 1)(\alpha - 2)}{6} 2^{3 - \alpha} |x|^3.
$$
Combining this with the Taylor expansion, we obtain the desired inequality.
\end{proof}

\begin{lemma}\label{lem:2eqs}
Let $x$ and $y$ satisfy the following equation system:
\begin{align*}
\begin{cases}
    &x^{2L-1} - \sqrt{\lambda} yx^{L-1} + \lambda x= 0, \notag \\
    &(2L -1)x^{2L-2} - \sqrt{\lambda}(L-1)yx^{L-2} + \lambda = 0. \notag
\end{cases}
\end{align*}
Then we have
$
    y = \left( ( \frac{L-2}{L} )^{{L}/{2(L-1)}} + ( \frac{L}{L-2} )^{{(L-2)}/{2(L-1)}} \right)  \lambda^{{1}/{2(L-1)}}.
$
\end{lemma}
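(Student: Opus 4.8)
The plan is to treat the two equations as giving a parametrization of the solution branch in terms of a single variable, then eliminate $x$ to express $y$ in closed form. First I would observe that $x=0$ forces $\lambda=0$ from the second equation, contradicting $\lambda=\prod_l \lambda_l>0$, so we may assume $x>0$ and divide the first equation through by $x$. This turns the system into
\begin{subequations}
\begin{align}
    & x^{2L-2} - \sqrt{\lambda}\,y\,x^{L-2} + \lambda = 0, \notag\\
    & (2L-1)x^{2L-2} - \sqrt{\lambda}(L-1)\,y\,x^{L-2} + \lambda = 0. \notag
\end{align}
\end{subequations}
Now I would introduce the shorthand $u := x^{L-2}$ and $v := x^{2L-2} = u\cdot x^{L} $, but more usefully just treat $a:=x^{2L-2}$ and $b:=\sqrt{\lambda}\,y\,x^{L-2}$ as two unknowns appearing linearly: the system reads $a - b + \lambda = 0$ and $(2L-1)a - (L-1)b + \lambda = 0$. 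Subtracting the first from the second eliminates the constant $\lambda$ and gives $(2L-2)a - (L-2)b = 0$, i.e. $b = \frac{2L-2}{L-2}\,a = \frac{2(L-1)}{L-2}\,a$. Substituting back into $a-b+\lambda=0$ yields $a\bigl(1 - \tfrac{2(L-1)}{L-2}\bigr) = -\lambda$, that is $a\cdot\frac{(L-2)-2(L-1)}{L-2} = -\lambda$, so $a\cdot\frac{-L}{L-2} = -\lambda$, giving $a = x^{2L-2} = \frac{L-2}{L}\,\lambda$.

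With $x^{2L-2}$ pinned down, I would recover $x$ as $x = \bigl(\tfrac{L-2}{L}\lambda\bigr)^{1/(2L-2)}$ and then read off $y$ from either original equation; using the first divided form $x^{2L-2} + \lambda = \sqrt{\lambda}\,y\,x^{L-2}$ gives
\[
y = \frac{x^{2L-2}+\lambda}{\sqrt{\lambda}\,x^{L-2}} = \frac{1}{\sqrt{\lambda}}\Bigl(x^{L} + \lambda x^{-(L-2)}\Bigr).
\]
Then I would substitute $x^{L} = \bigl(x^{2L-2}\bigr)^{L/(2L-2)} = \bigl(\tfrac{L-2}{L}\lambda\bigr)^{L/(2L-2)}$ and $x^{-(L-2)} = \bigl(x^{2L-2}\bigr)^{-(L-2)/(2L-2)} = \bigl(\tfrac{L-2}{L}\lambda\bigr)^{-(L-2)/(2L-2)}$, and carefully collect the powers of $\lambda$: the first term contributes $\lambda^{-1/2}\cdot\lambda^{L/(2L-2)} = \lambda^{1/(2L-2)}$ and the second contributes $\lambda^{-1/2}\cdot\lambda\cdot\lambda^{-(L-2)/(2L-2)} = \lambda^{1/(2L-2)}$ as well, so $\lambda^{1/(2(L-1))}$ factors out cleanly and the bracket becomes $\bigl(\tfrac{L-2}{L}\bigr)^{L/(2(L-1))} + \bigl(\tfrac{L-2}{L}\bigr)^{-(L-2)/(2(L-1))}$, which is exactly the claimed expression $\bigl(\tfrac{L-2}{L}\bigr)^{L/(2(L-1))} + \bigl(\tfrac{L}{L-2}\bigr)^{(L-2)/(2(L-1))}$.

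The calculation is entirely elementary, so there is no genuine ``hard part''; the only thing requiring care is the bookkeeping of fractional exponents of $\lambda$ and of $\frac{L-2}{L}$ when substituting $x$ back, and making sure the case $L=2$ (where $x^{2L-2}$ would force $x=0$) is excluded or handled separately — but since the lemma is invoked only for $L\ge 3$ in the paper, and the exponents $\frac{L}{2(L-1)}$, $\frac{L-2}{2(L-1)}$ are well-defined there, this poses no obstacle. One subtlety worth a sentence in the writeup: the elimination shows $x^{2L-2} = \frac{L-2}{L}\lambda$ is \emph{forced}, so the positive solution $x$ is unique, which is consistent with the uses of this lemma in \Cref{lem:illcase} and \Cref{lem:derprop}(v).
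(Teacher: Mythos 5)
Your proposal is correct and follows essentially the same route as the paper: eliminate one term linearly to force $x^{2L-2}=\tfrac{(L-2)\lambda}{L}$, then substitute back into the first equation to read off $y$ (the paper eliminates the $y$-term directly to get $Lx^{2L-1}-\lambda(L-2)x=0$, while you first divide by $x$ and eliminate the constant $\lambda$, which is the same elementary elimination). Your explicit justification that $x>0$ (since $x=0$ would force $\lambda=0$ in the second equation) is a small point the paper leaves implicit, but it does not change the argument.
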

\begin{proof}
Eliminating the term with \( y \) gives $L x^{2L-1} = \lambda (L-2) x$, implying
\( x = \big( \tfrac{\lambda(L-2)}{L} \big)^{1/(2L-2)}. \)
Substituting this into the first equation gives the result.
\end{proof}
\begin{lemma}
\label{coro:completelocal}
Let $y_i$ for each $i \in [d_Y]$ be defined in \eqref{eq:Y}. For $L\ge 3$ and a function
$ g(x; y_i) := (x^L - \sqrt{\lambda} y_i)^2 + \lambda Lx^2$ for all $i \in [d_Y],$ the following statements hold: \\
(i)  For each $i \in [d_Y]$, $0$ is a local minimizer of $g(x;y_i)$.\\
(ii) For each $i \in [d_Y]$ and $\overline{x}(y_i)\in \mathcal{S}_1$, we have $\overline{x}(y_i)$ is a local minimizer
of $g(x;y_i)$.
\end{lemma}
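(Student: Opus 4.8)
<br>

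The plan is to analyze the one-dimensional function $g(x;y_i) = (x^L-\sqrt{\lambda}y_i)^2 + \lambda L x^2$ for $x \ge 0$ by elementary calculus, using the derivative identity $\partial_x g(x;y_i) = 2L\bigl(x^{2L-1} - \sqrt{\lambda}y_i x^{L-1} + \lambda x\bigr) = 2L f(x;y_i)$, which ties the critical points of $g$ directly to the roots of $f(\cdot;y_i)$ already studied in \Cref{lem:derprop} and \Cref{subsec:function}. Since $g$ is smooth on $\mathbb{R}$ and even less is needed on $[0,\infty)$, the classification of local minimizers reduces to a sign analysis of $\partial_x g$ near each root of $f(\cdot;y_i)$.

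For part (i), I would compute $\partial_x g(0;y_i) = 2L f(0;y_i) = 0$, so $0$ is always a critical point. To see it is a local minimizer, note that for small $x>0$, $f(x;y_i) = x\bigl(x^{2L-2} - \sqrt{\lambda}y_i x^{L-2} + \lambda\bigr)$, and since $L \ge 3$ gives $2L-2 > L-2$ with the constant term $\lambda > 0$, the bracket tends to $\lambda > 0$ as $x \to 0^+$; hence $\partial_x g(x;y_i) = 2L f(x;y_i) > 0$ for all sufficiently small $x > 0$. Combined with $g$ being even (so the behavior for $x<0$ mirrors that for $x>0$, giving $\partial_x g < 0$ for small $x<0$), this shows $g$ has a strict local minimum at $x=0$.

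For part (ii), suppose $\bar{x}(y_i) \in \mathcal{S}_1$, i.e., $f(x;y_i)=0$ has two distinct positive roots and $\bar{x}(y_i)$ is the larger one. Then $\partial_x g(\bar{x}(y_i);y_i) = 2L f(\bar{x}(y_i);y_i) = 0$, so it is a critical point of $g$. By \Cref{lem:derprop}(ii), $\partial_x f(\bar{x}(y_i);y_i) > 0$, hence $\partial_{xx} g(\bar{x}(y_i);y_i) = 2L\,\partial_x f(\bar{x}(y_i);y_i) > 0$, so the second-derivative test immediately yields that $\bar{x}(y_i)$ is a strict local minimizer of $g(\cdot;y_i)$.

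I do not expect any serious obstacle here: the proof is a routine translation of the already-established properties of $f$ (particularly \Cref{lem:derprop}(ii) and the root-counting in \Cref{subsec:function}) into statements about $g$ via the identity $\partial_x g = 2L f$. The only mild subtlety is part (i), where the second derivative $\partial_{xx} g(0;y_i) = 2\lambda L > 0$ actually settles the matter even faster than the sign argument above — so one could simply invoke $\partial_x g(0;y_i) = 0$ and $\partial_{xx} g(0;y_i) = 2L\,\partial_x f(0;y_i) = 2\lambda L > 0$ to conclude $0$ is a strict local minimizer, making both parts uniform applications of the second-derivative test.
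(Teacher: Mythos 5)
Your proposal is correct and takes essentially the same route as the paper: both parts reduce to the second-derivative test via the identity $\partial_x g(x;y_i) = 2L f(x;y_i)$, giving $\partial_{xx} g(0;y_i) = 2L\lambda > 0$ for (i) and $\partial_{xx} g(\bar{x}(y_i);y_i) = 2L\,\partial_x f(\bar{x}(y_i);y_i) > 0$ from \Cref{lem:derprop}(ii) for (ii), which is exactly the paper's argument. The only blemish is the parenthetical claim that $g(\cdot;y_i)$ is even, which fails when $L$ is odd; it is harmless, since the second-derivative-test argument you ultimately endorse does not need it.
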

\begin{proof}
(i) One can verify that $\partial_x g(0,y_i) = 0$ and $\partial_{xx} g(0,y_i) > 0$, implying $0$ is a local minimum of $g(x; y_i)$.
(ii) Since $\partial_x g(\overline{x}(y_i); y_i) = 2L f(\overline{x}(y_i); y_i) = 0$, it follows from { \Cref{lem:derprop}(iii)} that $\overline{x}(y_i)$ is also a local minimum of $g(x; y_i)$ for all $i \in [d_Y]$.
\end{proof}
\end{document}